\documentclass[margincite]{sl2art}
\usepackage{marginfix}

\makeatletter
\def\qed@warning{}
\makeatother

\usepackage{tikz}
\usepackage{tikz-cd}
\usetikzlibrary{braids}

\usetikzlibrary{
  spath3,
  hobby,
  decorations.pathreplacing,
  decorations.markings,
  shapes.geometric,
  calc,
  intersections
}

\tikzset{
  every path/.style={
    ultra thick,
  },
  show curve controls/.style={
    postaction=decorate,
    decoration={show path construction,
      curveto code={
        \draw [blue, dashed]
        (\tikzinputsegmentfirst) -- (\tikzinputsegmentsupporta)
        node [at end, draw, solid, red, inner sep=2pt]{};
        \draw [blue, dashed]
        (\tikzinputsegmentsupportb) -- (\tikzinputsegmentlast)
        node [at start, draw, solid, red, inner sep=2pt]{}
        node [at end, fill, blue, ellipse, inner sep=2pt]{}
        ;
      }
    }
  },
  show curve endpoints/.style={
    postaction=decorate,
    decoration={show path construction,
      curveto code={
        \node [fill, blue, ellipse, inner sep=2pt] at (\tikzinputsegmentlast) {}
        ;
      }
    }
  }
}
\usepackage{todonotes}

\usepackage{graphicx}
\graphicspath{{fig/}}

\usepackage{enumitem}
\usepackage{mathtools}
\usepackage{soul}

\usepackage{tabularx}

\newlist{thmenum}{enumerate}{1}
\setlist[thmenum]{label=(\alph*)}

\RequirePackage[noabbrev,capitalize]{cleveref}
\crefname{equation}{equation}{equations}
\Crefname{equation}{Equation}{Equations}

\def\leftfun#1{\mathopen{}\left#1}
\def\rightfun#1{\right#1}

\ExplSyntaxOn
\DeclareExpandableDocumentCommand{\IfEmptyTF}{mmm}
 {
    \tl_if_empty:nTF {#1} {#2} {#3}
 }
\ExplSyntaxOff

\newcommand{\kk}{\Bbbk}
\newcommand{\kkm}{\Bbbk^{\times}}
\newcommand{\ZZ}{\mathbb{Z}}

\newcommand{\tu}{\mathbf{1}}

\NewDocumentCommand{\qdim}{O{}}{\operatorname{dim}^{#1}}

\DeclareMathOperator{\id}{id}
\DeclareMathOperator{\Id}{Id}
\DeclareMathOperator{\End}{End}
\renewcommand{\hom}{\operatorname{Hom}}
\DeclareMathOperator{\im}{im}

\DeclareMathOperator{\Inv}{Inv}
\DeclareMathOperator{\Rep}{Rep}
\NewDocumentCommand{\chr}{m}{\widehat{#1}} %

\newcommand{\set}[1]{\left\{ \def\given{\ \middle| \ }  #1 \right\}  }

\newcommand{\defeq}{\mathrel{:=}}
\newcommand{\iso}{\cong}

\NewDocumentCommand{\qn}{}{\triangleleft}

\newcommand{\mer}{\mathfrak{m}}

\DeclareMathOperator{\lk}{lk}
\NewDocumentCommand{\comp}{m}{S^3 \setminus L}
\NewDocumentCommand{\surg}{m}{S^3(#1)}

\NewDocumentCommand{\homol}{D[]{2} m }{\operatorname{H}_#1(#2)}

\NewDocumentCommand{\cohomol}{D[]{2} m }{\operatorname{H}^#1(#2)}
\NewDocumentCommand{\cocycle}{D[]{2} m }{\operatorname{Z}^#1(#2)}
\NewDocumentCommand{\cobound}{D[]{2} m }{\operatorname{B}^#1(#2)}
\NewDocumentCommand{\cochain}{D[]{2} m }{\operatorname{C}^#1(#2)}

\NewDocumentCommand{\zcohomol}{D[]{2} m }{\operatorname{zH}^#1(#2)}
\NewDocumentCommand{\zcocycle}{D[]{2} m }{\operatorname{zZ}^#1(#2)}
\NewDocumentCommand{\zcobound}{D[]{2} m }{\operatorname{zB}^#1(#2)}
\NewDocumentCommand{\zcochain}{D[]{2} m }{\operatorname{zC}^#1(#2)}

\NewDocumentCommand{\zestts}{O{\zeta}}{\overset{\scriptstyle #1}{\otimes}}
\NewDocumentCommand{\warp}{}{\mathbin{\rtimes \mskip -14mu \bigcirc}}

\NewDocumentCommand{\linkinvname}{}{\mathcal{F}}
\NewDocumentCommand{\linkinv}{m m}{\linkinvname_{#2}\mathopen{}\left(#1\right)}

\NewDocumentCommand{\rtinvname}{}{\mathcal{Z}}
\NewDocumentCommand{\rtinv}{m m}{\rtinvname_{#2}\mathopen{}\left(#1\right)}

\NewDocumentCommand{\zestinvname}{}{\mathcal{J}}

\NewDocumentCommand{\zest}{O{\zeta} m }{%
  \IfEmptyTF{#2}{%
    \mathcal{J}_{#1}%
  }{%
    \mathcal{J}_{#1}\leftfun(#2\rightfun)%
  }
}
\NewDocumentCommand{\zestmini}{O{\zeta} m }{%
  \IfEmptyTF{#2}{%
    j_{#1}%
  }{%
    j_{#1}\leftfun(#2\rightfun)%
  }
}

\NewDocumentCommand{\tangcat}{O{A}}{\mathsf{Tang}^{#1}}
\NewDocumentCommand{\du}{}{\amalg} %
\NewDocumentCommand{\delooping}{m}{\mathsf{B}{#1}} %

\NewDocumentCommand{\ev}{}{\operatorname{eval}}
\NewDocumentCommand{\coev}{}{\operatorname{coeval}}

\NewDocumentCommand{\surgcol}{m O{\mathcal{C}}}{\Omega_{#1}^{#2}}
\NewDocumentCommand{\indset}{m}{I_{#1}}
\newcommand{\globaldim}{D}

\newcommand{\parcen}[1]{\phantom{(}#1\phantom{)}}
\newcommand{\br}{to [out=90,in=-90]}
\newcommand{\onecirc}[3]{
\begin{scope}[line width=1]
\draw (0,0) node[below] {$#1$} --(0,1.5) node[above] {$#2$};
\draw[fill=white] (0,.75) circle (.5) node {$#3$};
\end{scope}
}

\declaretheorem[style=theorem]{proposition}
\declaretheorem[style=theorem,sibling=proposition]{theorem}
\declaretheorem[style=theorem,sibling=proposition]{lemma}

\declaretheorem[style=theorem,sibling=proposition]{corollary}
\declaretheorem[style=definition,sibling=proposition]{definition}
\declaretheorem[style=definition,sibling=proposition]{remark}
\declaretheorem[style=definition,sibling=proposition]{example}

\usepackage{biblatex}
\title{Zesting and the relative complexity of Reshetikhin-Turaev invariants}
\author{Colleen Delaney}
\address{Purdue University}
\email{colleend@purdue.edu}
\author{Calvin McPhail-Snyder}
\address{Duke University}
\email{calvin@sl2.site}

\begin{document}

\begin{abstract}
We show that the computational complexity of Reshetikhin-Turaev invariants of simply colored links is preserved when their underlying ribbon fusion categories are related by the zesting construction. 
Zesting modifies an \(A\)-graded ribbon fusion category \(\mathcal{C}\) with additional algebraic data \(\zeta\) to produce a new category \(\mathcal{C}^{\zeta}\) whose link invariants are known to differ from those of \(\mathcal{C}\) by an invariant of \(A\)-colored links \(\mathcal{J}_{\zeta}\) depending only on \(\zeta\). 
Building on this understanding and on earlier work on quantum braid group representations under zesting, our result suggests how zesting contributes to the organization of (2+1)D topological quantum field theories and topological phases into complexity-theoretic hierarchies.
To prove our main result we develop a local formalism analogous to the Reshetikhin-Turaev construction to compute \emph{tangle} invariants \(\mathcal{J}_{\zeta}(T)\), which leads to a polynomial time algorithm to compute invariants of links \(\mathcal{J}_{\zeta}(L)\). A byproduct of our construction is an identification (up to a sign) of the link invariants \(\mathcal{J}_{\zeta}(L)\) as rack cocycle invariants, which may be of independent interest. 
Our formalism also extends to define invariants of closed \(3\)-manifolds with \(A\)-structure and we obtain similar complexity results for homotopy quantum field theories built from \(A\)-modular fusion categories.
\end{abstract}

\subjclass{Primary 57K16, secondary 18M20, 57K12}

\maketitle
\section{Introduction}

By definition extended \((2+1)\)D TQFTs (topological quantum field theories in three spacetime dimensions) define invariants of tangles, links, and 3-manifolds.
The Reshetikhin-Turaev construction \cite{Reshetikhin1990,Reshetikhin1991,Turaev1994} provides not only a \((2+1)\)D TQFT but also an explicit, concrete algorithm to compute the associated invariants using the algebraic data of a modular fusion category.
Since the introduction of quantum invariants mathematicians have been interested in understanding their computational complexity as it relates to their strength \cite{Jaeger1990}. 
More recently this question has been studied from the perspective of topological quantum computation.
For instance, the computational complexity of Reshetikhin-Turaev invariants is closely related to the images of the quantum braid group representations.
These characterize the exchange-generated quantum gates realized by the anyons in a topological phase of matter governed by the TQFT \cite{Kirby2004, Rowell2009, Kuperberg2015}.

Recent work has shown that quantum 3-manifold invariants are generically hard to compute \cite{Delaney2025, Samperton2025, Galindo2026}.
However, the landscape of quantum link invariants is still largely unexplored.
While the complexity of certain link coloring invariants in untwisted Dijkgraaf-Witten TQFTs has been quantified \cite{KroviRussell2015, SampertonKuperberg2021}, a careful complexity analysis for a given TQFT--whether for links or \(3\)-manifolds--will depend on fine-grained details about its structure.
In the absence of a full description of the algebraic data underlying a given TQFT needed to precisely compute these invariants, it is interesting to explore its complexity \emph{relative} to other TQFTs.
In particular one might ask when two different TQFT invariants have equivalent complexity.
More generally, one might try to form hierarchies or classes of topological phases of matter much like one organizes decision or counting problems into complexity classes. 

Here we make progress on answering this question:
we show that if two ribbon fusion categories \(\mathcal{C}\) and \(\mathcal{D}\) are related by the zesting construction with respect to their universal grading groups \cite{Delaney2020} then the complexity of computing their Reshetikhin-Turaev link invariants (colored by simple objects) is the same. This can be viewed as a companion result to \cite[Theorem 5.15]{Delaney2020}, which relates the quantum braid group representations and shows that the quantum gates generated by anyons related by zesting are projectively equivalent.

Our new result is a corollary of a more general fact about zesting and Reshetikhin-Turaev invariants.
Suppose we have \defemph{zesting data} \(\zeta\) for a modular tensor (or simply ribbon) category \(\mathcal{C}\).
It was previously observed in \cite{Delaney2021} that zesting multiplies the Reshetikhin-Turaev invariant of a framed link colored by simple objects of \(\mathcal{C}\) by a factor we call the \defemph{zest invariant} \(\zest{L}\).
This invariant depends only on the link and the input data $\zeta$ to the zesting construction and not the full details of the original category \(\mathcal{C}\).

Here we extend this result to tangles and give an independent definition of \(\zest{T}\) as the image of a 2-functor from a 2-category of shadow-colored tangles.
As in the original work of \textcite{Reshetikhin1990} this functor admits a local formulation that leads to a straightforward method to compute \(\zest{T}\).
By combining several well-known facts about the efficiency of performing arithmetic in \emph{fixed} number fields over $\mathbb{Q}$ and converting link diagrams to braid closures \cite{Vogel1990}, we show that our local formula gives a polynomial-time algorithm to compute the zest invariant of a link, proving our main theorem. 
Along the way we give a characterization of the zest invariant of a link by extracting a shadow quandle cocycle \(\phi\) \cite{Carter2003,Inoue2014} from the zesting data \(\zeta\), and show that when \(\mathcal{C}\) and \(\mathcal{C}^\zeta\) are pseudo-unitary the link invariant \(\zest{L}\) is equal to the quandle cocycle invariant determined by \(\phi\). (In general it is equal up to a trivial correction factor.)

Finally, we show that the Reshetikhin-Turaev-like construction of the zest invariant of tangles extends to an invariant of closed \(3\)-manifolds with \(A\)-structures for \(A\)-modular fusion categories. We show there is an analogous factorization and polynomial-time Turing equivalence of \(A\)-manifold invariants for \(A\)-modular categories related by zesting. We leave open questions about the relative complexity of the Reshetikhin-Turaev invariants of closed \(3\)-manifolds, as well as the story for more general grading groups beyond the universal one. 

\subsection*{Layout of the paper}
In \cref{sec: background} we review the zesting construction on ribbon fusion categories and its interplay with the Reshetikhin-Turaev construction. In \cref{sec:zest invariant} we introduce a functor between the \(2\)-category of shadow-colored, (oriented, framed) tangles and the delooping of the pointed, trivially graded subcategory of \(\mathcal{C}\) where the zesting data is valued. We define the zest invariant of a shadow-colored tangle as the image of this functor and show that on links it produces the invariant that first appeared in \cite{Delaney2021}. \cref{sec:zest invariant examples} is devoted to the computation of the zest invariant of a link. We first illustrate how to compute it in practice through an example, both as a morphism and a number. Using the formalism developed in \cref{sec:zest invariant} we give a local formula to compute the zest invariant for an arbitrary link and show--up to a sign that vanishes in the pseudo-unitary setting--that it is given by a shadow-rack cocycle invariant. We explain how the local formula gives a polynomial-time algorithm to compute the zest invariant, and therefore that zesting preserves the computational complexity of the Reshetkhin-Turaev link invariants. \cref{sec:zest manifold invariant} extends this story to define the zest invariant of \(3\)-manifolds with \(A\)-structure, which are also efficient to compute. Some details are relegated to appendices; some technical lemmas involving zesting that streamline the presentation are collected in \cref{sec:ribbon zesting conditions}, and the full details that connect the zesting data with shadow-rack cocycles can be found in \cref{sec:proof of cocycle condition}.

\subsection*{Acknowledgments} 
The authors thank Eric Rowell and Eric Samperton for many helpful conversations and C\'{e}sar Galindo for help with the proof of \cref{thm:ribbon condition reformulation}.
Part of this work was completed while CMS was a visiting scholar at UNC Chapel Hill and he thanks the Mathematics Department for their hospitality.
Part of this work was supported by U.S. National Science Foundation Grant No. 2530723.

Any opinions, findings, and conclusions or recommendations expressed in this material are those of the author(s) and do not necessarily reflect the views of the National Science Foundation.
\section{Background on zesting and Reshetikhin-Turaev invariants}
\label{sec: background}
We set notation and recall some basic notions about ribbon fusion categories in \cref{sec: preliminaries} before giving an overview of the ribbon zesting construction in \cref{sec:zesting summary}. \cref{sec:rt invariants and factorization} describes the relationship between the Reshetikhin-Turaev construction and the ribbon zesting construction and lays the groundwork for \cref{sec:zest invariant}.

\subsection{Preliminaries}
\label{sec: preliminaries}
In this paper we assume that \(\mathcal{C}\) is a ribbon fusion category over an algebraically closed field \(\kk\) of characteristic \(0\).
We write \(\tu\) for its unit object, \(\alpha\) for its associator, $\beta$ for its braiding, and \(\theta\) for its twists. The quantum dimension of an object $X$ is denoted by \(\dim(X)\).
We say \(\mathcal{C}\) is \defemph{pseudo-unitary} if the quantum dimensions of simple objects are equal to their Frobenius-Perron dimensions, which implies that \(\dim(X)\ge 1\) for all \(X \in \mathcal{C}\).

Assuming \(\kk\) is algebraically closed ensures that for any endomorphism \(f : X \to X\) of a simple object \(X\) there is a scalar \(\langle f \rangle \in \kk\)  defined by
 \[
   f = \langle f \rangle \id_{X}.
\]

\subsubsection{Invertible objects}
An object $X \in \mathcal{C}$ is called \defemph{invertible} if \(X \otimes X^* \cong \tu \), where $X^*$ is the dual of $X$. The set of isomorphism classes of invertible objects forms a group under fusion, which we denote by \(\Inv(\mathcal{C})\). At times we will abuse notation and fix a set of representatives of these invertible isomorphism classes and identify the elements of \(\Inv(\mathcal{C})\) with these representatives, e.g.~thinking of the identity as \(\tu\) and not \([\tu]\). It will be clear from context which of these meanings is being invoked. The subcategory generated by invertible objects is called the \defemph{pointed subcategory} \(\mathcal{C}_{pt}\). The Frobenius-Perron dimension of an invertible $X$ is always equal to 1, and its quantum dimension $\dim(X)$ is always \(1\) or \(-1\).

\subsubsection{Gradings}
We say \(\mathcal{C}\) is \defemph{\(G\)-graded} if there is a decomposition \(\mathcal{C}=\oplus_{g \in G} \mathcal{C}_g\) such that whenever \(X \in \mathcal{C}_g\) and \(Y \in \mathcal{C}_h\), \(X \otimes Y \in \mathcal{C}_{gh}\). 
  We call an object \(X\) \defemph{homogeneous} if \(X \in \mathcal{C}_g\) for some \(g \in G\) and will either write \(X_g\) or \(|X|:=g\) to indicate the grading of a homogeneous object.

The $G$-grading is \defemph{faithful} if \(\mathcal{C}_g\) is nonempty for each $g \in G$. Any faithful grading group of \(\mathcal{C}\) is necessarily finite (since \(\mathcal{C}\) is fusion) and abelian (since it is braided.)
Every fusion category \(\mathcal{C}\) admits a grading by its \defemph{universal grading group} \(U\), which is universal in the sense that any faithful grading of \(\mathcal{C}\) factors through \(U\) \cite[Theorem 3.5]{Gelaki2008}.
Later we will denote the universal grading group of \(\mathcal{C}\) by \(A\), since for us it is abelian.

\subsubsection{Diagrammatics}
We use string diagrams to depict morphisms in \(\mathcal{C}\), and our conventions are to compose morphisms down the page and to take the tensor product of morphisms (horizontal composition) from left to right. When no orientation is given to a strand in a diagram it is understood to be oriented down the page. All pictures are assumed to be blackboard-framed unless stated otherwise. 

We draw the braiding with the following conventions for crossings.

\begin{align}
\label{eq: braiding conventions}
\beta_{X,Y}=
  \begin{tikzpicture}[line width=1, baseline=12.5, scale=.75] 
    \draw[<-] (0,0) node[below] {$Y$} \br (1,1.5) node[above] {$Y$};
    \draw[white, line width=10] (1,0) \br (0,1.5);
    \draw[<-] (1,0) node[below] {$X$} \br (0,1.5) node[above] {$X$};
    \draw (0.5,-1) node[below] {positive crossing};
  \end{tikzpicture}
  \qquad& \qquad
  \beta_{Y,X}^{-1}=
  \begin{tikzpicture}[line width=1, baseline=12.5, scale=.75] 
    \draw[<-] (1,0) node[below] {$X$} \br (0,1.5) node[above] {$X$};
    \draw[white, line width=10] (0,0) \br (1,1.5);
    \draw[<-] (0,0) node[below] {$Y$} \br (1,1.5) node[above] {$Y$};
    \draw (0.5,-1) node[below] {negative crossing};
  \end{tikzpicture} 
\end{align}
Note that this convention is opposite to the one most common in knot theory. 
For the twist isomorphisms $\theta$ and their inverses we use the following pictures.

\begin{align}
\label{eq: twist conventions}
\theta_X = 
\begin{tikzpicture}[line width=1,baseline=.75cm, scale=.75]
	\draw (0,0) node[below] {$X$}-- (0,1) to [out=90,in=180] (.25,1.25) to [out=0,in=90] (.5,1) to [out=-90,in=0] (.25,.75);
	\draw[white,line width=10] (.25,.75) to [out=180,in=-90] (0,1)--(0,2);
	\draw (.25,.75) to [out=180,in=-90] (0,1)--(0,2) node[above] {$X$};
	\end{tikzpicture}
    \qquad & \qquad
    \theta_X^{-1}=
    \begin{tikzpicture}[line width=1,baseline=-.75cm, scale=.75, yscale=-1]
	\draw (0,0) node[above] {$X$}-- (0,1) to [out=90,in=180] (.25,1.25) to [out=0,in=90] (.5,1) to [out=-90,in=0] (.25,.75);
	\draw[white,line width=10] (.25,.75) to [out=180,in=-90] (0,1)--(0,2);
	\draw (.25,.75) to [out=180,in=-90] (0,1)--(0,2) node[below] {$X$};
	\end{tikzpicture}
\end{align}
We will use all of these same conventions for tangle diagrams in the ribbon 1-category of tangles.

\subsubsection{Double braidings}
\label{sec:prelim:double braidings}

Recall that the symmetric center 
\[
\mathcal{Z}_2(\mathcal{C})= \left \{ X \in \mathcal{C} \, \lvert \, \beta_{Y,X} \circ \beta_{X,Y}=\id_{X \otimes Y} \text{ for all } Y \in \mathcal{C} \right \}
\]
is the subcategory of objects which double braid trivially with all other objects in the category. An object $X$ is called \defemph{transparent} if \(X \in \mathcal{Z}_2(\mathcal{C})\) and \(\mathcal{C}\) is called \defemph{nondegenerate} if the only object in \(\mathcal{Z}_2(\mathcal{C})\) is \(\tu\).

The double braiding allows us to relate invertible objects to tensor automorphisms of the identity functor, hence to characters on the universal grading group.
For any simple object \(X \in \mathcal{C}\) and any invertible object \(\eta \in \Inv(\mathcal{C})\) there is a morphism \(\chi_\eta(X) : X \to X\) defined implicitly by $\beta_{\eta, X} \circ \beta_{X,\eta} =: \chi_\eta(X) \otimes \id_\eta$ as in the following picture:

\begin{align*}
\label{eq:chi-definition}
 \begin{tikzpicture}[line width=1, baseline=30, scale=.75] 
    \draw (0,0) node[below] {$\parcen{X}$} \br (1,1.5) node[above] {};
    \draw[white, line width=10] (1,0) \br (0,1.5);
    \draw (1,0) node[below] {$\parcen{\eta}$} \br (0,1.5) node[above] {};
    \begin{scope}[yshift=1.5cm]
        \draw (0,0) node[below] {} \br (1,1.5) node[above] {$\parcen{\eta}$};
    \draw[white, line width=10] (1,0) \br (0,1.5);%
    \draw (1,0) node[below] {} \br (0,1.5) node[above] {$\parcen{X}$};
    \end{scope}
    \end{tikzpicture}
    \quad
    = 
    \qquad
    \begin{tikzpicture}[line width=1, baseline=30, scale=.75] 
    \draw (0,0) node[below] {$\parcen{X}$}--(0,3) node[above] {$\parcen{X}$};
    \draw[fill=white] (-.5,1) rectangle node[] {\small $  \chi_\eta$} (0.5,2); 
    \draw (1,0) node[below] {$\parcen{\eta}$}--(1,3) node[above] {$\parcen{\eta}$};
    \end{tikzpicture}
\end{align*}
\begin{proposition}
  \label{thm:chi facts}
  For \(\eta\) an invertible object of \(\mathcal{C}\),
  \begin{thmenum}
    \item Each fixed \(\chi_{\eta}\) is a monoidal natural isomorphism.
    \item \(\chi : \operatorname{Inv}(\mathcal{C}) \to \operatorname{Aut}_{\otimes}(\id_{\mathcal{C}})\) is a group homomorphism.
    \item The kernel of \(\chi\) is the subgroup of transparent invertible objects.
    \item \(\chi_{\eta^{*}}(X) = \chi_{\eta}(X)^{-1}\).
      \qedhere
  \end{thmenum}
\end{proposition}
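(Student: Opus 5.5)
The plan is to work throughout with the defining relation $\beta_{\eta,X}\circ\beta_{X,\eta}=\chi_\eta(X)\otimes\id_\eta$. First I would extend $\chi_\eta$ from simple objects to all objects: set $\chi_\eta(X)\otimes\id_\eta\defeq \beta_{\eta,X}\circ\beta_{X,\eta}$ for arbitrary $X$. This makes sense because $-\otimes\eta$ is an equivalence, so $\End(X)\to\End(X\otimes\eta)$, $f\mapsto f\otimes\id_\eta$, is a bijection. For simple $X$ this recovers the scalar definition. Naturality of $\chi_\eta$ in $X$ then follows at once from naturality of the braiding: for $f:X\to Y$ we have $(f\otimes\id_\eta)\circ\beta_{\eta,X}\beta_{X,\eta}=\beta_{\eta,Y}\beta_{Y,\eta}\circ(f\otimes\id_\eta)$, and we cancel $\otimes\id_\eta$. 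Invertibility holds because the double braiding is invertible.

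For monoidality in (a), I would apply the hexagon axioms to decompose the double braiding of $X\otimes Y$ with $\eta$ as
\[\beta_{\eta,X\otimes Y}\beta_{X\otimes Y,\eta}=(\id_X\otimes\beta_{\eta,Y}\beta_{Y,\eta})\circ(\text{conjugate of }\beta_{\eta,X}\beta_{X,\eta}\text{ by }\beta_{Y,\eta}\text{ and associators}).\]
This is the standard fact that the monodromy with a fixed object is monoidal in the other variable. Substituting $\chi_\eta(Y)\otimes\id_\eta$ for the inner double braiding on $Y$, and using naturality of $\beta$ to slide $\chi_\eta(X)$ through, gives $\chi_\eta(X\otimes Y)=\chi_\eta(X)\otimes\chi_\eta(Y)$. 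We also get $\chi_\eta(\tu)=\id$, since braidings with $\tu$ are trivial. I expect this bookkeeping, with the associators and the cancellation of $\id_\eta$ across the tensor product, to be the main obstacle. I would do it in string diagrams, where it is a single picture: the $\eta$ strand encircles the $X\otimes Y$ ribbon, and this isotopes to two separate encirclings.

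For (b), I would use the dual hexagon for the first variable: the double braiding of $X$ with $\eta\otimes\eta'$ splits as the double braiding with $\eta'$ followed by (a conjugate of) the double braiding with $\eta$. This gives $\chi_{\eta\otimes\eta'}(X)=\chi_\eta(X)\chi_{\eta'}(X)$. Since $\chi_\eta$ depends only on the isomorphism class of $\eta$ (by naturality of $\beta$ in $\eta$), the map $\chi$ is well defined on $\Inv(\mathcal{C})$. Then (d) is immediate from (b), because $\chi_{\eta^*}\chi_\eta=\chi_{\tu}=\id$.

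For (c), $\chi_\eta=\id$ means exactly that $\beta_{\eta,X}\beta_{X,\eta}=\id$ for all $X$. Using the symmetric counterpart relation $\beta_{X,\eta}\circ\beta_{\eta,X}=\id_\eta\otimes\chi_\eta(X)$, obtained by conjugating with a braiding, this is the same as $\eta\in\mathcal{Z}_2(\mathcal{C})$. Because every object is a direct sum of simples and double braiding is additive, it is enough to check this on simple $X$.
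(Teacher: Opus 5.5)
Your proof is correct, but it follows a different route from the paper's.

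\textbf{Parts (a)--(c).} The paper gives no argument of its own here. It cites \cite[Proposition 2.8]{Delaney2020}, which in turn rests on \cite[Lemma 8.22.9]{EGNO2015}. You instead prove these parts directly from the hexagon axioms and naturality of the braiding. Along the way you make explicit something the paper leaves implicit. The paper defines $\chi_\eta(X)$ only for simple $X$, yet then treats $\chi_\eta$ as a monoidal natural automorphism of $\id_{\mathcal{C}}$. You define it for every object by the same double-braiding formula, which is legitimate because $-\otimes\eta$ is fully faithful when $\eta$ is invertible.

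\textbf{Part (d).} The paper derives this from the spherical structure, or at least from sphericality of the invertible subcategory. You get it for free from (b): $\chi_\eta$ depends only on the isomorphism class of $\eta$, and $\eta^{*}\otimes\eta\cong\tu$, so
\[
\chi_{\eta^{*}}\circ\chi_{\eta}=\chi_{\eta^{*}\otimes\eta}=\chi_{\tu}=\id .
\]
This shows that (d) is a purely braided statement. It holds in any braided rigid category with no pivotal or spherical input, so the paper's appeal to sphericality is more than is needed.

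\textbf{Comparison.} The paper's citation is shorter. Your argument is self-contained and slightly more general.
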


\begin{proof}
  Parts (a--c) are  \cite[Proposition 2.8]{Delaney2020}, which is essentially \cite[Lemma 8.22.9]{EGNO2015}.
  Part (d) is straightforward to derive using the spherical structure on \(\mathcal{C}\) (it suffices to know that the subcategory of invertibles is spherical).
\end{proof}

Any automorphism \(\psi \in \operatorname{Aut}_{\otimes}(\id_{\mathcal{C}})\) is determined by the scalar-valued function
\begin{equation*}
  f_{\psi}( X)
  \defeq 
  \langle \psi(X) \rangle
\end{equation*}
on simple objects of \(\mathcal{C}\).
It turns out that if \(A\) is the \emph{universal} grading of \(\mathcal{C}\) the number \(f_{\psi}(X)\) depends only on the degree of \(X\) (which is homogeneous because it is simple).
More succinctly, \cite[Proposition 4.14.3]{EGNO2015} says the map \(\psi \mapsto f_{\psi}\) defines a canonical isomorphism
\begin{equation*}
  \operatorname{Aut}_{\otimes}(\id_{\mathcal{C}})
  \to
  \widehat{A}.
\end{equation*}
We conclude:

\begin{corollary}
  \label{thm:grading-by-characters}
  Let \(A\) be the universal grading group of \(\mathcal{C}\).
  \begin{thmenum}
  \item \(\chi\) defines a surjective homomorphism \(\operatorname{Inv}(\mathcal{C}) \to \widehat{A}\).
    Explicitly, we have 
    \begin{equation}
      \label{eq:double braiding character definition}
      \chi_{\eta}(a) = \langle \chi_{\eta}(X_a) \rangle.
    \end{equation}
    where \(X_{a}\) is any simple object of \(\mathcal{C}\) in degree \(a\).
  \item If \(\mathcal{C}\) is nondegenerate then \(\operatorname{Inv}(\mathcal{C}) \iso \widehat{A}\).
    \qedhere
  \end{thmenum}
\end{corollary}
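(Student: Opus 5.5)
The plan is to compose the homomorphism \(\chi\) of \cref{thm:chi facts} with the canonical isomorphism \(\operatorname{Aut}_{\otimes}(\id_{\mathcal{C}}) \to \widehat{A}\), \(\psi \mapsto f_\psi\), of \cite[Proposition 4.14.3]{EGNO2015}. By \cref{thm:chi facts}(a,b), \(\eta \mapsto \chi_\eta\) is a group homomorphism \(\operatorname{Inv}(\mathcal{C}) \to \operatorname{Aut}_{\otimes}(\id_{\mathcal{C}})\). Composing gives a homomorphism \(\operatorname{Inv}(\mathcal{C}) \to \widehat{A}\) sending \(\eta\) to \(f_{\chi_\eta}\). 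By definition \(f_{\chi_\eta}(X) = \langle \chi_\eta(X)\rangle\), and since \(f_{\chi_\eta}\) depends only on the degree of \(X\), we get \(\chi_\eta(a) = \langle \chi_\eta(X_a)\rangle\) for any simple \(X_a\) of degree \(a\). This is \eqref{eq:double braiding character definition}, and it also shows the character is well defined.

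The only substantive point is surjectivity in part (a). I expect this to be the main obstacle, since it is not formal from \cref{thm:chi facts} and needs an input about braided fusion categories. I would try the following argument, which is the content of \cite[Proposition 2.8]{Delaney2020} and \cite[\S8.22]{EGNO2015}. Let \(\mathcal{C}_{ad}\) be the adjoint subcategory, i.e.\ the trivial component of the universal grading. Its centralizer satisfies \(\mathcal{C}_{ad}' \supseteq \mathcal{C}_{pt}\) in the relevant sense. More precisely, the invertible objects centralizing \(\mathcal{C}_{ad}\) are exactly those whose double braiding is a tensor automorphism, and every tensor automorphism of \(\id_{\mathcal{C}}\) is trivial on \(\mathcal{C}_{ad}\).

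If the citation to EGNO's Lemma 8.22.9 does not directly give surjectivity for degenerate \(\mathcal{C}\), I would argue via the centre. Embed \(\mathcal{C}\) into the nondegenerate category \(\mathcal{Z}(\mathcal{C})\) and realize each character of \(A\) by an invertible object. This would require checking that the realizing object lies in \(\mathcal{C}\), and I am less sure of this step. It is where I would expect to spend effort, or else simply cite the surjectivity statement of \cite[Proposition 2.8]{Delaney2020}.

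For part (b), assume \(\mathcal{C}\) is nondegenerate. By \cref{thm:chi facts}(c), the kernel of \(\chi\) consists of transparent invertible objects. Nondegeneracy forces any such object to be \(\tu\), so \(\chi\) is injective. Combined with surjectivity from (a), this gives \(\operatorname{Inv}(\mathcal{C}) \iso \widehat{A}\). Alternatively, in the nondegenerate case \(\mathcal{C}_{pt} = \mathcal{C}_{ad}'\), and so \(|\operatorname{Inv}(\mathcal{C})| = \dim \mathcal{C}/\dim\mathcal{C}_{ad} = |A| = |\widehat{A}|\). This counting argument gives surjectivity independently in this case.
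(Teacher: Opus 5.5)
Your construction of the homomorphism and of \eqref{eq:double braiding character definition} is exactly the paper's argument. You compose \(\eta\mapsto\chi_\eta\) from \cref{thm:chi facts} with the isomorphism \(\psi\mapsto f_\psi\) of \cite[Proposition 4.14.3]{EGNO2015}.

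The gap is the surjectivity claim in (a). You flag it but do not prove it, and it cannot be proved as stated because it is false without nondegeneracy. Take \(\mathcal{C}=\Rep(\ZZ/2\ZZ)\) with its symmetric braiding and trivial twist, which is a ribbon fusion category. Both simples, \(\tu\) and the sign representation \(\sigma\), are invertible. So \(\mathcal{C}_{ad}\) is trivial and \(A\iso\widehat{A}\iso\ZZ/2\ZZ\). But every double braiding is the identity, so \(\chi\colon\operatorname{Inv}(\mathcal{C})\to\widehat{A}\) is the trivial map.

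In this example \(|\operatorname{Inv}(\mathcal{C})|=|A|\); what fails is injectivity. The object \(\sigma\) is transparent, so it lies in \(\ker\chi\) by \cref{thm:chi facts}(c). Your route through the centre breaks at exactly the step you were unsure about. Here \(\mathcal{Z}(\mathcal{C})\) is the toric code, with \(\mathcal{C}\) sitting inside as the charges \(\{\tu,\sigma\}\). The character that is \(-1\) on \(\sigma\) is realized by double braiding only with the flux and the fermion, and neither lies in \(\mathcal{C}\). The paper's own justification is also just the composition with \cite[Proposition 4.14.3]{EGNO2015}, which produces a homomorphism and nothing more. So no citation will supply surjectivity in general.

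What your argument does establish is the correct version. For every \(\mathcal{C}\), \(\chi\) is a homomorphism \(\operatorname{Inv}(\mathcal{C})\to\widehat{A}\) given by \eqref{eq:double braiding character definition}, and it is bijective when \(\mathcal{C}\) is nondegenerate. For (b), drop the appeal to surjectivity from (a) and keep your counting argument:
\begin{itemize}
\item \(\ker\chi\) is trivial by \cref{thm:chi facts}(c) and nondegeneracy.
\item \(\mathcal{C}_{pt}=\mathcal{C}_{ad}'\) together with the double-centralizer formula gives \(|\operatorname{Inv}(\mathcal{C})|=\dim\mathcal{C}_{ad}'=\dim\mathcal{C}/\dim\mathcal{C}_{ad}=|A|=|\widehat{A}|\).
\item Hence the injective map \(\chi\) is an isomorphism.
\end{itemize}
That is the right place for surjectivity: it should be removed from (a), or made conditional on nondegeneracy.
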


\begin{lemma}
  \label{thm:chi lemma}
  Take \(\mathcal{C}\) to be graded by \(A\) and let \(X\) be a homogeneous object with \(|X| = a\).
  Then
  \(
  \beta_{X, \eta}
  =
  \chi_{\eta}(a)
  \beta_{\eta, X}^{-1}
  .
  \)
\end{lemma}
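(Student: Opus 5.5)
The plan is to start from the defining relation of \(\chi_\eta\) and move one braiding to the other side. By definition
\[
\beta_{\eta,X}\circ\beta_{X,\eta} = \chi_\eta(X)\otimes \id_\eta ,
\]
as morphisms \(X\otimes\eta \to X\otimes\eta\). Composing on the left with \(\beta_{\eta,X}^{-1}\colon \eta\otimes X\to X\otimes \eta\) gives
\[
\beta_{X,\eta} = \beta_{\eta,X}^{-1}\circ(\chi_\eta(X)\otimes\id_\eta).
\]
So the whole task is to show that \(\chi_\eta(X)\) is the scalar \(\chi_\eta(a)\) times \(\id_X\) whenever \(X\) is homogeneous of degree \(a\). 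The scalar then pulls out, and we get \(\beta_{X,\eta}=\chi_\eta(a)\,\beta_{\eta,X}^{-1}\).

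First, the definition of \(\chi_\eta(X)\) as stated was for simple \(X\). I would extend it to arbitrary \(X\) by the same implicit formula. This is legitimate because \(\eta\) is invertible, so \(-\otimes\id_\eta\) is an equivalence and is in particular full and faithful. The extension is natural in \(X\) by naturality of the braiding, and it agrees with the component of the monoidal natural automorphism from \cref{thm:chi facts}(a).

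Next, decompose the homogeneous object \(X\) (by semisimplicity) as a direct sum of simple objects \(X_i\), all lying in \(\mathcal{C}_a\). By naturality, \(\chi_\eta(X)\) acts on each summand \(X_i\) as \(\chi_\eta(X_i)=\langle\chi_\eta(X_i)\rangle\id_{X_i}\). By \cref{thm:grading-by-characters} and \eqref{eq:double braiding character definition}, this scalar equals \(\chi_\eta(a)\) for every simple object of degree \(a\). Hence \(\chi_\eta(X)=\chi_\eta(a)\id_X\).

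I expect the main obstacle to be a grading subtlety. \cref{thm:grading-by-characters} is stated for the universal grading, while the lemma allows an arbitrary \(A\)-grading. If \(A\) is a general grading group, the fact that the value depends only on the degree needs the grading to be a quotient of the universal one, so that \(\chi_\eta\) descends to \(A\). I would handle this by taking \(A\) to be the universal grading, as the paper does throughout. In that case the character \(\chi_\eta(a)\) is well defined, and the remaining argument is purely formal.
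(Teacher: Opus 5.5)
Your argument is correct and follows the paper's route. The paper says only that the lemma is obvious from the defining relation \(\beta_{\eta,X}\circ\beta_{X,\eta}=\chi_\eta(X)\otimes\id_\eta\) together with \cref{thm:grading-by-characters}, and you have supplied the omitted details (extending \(\chi_\eta\) to non-simple homogeneous \(X\) via full faithfulness of \(-\otimes\eta\), naturality, and semisimplicity). Your observation that \(A\) must be the universal grading for \(\chi_\eta(a)\) to be well defined also matches the paper's standing convention.
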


\begin{proof}
  Obvious from the definition of \(\chi\) and \cref{thm:grading-by-characters}.
\end{proof}

\subsection{Zesting and zesting data}
\label{sec:zesting summary}
\begin{definition} 
  \label{def:zesting-conditions}
Let $\mathcal{C}=\bigoplus_{a \in A} \mathcal{C}_a$ be a ribbon fusion category with universal grading group \(A\). The data of a ribbon zesting $\zeta =(\lambda,\nu,t, \epsilon)$ consists of 

\begin{enumerate}[label=(\alph*)]
  \item 
    A function $\lambda : A \times A \to \Inv(\mathcal{C}_e)$ with $\lambda(e,a)=\lambda(a,e)=\tu$ for all $a \in A$, which is a  \(2\)-cocycle%
    \note{%
      When there are no transparent objects (say, when \(\mathcal{C}\) is modular) \(A\) and  \(\chr{\Inv(\mathcal{C})}\) are isomorphic.
      Then the claim that \(\lambda\) is a \(2\)-cocycle is equivalent to the equations \(\omega(a,b) \omega(ab,c) = \omega(b,c) \omega(a,bc)\) in \(\chr{\Inv(\mathcal{C})}\).
    }
    in the sense that for each \(a, b, c \in A\) there exist isomorphisms
    \[
      \lambda(a,b) \otimes \lambda(ab,c) \to \lambda(b,c) \otimes \lambda(a,bc).
    \]
    We write
    \[
      \omega(a,b;c) \defeq \chi_{\lambda(a,b)}(c) \in \kkm.
    \]
    for the values of the character determined by \(\lambda\) via \cref{thm:grading-by-characters}.
  \item A family of isomorphisms
    $$
    \nu(a,b,c): \lambda(a,b) \otimes \lambda(ab,c) \longrightarrow \lambda(b,c) \otimes \lambda(a,bc)
    $$ 
    satisfying the \defemph{associative zesting condition}%
    \begin{align}
      \label{eq:associative zesting}
      \begin{split}
    \left ( \nu(b,c,d) \otimes \id_{\lambda(a,bcd)} \right ) \circ \left ( \id_{\lambda(b,c)}\otimes \nu(a,bc,d) \right ) \circ \left ( \nu(a,b,c) \otimes \id_{\lambda(abc,d)} \right ) \\ =  \left (\id_{\lambda(a,b)} \otimes \nu(a,b,cd) \right ) \circ \left ( \beta^{-1}_{\lambda(a,b),\lambda(c,d)} \otimes \id_{\lambda(ab,cd)} \right ) \circ  \left (\id_{\lambda(c,d)} \nu(ab,c,d) \right )
    \end{split}
    \end{align}
    for all $a,b,c,d \in A$, or graphically
    
    \begin{align}
    \label{fig:associative zesting condition}
    \scalebox{.9}{
    \begin{tikzpicture}[line width=1,scale=1,baseline=1.9cm]
    \foreach \x in {0,1,2}
    {
    \draw(\x,0)--(\x,4);
    }
    \draw[fill=white] (-.25,.75) rectangle node {\small $\nu(b,c,d)$} (1.25,1.25) ;
    \draw[fill=white] (.75,1.75) rectangle node {\small $\nu(a,bc,d)$} (2.25,2.25);
    \draw[fill=white] (-.25,2.75) rectangle node {\small $\nu(a,b,c)$} (1.25,3.25) ;
    \draw (0,0) node[below] {\small $\lambda(c,d)$};
    \draw (1.06,0) node[below] {\small $\lambda(b,cd)$};
    \draw (2.25,0) node[below] {\small$ \lambda(a,bcd)$};
    \draw (0,4) node[above] {\small $\lambda(a,b)$};
    \draw (1.06,4) node[above] {\small $\lambda(ab,c)$};
    \draw (2.25,4) node[above] {\small $\lambda(abc,d)$};
    \end{tikzpicture}}
    =
    \scalebox{.9}{\begin{tikzpicture}[line width=1,scale=1, baseline=1.9cm]
    \foreach \x in {0,1}
    {
    \draw(\x,0)--(\x,1);
    \draw(\x,2.5)--(\x,4);
    }
   \draw(2,0)--(2,4);
   \begin{scope}[xshift=0cm,yshift=1cm]
   \draw (1,0)  \br (0,1.5);
   \draw[white, line width=10] (0,0) \br (1,1.5);
   \draw (0,0) \br (1,1.5);
   \end{scope}
   \draw[fill=white] (.75,.75) rectangle node {\small $\nu(a,b,cd)$} (2.25,1.25);
   \draw[fill=white] (.75,2.75) rectangle node {\small $\nu(ab,c,d)$} (2.25,3.25);
   \draw (0,0) node[below] {\small $\lambda(c,d)$};
   \draw (1.06,0) node[below] {\small $\lambda(b,cd)$};
   \draw (2.25,0) node[below] {\small$ \lambda(a,bcd)$};
   \draw (0,4) node[above] {\small $\lambda(a,b)$};
   \draw (1.06,4) node[above] {\small $\lambda(ab,c)$};
   \draw (2.25,4) node[above] {\small $\lambda(abc,d)$};
  \end{tikzpicture}}.
  \end{align}
    
    We impose the normalization conditions
    \[
      \nu(e,b,c) = \id_{\lambda(b,c)}
      ,
      \nu(a,e,c) = \id_{\lambda(a,c)}
      \text{, and }
      \nu(a,b,e) = \id_{\lambda(a,b)},
    \]
    which are consistent with the normalization \(\lambda(a,e) = \lambda(e,a) = \tu\).
  \item A family of isomorphisms $t(a,b) : \lambda(a,b) \to \lambda(b,a)$ making $\lambda$ into a symmetric 2-cocycle, normalized so that $t(e,a)=t(a,e)=\tu$ satisfying the \defemph{braided zesting conditions}
    \begin{align}
      \label{eq:braided zesting I}
      \begin{split}
     \nu(b,c,a) \circ \left ( \id_{\lambda(b,c)} \otimes t(a,bc) \right ) \circ  \nu (a,b,c)\\ = \left (t(a,c) \otimes \id_{\lambda(ca,b)} \right ) \circ \nu(b,a,c) \circ \left ( t(a,b) \otimes \id_{\lambda(ab,c)} \right ) 
     \end{split}
    \end{align}
    and
    \begin{align}
    \begin{split}
      \label{eq:braided zesting II}
      \omega(a,b;c) \left ( \nu(c,a,b)^{-1} \circ \left (\id_{\lambda(a,b)} \otimes t(ab,c) \right ) \circ  \nu(a,b,c)^{-1} \right )\\
      = \left (t(a,c) \otimes \id_{\lambda(ca,b)} \right ) \circ \nu(a,c,b)^{-1} \circ \left ( t(b,c) \otimes \id_{\lambda(a,bc)} \right ) 
      \end{split}
    \end{align}
    for all $a,b,c \in A$, or graphically
    \begin{align}
    \label{fig:braided zesting conditions}
    \scalebox{.9}{\begin{tikzpicture}[line width=1,baseline=1.9cm]
    \draw (0,0) node[below] {\small $\lambda(c,a)$}--(0,4) node[above] {\small $\lambda(a,b)$};
    \draw (1,0) node[below] {\small $\lambda(ca,b)$}--(1,4) node[above] {\small $\lambda(ab,c)$};
    \draw[fill=white] (-.25, .415) rectangle node [] {\small $\nu(b,c,a)$}(1.25,.915);
    \begin{scope}[yshift=2.66cm]
    \draw[fill=white] (-.25, .415) rectangle node [] {\small $\nu(a,b,c)$}(1.25,.915);
    \end{scope}
    \begin{scope}[xshift=1cm, yshift=1.33cm]
    \draw[fill=white] (0,.65) circle (.5) node [] {\scriptsize $t(a,bc)$};
    \end{scope}
    \end{tikzpicture}
    \quad = \quad
    \begin{tikzpicture}[line width=1,baseline=1.9cm]
    \draw (0,0) node[below] {\small $\lambda(c,a)$}--(0,4) node[above] {\small $\lambda(a,b)$};
    \draw (1,0) node[below] {\small $\lambda(ca,b)$}--(1,4) node[above] {\small $\lambda(ab,c)$};
    \draw[fill=white] (0,.65) circle (.45) node [] {\scriptsize $t(a,c)$};
    \begin{scope}[yshift=2.66cm]
    \draw[fill=white] (0,.65) circle (.45) node [] {\scriptsize $t(a,b)$};
    \end{scope}
    \begin{scope}[yshift=1.33cm]
    \draw[fill=white] (-.25, .415) rectangle node [] {\small $\nu(b,a,c)$}(1.25,.915);
    \end{scope}
    \end{tikzpicture}}
    \quad
    \text{and}
    \quad 
    \omega(a,b;c)\cdot
    \scalebox{.9}{\begin{tikzpicture}[line width=1,baseline=1.9cm]
    \draw (0,0) node[below] {\small $\lambda(c,a)$}--(0,4) node[above] {\small $\lambda(b,c)$};
    \draw (1,0) node[below] {\small $\lambda(ca,b)$}--(1,4) node[above] {\small $\lambda(a,bc)$};
    \draw[fill=white] (-.3, .415) rectangle node [] {\small $\nu(c,a,b)^{-1}$}(1.3,.915);
    \begin{scope}[yshift=2.66cm]
    \draw[fill=white] (-.3, .415) rectangle node [] {\small $\nu(a,b,c)^{-1}$}(1.3,.915);
    \end{scope}
    \begin{scope}[xshift=1cm, yshift=1.33cm]
    \draw[fill=white] (0,.65) circle (.5) node [] {\scriptsize $t(ab,c)$};
    \end{scope}
    \end{tikzpicture}
    \quad = \quad
    \begin{tikzpicture}[line width=1,baseline=1.9cm]
    \draw (0,0) node[below] {\small $\lambda(c,a)$}--(0,4) node[above] {\small $\lambda(b,c)$};
    \draw (1,0) node[below] {\small $\lambda(ca,b)$}--(1,4) node[above] {\small $\lambda(a,bc)$};
    \draw[fill=white] (0,.65) circle (.45) node [] {\scriptsize $t(a,c)$};
    \begin{scope}[yshift=2.66cm]
    \draw[fill=white] (0,.65) circle (.45) node [] {\scriptsize $t(b,c)$};
    \end{scope}
    \begin{scope}[yshift=1.33cm]
    \draw[fill=white] (-.3, .415) rectangle node [] {\small $\nu(a,c,b)^{-1}$}(1.3,.915);
    \end{scope}
    \end{tikzpicture}
    }.
    \end{align}
    These correspond to the condition denoted (BZ2) in \cite{Delaney2020}.
    In the more general context of \cite{Delaney2020} there is additional data \(j\) satisfying an additional condition (BZ1).
    For the universal grading group both can be dropped; see \cref{rem:dropping j}.

  \item A function \(\epsilon : A \to \kkm\) normalized so that \(\epsilon(e) = 1\) satisfying the  \defemph{twist zesting condition}
    \begin{equation}
      \label{eq:twist zesting epsilon}
      \frac{
        \epsilon(ab)
        }{
        \epsilon(a) \epsilon(b)
      }
      =
      \dim (\lambda(a,b))
    \end{equation}
    and \defemph{ribbon zesting condition}
    \begin{equation}
      \label{eq:ribbon zesting epsilon}
      \epsilon(a)^2 = 1
    \end{equation}
    for all \(a, b \in A\).
    \note{%
    In previous work the ribbon zesting parameter was stated in terms of the function \(f(a) = \epsilon(a) \langle t(a,a) \rangle \); we discuss this in \cref{sec:ribbon zesting conditions} and explain why \eqref{eq:twist zesting epsilon} always admits solutions.
    In many cases of interest (for example, any pseudo-unitary category) \(\dim (\lambda(a,b)) = 1\) uniformly, in which case the trivial solution \(\epsilon(a) = 1\) is the only choice that will result again in a pseudo-unitary category; we will revisit this fact throughout \cref{sec: background,sec:zest invariant}.
    }
\end{enumerate}
\end{definition}

The main result of \cite{Delaney2020} is the construction of a new category \(\mathcal{C}^{\zeta}\) from \(\mathcal{C}\) and \(\zeta\).
The new category \(\mathcal{C}^{\zeta}\) has the same objects as \(\mathcal{C}\) but its tensor product, braiding, and ribbon structure are all modified by \(\zeta\).
In particular, the tensor product of \(\mathcal{C}^{\zeta}\) is no longer strict, even if the original tensor product of \(\mathcal{C}\) is.

Here we use string diagrams to give a brief description of the structure isomorphisms of \(\mathcal{C}^{\zeta}\) when $\mathcal{C}$ is assumed to be strict.
Throughout \(X_a, Y_b, Z_c\) are homogeneous objects of degree \(a, b, c\), respectively.

\subsubsection{Zested monoidal structure and duals} 
\label{sec: zested monoidal and duals}
The zested tensor product is
\[
  X_{a} \zestts Y_{b} \defeq X_{a} \otimes Y_{b} \otimes \lambda(a,b)
\]
This is no longer strict, and the associator
\[
  (X_{a} \zestts Y_{b}) \zestts Z_{c}
  \overset{\alpha^{\zeta}}{\longrightarrow}
  X_{a} \zestts (Y_{b} \zestts Z_{c})
\]
is given by
\begin{align*}
  \alpha^{\zeta}_{X_a,Y_b,Z_c} = 
  \begin{tikzpicture}[line width=1, baseline = 1.125*.75cm, scale=.75, slblue]
    \draw[black] (1,0)--(1,2.25);
    \draw (4,0)--(4,2.25);
    \draw[black] (2,0)--(2,1.5);
    \draw (3,0)--(3,1.5);
    \draw (4,0)--(4,1.5);
    \draw[fill=white] (2.75,0 + .45) rectangle node {$\scriptstyle \nu(a,b,c)$} (4.25,0 + 1.05); 
    \draw (3,1.5) \br (2,2.25);
    \draw[white, line width= 10] (2,1.5) \br (3,2.25);
    \draw[black] (2,1.5) \br (3,2.25);
    \draw[black] (0,0) node[below] {$\parcen{X_a}$} -- (0,2.25) node[above] {$\parcen{X_a}$};
    \draw[black] (1,0) node[below] {$\parcen{Y_b}$};
    \draw[black] (2,0) node[below] {$Z_c$};
    \draw (3,0) node[below] {$\lambda(b,c)$};
    \draw (4.5,0) node[below] {$\lambda(a,bc)$};
    \draw[black] (1,2.25) node[above] {$\parcen{Y_b}$};
    \draw (2,2.25) node[above] {$\lambda(a,b)$};
    \draw[black] (3,2.25) node[above] {$\parcen{Z_c}$};
    \draw (4.25,2.25) node[above] {$\lambda(ab,c)$};
  \end{tikzpicture}
\end{align*}
Here we draw the extra morphisms added during zesting in \textcolor{slblue}{blue} for emphasis.

To express how the duality changes, it will be convenient to abbreviate
\begin{align}
    \lambda(a)
    &\defeq
    \lambda(a, a^{-1})
    \\
    \nu(a)
    &\defeq
    \nu(a, a^{-1}, a)
\end{align}

Since \(\lambda(a,e) = \tu\) we can view \(\nu(a)\) as an isomorphism \(\lambda(a) \to \lambda(a^{-1})\).
Using this notation the (left) dual in the zested category \(\mathcal{C}^{\zeta}\) is
\begin{equation}
  \label{eq:zested dual}
  \overline{X}_{a} = X_a^* \otimes \lambda(a)^*
\end{equation}
with (left) evaluation and coevaluation morphisms 
\begin{align*}
\coev^{\zeta}_{X}= \frac{1}{\dim(\lambda(a))}
    \begin{tikzpicture}[line width=1, scale=1.4, baseline=-.75*1.25cm]
    \draw[
    looseness=2] (1,-1) node[below] {$X_a$} to [out=90, in=90] (2,-1);
    \draw (2,-1) node[below] {$X_a^*$};
    \begin{scope}[xshift=1.75cm,slblue]
    \draw[
    looseness=2] (1,-1) node[below] {$\lambda(a)^*$} to [out=90, in=90] (2,-1);
    \draw (2,-1) node[below] {$\lambda(a)$};
    \end{scope}
    \end{tikzpicture} \quad & \quad  
    \ev^{\zeta}_{X}=
	\begin{tikzpicture}[line width=1, scale=1.4, baseline=-.25*1.25cm]
	\begin{scope}[xshift=.5cm,slblue]
   	 \draw[
   	 looseness=2] (1,0) node [above] {$\lambda(a)^*$} to [out=-90, in=-90] (2,0);
   	 \draw (2.125,0)  node[above] {$\lambda(a^{-1})$};
     	\draw[fill=white] (1.65,-.55) rectangle node {$\scriptstyle \nu(a)^{-1}$} (2.3,-.15);
 	\end{scope}
	\draw[line width=10,white, looseness=2] (1,0) to [out=-90, in=-90] (2,0); 
   	\draw[
   	looseness=2] (1,0) node[above] {$\parcen{X_a^*}$} to [out=-90, in=-90] (2,0) node[above] {$\parcen{X_a}$};
   	\end{tikzpicture} 
\end{align*}
The normalization factor of $\dim(\lambda(a)) = \dim(\lambda(a))^{-1}$ is required to ensure that the zig-zag axioms hold, i.e.~so that $\mathcal{C}^\zeta$ is (left) rigid. We note that this factor is \(1\) when \(\mathcal{C}\) is pseudo-unitary.

We will use the right duals and right evaluation and coevaluation morphisms induced by the zested pivotal structure.
Following \cite[Proposition 5.5]{Delaney2020}, we take this structure to be induced by the zested ribbon structure via the Drinfeld isomorphism. Although the zested ribbon structure is deferred to \cref{sec: zested braiding and twists}, and the definition of the zested Drinfeld isomorphism is given in \cite{Delaney2020}, when $\mathcal{C}$ is strict pivotal one can check that the induced pivotal structure \(\varphi_{X}^{\zeta} \colon X \to \overline{\overline{X}}\) is given by
\[
  \varphi_{X}^\zeta := 
  \frac{\epsilon(a)}{\dim(\lambda(a))}\quad 
  \begin{tikzpicture}[line width=1, scale=2, baseline= -.5cm*2]
    \begin{scope}[slblue]
      \draw[
      looseness=2] (1,-1) node [below] {$\lambda(a)$} to [out=90, in=90] (2,-1);
      \draw (2.125,-1) node[below] {$\lambda(a^{-1})$};
      \draw[fill=white]  (.8,-.5) rectangle node {$\scriptstyle \nu(a)^{-1}$}  (1.35,-.8);
    \end{scope}
    \draw[white, line width=10] (1.5,0)--(1.5,-1);
    \draw[%
    ] (1.5,0) node[above] {$X_a$} --(1.5,-1) node[below] {$X_a$} ;
  \end{tikzpicture}.
\]

This zested pivotal structure induces right evaluations and coevaluations
\begin{align*}
  (\coev'_{X})^{\zeta} 
  & =
  \left( \id_{\overline{X}} \zestts \left( \varphi^{\zeta}_{X}\right )^{-1} \right) \circ \coev_{\overline{X}}^{\zeta}
  \\
  (\ev'_{X})^{\zeta} 
  & =
  \ev_{\overline{X}}^{\zeta} \circ \left ( \varphi^{\zeta}_{X} \zestts \id_{\overline{X}} \right)
\end{align*}
or graphically 
\begin{align*}
  (\coev'_X)^{\zeta} &= \frac{1}{\epsilon(a) \dim(\lambda(a))}
  \begin{tikzpicture}[line width=1, scale=1.4, baseline=-.75*1.25cm]
    \begin{scope}[xshift=.5cm,slblue]
      \draw[%
      looseness=2] (1,-1) node [below] {$\lambda(a)^*$} to [out=90, in=90] (2,-1);
      \draw[fill=white] (1.75,-.8) rectangle node {$\scriptstyle \nu(a)$} (2.25,-.4);
      \draw (2.125,-1) node[below] {$\lambda(a^{-1})$};
    \end{scope}
    \draw[line width=10,white, looseness=2] (1,-1) to [out=90, in=90] (2,-1);
    \draw (2,-1) node[below] {$\parcen{X_a}$};
    \draw[%
    looseness=2] (1,-1) to [out=90, in=90] (2,-1);
    \draw (.875,-1) node[below] {$X_a^*$};
    \draw (2,-1) node[below] {$\parcen{X_a}$};
  \end{tikzpicture}
  \\
  (\ev'_X)^{\zeta} &= \epsilon(a)
  \begin{tikzpicture}[line width=1, scale=1.4, baseline=-.25*1.25cm]
    \draw[%
    looseness=2] (1,0) node[above] {$X_a$} to [out=-90, in=-90] (2,0);
    \draw (2,0) node[above] {$X_a^*$};
    \begin{scope}[xshift=1.75cm,slblue]
      \draw[%
      looseness=2] (1,0) node[above] {$\lambda(a)^*$} to [out=-90, in=-90] (2,0);
      \draw (2,0) node[above] {$\lambda(a)$};
    \end{scope}
  \end{tikzpicture}.
\end{align*}

The benefit of working with the cups and caps induced by the pivotal structure is that they can then be paired with the original evaluation/coevaluation morphisms to compute the zested trace.
For an endomorphism \(f \in \operatorname{Hom}(X_a,X_a)\) of a homogeneous object we have
\begin{align}
\label{eq: left zested trace}
\text{Tr}_L^\zeta(f) = \ev^\zeta_X \circ \left ( \id_{\overline{X}} \overset{\zeta}{\otimes} f \right ) \circ (\coev'_X)^{\zeta} =  \frac{\dim(\lambda(a^*))}{\epsilon(a)} \text{Tr}_L(f)
\end{align}
and 
\begin{align}
\label{eq: right zested trace}
\text{Tr}_R^\zeta(f) = (\ev'_X)^\zeta \circ \left ( f   \overset{\zeta}{\otimes} \id_{\overline{X}} \right ) \circ (\coev_X)^{\zeta} =  \frac{\epsilon(a)}{\dim(\lambda(a))} \text{Tr}_R(f)
\end{align}
Since \(\dim(\lambda(a))=\dim(\lambda(a)^*)\) the ribbon zesting condition \(\epsilon(a)^2=1\) confirms that the zested left and right traces are equal; this must be the case because \(\mathcal{C}^\zeta\) is ribbon, hence spherical.

\subsubsection{Zested braiding and ribbon isomorphisms}
\label{sec: zested braiding and twists}
The braiding 
\[
  X_a \zestts Y_b \overset{\beta^{\zeta}} \longrightarrow Y_b \zestts X_a
\]
    is given by
\begin{align*}
\beta^{\zeta}_{X_a,Y_b} =
 \begin{tikzpicture}[line width=1, baseline=12.5, scale=.75] 
    \draw (0,0) node[below] {$\parcen{Y}$} \br (1,1.5) node[above] {$\parcen{Y}$};
    \draw[white, line width=10] (1,0) \br (0,1.5);
    \draw (1,0) node[below] {$\parcen{X}$} \br (0,1.5) node[above] {$\parcen{X}$};
    \begin{scope}[xshift=2.25cm,slblue]
    \onecirc{\lambda(b,a)}{\lambda(a,b)}{\scriptstyle t(a,b)};
    \end{scope}
    \end{tikzpicture}
\end{align*}
and the twists are defined by
\[
  \theta_{X_a}^{\mathcal{C}^\zeta}
  \defeq
  \theta_{X_a}^{\mathcal{C}}
 \textcolor{slblue}{ t(a) \epsilon(a)}
\]
where \(t(a) := \langle t(a,a) \rangle \in \kkm\).

\begin{remark}
  \label{rem:dropping j}
  The paper \cite{Delaney2020} defines braided zestings for arbitrary faithful gradings \(G\) of \(\mathcal{C}\), not just the universal grading \(A\).
  At that level of generality braided zesting requires the additional data of a map \(j : G \to \operatorname{Aut}_{\otimes}(\Id_{\mathcal{C}})\), which amounts to the data of isomorphisms $j_a: Y \to Y$ on simple objects $Y$ for all $a \in G$.   
  (This \(j\) is unrelated to \(\zestmini{}\).)
  It has to satisfy a compatibility condition (BZ1) with respect to the double braiding.
  The more general definition of \(\beta^{\zeta}_{X_{a}, Y_{b}}\) includes an adjustment by the induced isomorphism \(j_{a}\) on \(Y\).
  For the \emph{universal} grading \(A\) the value \(j_{a}(Y)\) on a simple object \(Y\) depends only on the \(A\)-degree of \(Y\), and in fact each \(j_{a}\) can be identified with a homomorphism \(A \to \kkm\) by \cite[Proposition 4.14.3]{EGNO2015}.
  This means that (BZ1) is automatically satisfied and by \cite[Corollary 4.8]{Delaney2020} one can assume without loss of generality that \(j\) is trivial.
\end{remark}

\subsection{Zesting and the Reshetikhin-Turaev construction}
\label{sec:rt invariants and factorization}

Next we develop a reformulation of the zesting construction from the point of view of the Reshetkhin-Turaev topological quantum field theories (TQFTs) associated to \(\mathcal{C}\) and \(\mathcal{C}^\zeta\), which will be an essential ingredient in the later sections.

\begin{definition}
  For a ribbon category \(\mathcal{C}\), a \defemph{\(\mathcal{C}\)-colored tangle} is an oriented, framed tangle \(T\) along with a labeling of its connected components by objects of \(\mathcal{C}\).
  (More generally we can allow \(\kk\)-linear combinations of objects of \(\mathcal{C}\), as in the definition of the Reshetikhin-Turaev \(3\)-manifold invariants recalled in Section \ref{sec:surgery colors and A-mfld invariants}.)
  When \(\mathcal{C}\) is graded by an abelian group \(A\) (which we always assume is the universal grading group of \(\mathcal{C}\)), we say the coloring is \defemph{homogeneous} if each \(X_i\) is.

  \(\mathcal{C}\)-colored tangles are a ribbon category and the Reshetikhin-Turaev construction \cite{Reshetikhin1990} defines a ribbon functor \(\linkinvname\) from the category of \(\mathcal{C}\)-colored tangles to \(\mathcal{C}\).
  We call its value \(\linkinv{T}{\mathcal{C}}\) the \defemph{Reshetikhin-Turaev invariant} of \(T\).
\end{definition}

Suppose \(\mathcal{C}^{\zeta}\) is a ribbon zesting of \(\mathcal{C}\) with data \(\zeta = (\lambda, \nu, t, \epsilon)\).
Because the objects of \(\mathcal{C}^{\zeta}\) are the same as those of \(\mathcal{C}\) we can view any \(\mathcal{C}\)-colored tangle \(T\) as a \(\mathcal{C}^{\zeta}\)-colored tangle.
Assuming \(T\) is homogeneous, our goal in this section is to explain how to factor \(\linkinv{T}{\mathcal{C}^{\zeta}}\) into the original invariant \(\linkinv{T}{\mathcal{C}}\) and a morphism \(\zestmini{T}\) of \(\Inv(\mathcal{C}_e)\).
Later we will show that \(\zestmini{T}\) is in fact an invariant of \(T\) as an \(A\)-colored tangle.

While the objects of \(\mathcal{C}^{\zeta}\) are the same as \(\mathcal{C}\) the tensor product is no longer strictly associative.
To make the graphical calculus work in this context we take the convention that (unless otherwise noted) all tensor-products are left-associated.
This means that if \(T\) is a \(\mathcal{C}\)-colored tangle with
\[
  \linkinv{T}{\mathcal{C}} : X_1 \otimes \cdots \otimes X_n \to Y_1 \otimes \cdots \otimes Y_m
\]
then the invariant of the tangle with respect to the zested category is
\[
  \linkinv{T}{\mathcal{C}^{\zeta}} : ( \cdots ( (X_1 \zestts X_2)  \zestts X_3) \zestts \cdots ) 
  \to
  ( \cdots ( (Y_1 \zestts Y_2)  \zestts Y_3) \zestts \cdots ).
\]
If the coloring is homogeneous with \(|X_i| = a_i\) then 
\begin{equation}
  \label{eq:left associated zest tensor}
  \begin{aligned}
    &( \cdots ( (X_1 \zestts X_2)  \zestts X_3) \zestts \cdots )
    \\
    &=
    X_1 \otimes X_2 \otimes \lambda(a_1, a_2) \otimes X_3 \otimes \lambda(a_1 a_2, a_3) \otimes \cdots \otimes X_n \otimes \lambda(a_1 a_2 \cdots a_{n-1}, a_n)
  \end{aligned}
\end{equation}
is the original tensor product of the \(X_i\) interspersed with values of the \(\Inv(\mathcal{C}_e)\)-valued \(2\)-cocycle \(\lambda\).
Here we are using the assumption that our original category  \(\mathcal{C}\) is strict.
We are also assuming that all strands are oriented downwards: upward pointing strands have additional factors of \(\lambda\), as discussed in \cref{sec:zesting summary}.

In terms of link invariants we can think of \(\mathcal{C}^{\zeta}\) as being obtained as a sort of product of \(\mathcal{C}\) and an invertible piece.
However, as in \cref{eq:left associated zest tensor} the ``product'' is obtained by interweaving the original objects \(X_i\) and invertible objects \(\lambda(a_j, a_k)\) from the zesting data.

\begin{theorem}
  \label{thm:factorization existence}
  Let \(T\) be a homogeneous \(\mathcal{C}\)-colored tangle with Reshetikhin-Turaev invariant
  \(
    \linkinv{T}{\mathcal{C}}.
  \)
  Then there is a morphism
  \(
  j_{\zeta}(T)
  \)
  of \(\Inv(\mathcal{C}_e)\) so that the zested Reshetikhin-Turaev invariant factors up to braiding as
  \begin{equation}
    \label{eq:warp factorization motivation}
    \linkinv{T}{\mathcal{C}^{\zeta}}
    =
     \begin{tikzpicture}[line width=1, baseline =.85*2cm, scale=.85]
    \draw[slblue, looseness=.5] (1.5,0) \br (4.75,1.5);
    \draw[slblue, looseness=.5] (2.5,0) \br (5.75,1.5);
    \draw[slblue, looseness=.5] (4.5,0) \br (7.75,1.5);
    \draw[slblue, looseness=.5] (4.75,2.5) \br (1.5,4);
    \draw[slblue, looseness=.5] (5.75,2.5) \br (2.5,4);
    \draw[slblue, looseness=.5] (7.75,2.5) \br (4.5,4);
    \draw[slblue,fill=white] (4.5, 1.5) rectangle node {$\zestmini{T}$} (8,2.5);
    \foreach \x in {1,2,4}{
   	\draw[white, line width=10] (\x,0)--(\x,4);
    	\draw(\x,0)--(\x,4);
    }
    \draw[fill=white] (.75,1.5) rectangle node {$\mathcal{F}_{\mathcal{C}}(T)$} (4.25,2.5);
    \draw (1,4) node[above] {$\scriptstyle X_1$};
    \draw (2,4) node[above] {$\scriptstyle X_2$};
    \draw (4,4) node[above] {$\scriptstyle X_n$};
    \draw (3,1.25) node {$\cdots$};
    \draw (3,2.75) node {$\cdots$};
    \draw (3.25, 4) node[above] {$\scriptstyle \cdots$};
    \draw[slblue] (6.5,1.25) node {$\cdots$};
    \draw[slblue] (6.5,2.75) node {$\cdots$};
    \end{tikzpicture}
  \end{equation}
\end{theorem}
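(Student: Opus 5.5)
Present \(T\) as a composite of elementary tangles: crossings \(\beta^{\pm1}\), twists, cups and caps, each tensored with identity strands. Since \(\linkinvname_{\mathcal{C}^{\zeta}}\) is a ribbon functor, \(\linkinv{T}{\mathcal{C}^{\zeta}}\) is the composite of the zested structure morphisms of \cref{sec:zesting summary}, together with the zested associators \(\alpha^{\zeta}\). The associators are needed to move between the left-associated bracketings of \eqref{eq:left associated zest tensor} and whatever bracketing each elementary piece requires. The plan is to show, by induction on the number of elementary pieces, that every such composite has the form \eqref{eq:warp factorization motivation}. That is, the black \(\mathcal{C}\)-strands carry exactly \(\linkinv{T}{\mathcal{C}}\), and the blue \(\lambda\)-strands carry some morphism between tensor products of invertible objects of \(\mathcal{C}_e\), interleaved with the black strands by braidings.

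First check each generator separately.
\begin{itemize}
\item \textbf{Crossings.} \(\beta^{\zeta}_{X_a,Y_b}\) is \(\beta_{X,Y}\otimes t(a,b)\) on the strands, already in split form.
\item \textbf{Twists.} \(\theta^{\zeta}_{X_a}\) is \(\theta_{X_a}\) times the scalar \(t(a)\epsilon(a)\), which we absorb into the blue part.
\item \textbf{Associators.} \(\alpha^{\zeta}\) consists of \(\nu(a,b,c)\) on blue strands and a crossing of a blue \(\lambda(a,b)\) past a black strand.
\item \textbf{Cups and caps.} \(\coev^{\zeta}\), \(\ev^{\zeta}\) and their primed versions are the ordinary black cups and caps next to blue cups and caps on \(\lambda(a)\), decorated with \(\nu(a)^{\pm1}\) and scalars \(\dim(\lambda(a))^{-1}\), \(\epsilon(a)^{\pm1}\).
\end{itemize}
In every case the generator equals its \(\mathcal{C}\)-image "interleaved" with a blue morphism. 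Here interleaved means tensored after using braidings to move the blue strands to the right, past the black strands. Tensoring with identities on further zested strands just inserts more \(\lambda\)'s, and these are again only braided past.

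The inductive step is the key point. Given two split morphisms composed vertically, we must slide all blue strands of the upper piece to the right of all black strands, through the black morphism \(\linkinv{T_1}{\mathcal{C}}\), so that they meet the blue part of the lower piece. We do this by naturality of the braiding in \(\mathcal{C}\): blue strands can be pulled over (or under) any black morphism.

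The choice of over versus under crossing matters. The crossings appearing in \(\alpha^{\zeta}\) and in the cups and caps come with both orientations. To pass a blue \(\eta\) from an over-crossing to an under-crossing with a black strand \(X_a\), we use \cref{thm:chi lemma}: \(\beta_{X,\eta}=\chi_\eta(a)\beta^{-1}_{\eta,X}\). This changes the morphism only by a scalar, which we absorb into \(\zestmini{T}\). So we may normalize all blue strands to pass, say, behind all black strands. The scalars produced depend only on degrees and on the \(\lambda\)'s, never on the \(X_i\) themselves. Hence the resulting blue morphism is a morphism of \(\Inv(\mathcal{C}_e)\) (really of the pointed subcategory, which is closed under composition), independent of the colors beyond their degrees.

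The main obstacle is bookkeeping rather than any single identity. One issue is confirming that the crossings of blue past black strands arising in associators and duals can all be consistently normalized to the picture of \eqref{eq:warp factorization motivation}. The other is handling upward-oriented strands, whose duals \(\overline{X}_a=X_a^*\otimes\lambda(a)^*\) carry an extra blue factor that must be tracked through cups and caps; this is the source of the phrase "up to braiding". We would first treat the case where all boundary strands point down and the tangle is a braid closure with cups and caps only at the ends. Then we extend by noting that the elementary zested cup and cap morphisms are themselves already split.
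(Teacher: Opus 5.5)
Your proposal is correct and is essentially the paper's argument. You decompose \(T\) into full-width vertical slices of generators. You check that each slice is its \(\mathcal{C}\)-value interleaved with a blue morphism; the paper computes the left-associated braid generator \(B_i\) directly, which is exactly your crossing conjugated by zested associators. You then assemble the slices using compatibility of the warp product with composition, which is your induction step.

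One small correction: in \(\alpha^{\zeta}\) and in the zested cups and caps the blue strands already pass under the black ones. The over-crossings that need \cref{thm:chi lemma} come instead from braiding the extra factor \(\lambda(a)^{*}\) of a zested dual \(\overline{X}_a = X_a^{*}\otimes\lambda(a)^{*}\) over a black strand, which is exactly where the paper invokes it.
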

We note that here \(T\) is a tangle that may be arbitrarily oriented.
In \cref{sec:zest invariant} we show that \(\zestmini{T}\) is in fact an invariant of the underlying colored tangle, arising from a Reshetikhin-Turaev-like functor
\(\zest{}\).
In our later notation \(\zestmini{T} = \zest{e, T}\).

To prove this theorem we want to use some algebraic properties of the almost-tensor product in \cref{eq:warp factorization motivation}.

\begin{definition}
  Suppose \(f : X_1 \cdots X_n \to Y_{1} \cdots Y_m\) and  \(j : \lambda_{1} \cdots \lambda_{n} \to \mu_{1} \cdots \mu_{m}\) are morphisms of \(\mathcal{C}\).
  Then their \defemph{warp product}%
  \note{%
    In weaving, the ``warp'' refers to the vertical yarns through which the ``weft'' is woven.
    The ``warp product'' involves both the warp (the interweaving of the source and target objects of the two morphisms) and a tensor product.
  }
  is the morphism
  \[
    \begin{aligned}
      f \warp j
      \colon
      &X_1 \otimes \lambda_1 \otimes X_2 \otimes \lambda_2 \otimes X_3 \otimes \lambda_3 \otimes \cdots \otimes X_n \otimes \lambda_n
      \\
      &\to
      Y_1 \otimes \mu_1 \otimes Y_2 \otimes \mu_2 \otimes Y_3 \otimes \mu_3 \otimes \cdots \otimes Y_m \otimes \mu_m
    \end{aligned}
  \]
  defined by
  \begin{equation}
    f \warp j \defeq 
    \begin{tikzpicture}[line width=1, baseline =.85*2cm, scale=.85]
    \draw[slblue, looseness=.5] (1.5,0) \br (4.75,1.5);
    \draw[slblue, looseness=.5] (2.5,0) \br (5.75,1.5);
    \draw[slblue, looseness=.5] (4.5,0) \br (7.75,1.5);
    \draw[slblue, looseness=.5] (4.75,2.5) \br (1.5,4);
    \draw[slblue, looseness=.5] (5.75,2.5) \br (2.5,4);
    \draw[slblue, looseness=.5] (7.75,2.5) \br (4.5,4);
    \draw[slblue,fill=white] (4.5, 1.5) rectangle node {$j$} (8,2.5);
    \foreach \x in {1,2,4}{
   	\draw[white, line width=10] (\x,0)--(\x,4);
    	\draw(\x,0)--(\x,4);
    }
    \draw[fill=white] (.75,1.5) rectangle node {$f$} (4.25,2.5);
    \draw (1,4) node[above] {$\scriptstyle X_1$};
    \draw (2,4) node[above] {$\scriptstyle X_2$};
    \draw (4,4) node[above] {$\scriptstyle X_n$};
    \draw (1,0) node[below] {$\scriptstyle Y_1$};
    \draw (2,0) node[below] {$\scriptstyle Y_2$};
    \draw (4,0) node[below] {$\scriptstyle Y_m$};
    \draw (3,1.25) node {$\cdots$};
    \draw (3,2.75) node {$\cdots$};
    \draw (3.25, 0) node[below] {$\scriptstyle \cdots$};
    \draw (3.25, 4) node[above] {$\scriptstyle \cdots$};
    \draw[slblue] (1.5,4) node[above] {$\scriptstyle \lambda_1$};
    \draw[slblue] (2.5,4) node[above] {$\scriptstyle \lambda_2$};
    \draw[slblue] (4.75,4) node[above] {$\scriptstyle \lambda_m$};
    \draw[slblue] (1.5,0) node[below] {$\scriptstyle \phantom{Y}\mu_1\phantom{Y}$};
    \draw[slblue] (2.5,0) node[below] {$\scriptstyle \phantom{Y}\mu_2\phantom{Y}$};
    \draw[slblue] (4.675,0) node[below] {$\scriptstyle \phantom{Y}\mu_m\phantom{Y}$};
    \draw[slblue] (6.5,1.25) node {$\cdots$};
    \draw[slblue] (6.5,2.75) node {$\cdots$};
    \end{tikzpicture}
  \end{equation}

  Note that this definition depends on a choice of factorization of the sources and targets of \(f\) and \(j\); in practice this will always be clear from the context.%
  \note{%
    We could also consider a slightly more general warp product with an object \(\lambda_0\) inserted to the left of \(X_{1}\).
  }
  Here we have drawn the diagram in two colors for clarity; later we will always choose \(j\) to lie in an invertible subcategory of \(\mathcal{C}\) but this is not needed for the definition to make sense.
\end{definition}
We can now restate \cref{thm:factorization existence}.
Our claim is that there is a morphism \(\zestmini{T}\) of \(\Inv(\mathcal{C}_e)\) with 
\[
  \linkinv{T}{\mathcal{C}^{\zeta}}
  =
  \linkinv{T}{\mathcal{C}}
  \warp
  \zestmini{T}.
\]
Before we prove \cref{thm:factorization existence} we will need the following observation. 

\begin{proposition}
  \label{thm:warp product composition compatibility}
Warp products are compatible with composition: when \(f_1, f_2, j_1, j_2\) are appropriately composable,
\[
  (f_2 \warp j_2) \circ (f_1 \warp j_1)
  =
  (f_2 \circ f_1) \warp (j_2 \circ j_1)
  \qedhere
\]
\end{proposition}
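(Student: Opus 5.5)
The plan is to write the warp product as a composite of three morphisms and then let naturality of the braiding remove everything in the middle. By definition,
\[
  f \warp j = U_{\underline{Y},\underline{\mu}} \circ (f \otimes j) \circ S_{\underline{X},\underline{\lambda}} .
\]
Here \(S_{\underline{X},\underline{\lambda}}\) is the ``unweaving'' isomorphism
\[
  X_1 \otimes \lambda_1 \otimes \cdots \otimes X_n \otimes \lambda_n
  \to
  X_1 \otimes \cdots \otimes X_n \otimes \lambda_1 \otimes \cdots \otimes \lambda_n .
\]
It is built from braidings in which each blue \(\lambda\)-strand passes under the black \(X\)-strands to its right. The map \(U_{\underline{Y},\underline{\mu}}\) is the analogous ``reweaving'' isomorphism
\[
  Y_1 \otimes \cdots \otimes Y_m \otimes \mu_1 \otimes \cdots \otimes \mu_m
  \to
  Y_1 \otimes \mu_1 \otimes \cdots \otimes Y_m \otimes \mu_m ,
\]
again with the blue strands passing under the black ones, as in the picture. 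Two facts about these maps matter. First, \(U\) and \(S\) depend only on the chosen factorizations of the objects. Second, reading the diagram by isotopy, \(U_{\underline{Y},\underline{\mu}}\) is the inverse of \(S_{\underline{Y},\underline{\mu}}\). The second fact holds because the blue strands cross the black ones in the same sense, under, both going out and coming back, so the two crossings cancel by the Reidemeister II relation (invertibility of \(\beta\)).

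With this description, I would write the composite as
\[
  (f_2 \warp j_2) \circ (f_1 \warp j_1)
  =
  U_{\underline{Z},\underline{\rho}} \circ (f_2 \otimes j_2) \circ S_{\underline{Y},\underline{\mu}} \circ U_{\underline{Y},\underline{\mu}} \circ (f_1 \otimes j_1) \circ S_{\underline{X},\underline{\lambda}} .
\]
Here \(f_1 : \underline{X} \to \underline{Y}\) and \(f_2 : \underline{Y} \to \underline{Z}\), and \(j_1 : \underline{\lambda} \to \underline{\mu}\) and \(j_2 : \underline{\mu} \to \underline{\rho}\). ``Appropriately composable'' means that the target factorization \((\underline{Y},\underline{\mu})\) used for the first warp product is the source factorization used for the second. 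By the second fact, the middle composite \(S_{\underline{Y},\underline{\mu}} \circ U_{\underline{Y},\underline{\mu}}\) is the identity. Finally, using \((f_2 \otimes j_2)\circ(f_1 \otimes j_1) = (f_2 \circ f_1) \otimes (j_2 \circ j_1)\) (the interchange law), we get \(U \circ ((f_2 f_1) \otimes (j_2 j_1)) \circ S = (f_2 \circ f_1) \warp (j_2 \circ j_1)\).

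The main obstacle is making the claim \(S_{\underline{Y},\underline{\mu}} \circ U_{\underline{Y},\underline{\mu}} = \id\) precise with the paper's crossing conventions. I would check it by writing both \(S\) and \(U\) explicitly as products of braidings \(\beta^{\pm1}_{\mu_i, Y_k}\) and verifying that they are mutually inverse words. This amounts to observing that the reweaving diagram is the reflection of the unweaving diagram through a horizontal line, with over/under data preserved. In a braided category this reflection gives the inverse morphism.

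Equivalently, one can argue purely graphically. Stacking the two pictures, the blue strands coming out of the \(j_1\) box slide under the black strands and then go back under them into the \(j_2\) box. Isotoping the blue strands away from the black ones merges \(j_2\) and \(j_1\) into a single box, and \(f_2\) and \(f_1\) into another. This isotopy is justified by naturality of the braiding and the coherence theorem for braided categories, which we may apply since \(\mathcal{C}\) is assumed strict.
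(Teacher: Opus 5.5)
Your proposal is correct and follows essentially the same route as the paper. The paper's proof simply stacks the two warp-product diagrams and observes that the blue strands pass under the black ones back and forth and cancel by isotopy (Reidemeister~II), merging the boxes into \(f_2 \circ f_1\) and \(j_2 \circ j_1\). Your factorization \(f \warp j = U \circ (f \otimes j) \circ S\), with \(S_{\underline{Y},\underline{\mu}} \circ U_{\underline{Y},\underline{\mu}} = \id\) followed by the interchange law, is a faithful algebraic transcription of that picture.
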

\begin{proof}
  Examine the picture:
  \begin{align*}
    \begin{tikzpicture}[line width=1, baseline =0cm, scale=.5]
      \draw[slblue, looseness=.5] (1.5,0) \br (4.75,1.5);
      \draw[slblue, looseness=.5] (2.5,0) \br (5.75,1.5);
      \draw[slblue, looseness=.5] (4.5,0) \br (7.75,1.5);
      \draw[slblue, looseness=.5] (4.75,2.5) \br (1.5,4);
      \draw[slblue, looseness=.5] (5.75,2.5) \br (2.5,4);
      \draw[slblue, looseness=.5] (7.75,2.5) \br (4.5,4);
      \draw[slblue,fill=white] (4.5, 1.5) rectangle node {$j_1$} (8,2.5);
      \foreach \x in {1,2,4}{
        \draw[white, line width=10] (\x,0)--(\x,4);
        \draw(\x,0)--(\x,4);
      }
      \draw[fill=white] (.75,1.5) rectangle node {$f_1$} (4.25,2.5);
      \draw (3,1.25) node {$\cdots$};
      \draw (3,2.75) node {$\cdots$};
      \draw[slblue] (6.5,1.25) node {$\cdots$};
      \draw[slblue] (6.5,2.75) node {$\cdots$};
      \begin{scope}[yshift=-4cm]
        \draw[slblue, looseness=.5] (1.5,0) \br (4.75,1.5);
        \draw[slblue, looseness=.5] (2.5,0) \br (5.75,1.5);
        \draw[slblue, looseness=.5] (4.5,0) \br (7.75,1.5);
        \draw[slblue, looseness=.5] (4.75,2.5) \br (1.5,4);
        \draw[slblue, looseness=.5] (5.75,2.5) \br (2.5,4);
        \draw[slblue, looseness=.5] (7.75,2.5) \br (4.5,4);
        \draw[slblue,fill=white] (4.5, 1.5) rectangle node {$j_2$} (8,2.5);
        \foreach \x in {1,2,4}{
          \draw[white, line width=10] (\x,0)--(\x,4);
          \draw(\x,0)--(\x,4);
        }
        \draw[fill=white] (.75,1.5) rectangle node {$f_2$} (4.25,2.5);
        \draw (3,1.25) node {$\cdots$};
        \draw (3,2.75) node {$\cdots$};
        \draw[slblue] (6.5,1.25) node {$\cdots$};
        \draw[slblue] (6.5,2.75) node {$\cdots$};
      \end{scope}
    \end{tikzpicture}
    = \quad
    \begin{tikzpicture}[line width=1, baseline =2cm, scale=1]
      \draw[slblue, looseness=.5] (1.5,0) \br (4.75,1.5);
      \draw[slblue, looseness=.5] (2.5,0) \br (5.75,1.5);
      \draw[slblue, looseness=.5] (4.5,0) \br (7.75,1.5);
      \draw[slblue, looseness=.5] (4.75,2.5) \br (1.5,4);
      \draw[slblue, looseness=.5] (5.75,2.5) \br (2.5,4);
      \draw[slblue, looseness=.5] (7.75,2.5) \br (4.5,4);
      \draw[slblue,fill=white] (4.5, 1.5) rectangle node {$j_2 \circ j_1$} (8,2.5);
      \foreach \x in {1,2,4}{
        \draw[white, line width=10] (\x,0)--(\x,4);
        \draw(\x,0)--(\x,4);
      }
      \draw[fill=white] (.75,1.5) rectangle node {$f_2\circ f_1$} (4.25,2.5);
      \draw (3,1.25) node {$\cdots$};
      \draw (3,2.75) node {$\cdots$};
      \draw[slblue] (6.5,1.25) node {$\cdots$};
      \draw[slblue] (6.5,2.75) node {$\cdots$};
    \end{tikzpicture} \quad .
  \end{align*}
\end{proof}

\begin{proof}[Proof of \cref{thm:factorization existence}]
  This follows from the same argument as \cite[Theorem 2]{Delaney2021}, which is a version of our result for links.
  We can choose a Morse diagram for \(T\) and decompose it into a vertical product of elementary tangle generators (caps, cups, and braidings)
  \[
    T = T_k \circ \cdots \circ T_1.
  \]
  \begin{align*}
    \label{fig:vertical tangle decomposition}
    \begin{tikzpicture}[line width=1,scale=.5, baseline=.5*2cm]
    \draw[fill=white] (0,0) rectangle node {$T$} (4,4);
    \end{tikzpicture}
    \quad
    =
    \quad
    \begin{tikzpicture}[line width=1,scale=.5, baseline=.5*2cm]
    \draw[fill=white] (0,0) rectangle node {$T_k$} (4,1);
    \draw[fill=white] (0,2) rectangle node {$T_2$} (4,3);
    \draw[fill=white] (0,3) rectangle node {$T_1$} (4,4);
    \draw (2,1.75) node {$\vdots$};
    \end{tikzpicture}
  \end{align*}
  Since we are only decomposing along vertical composition \(\circ\) (not along horizontal composition \(\otimes\)) our generators include diagrams like
\begin{align*}
  \begin{tikzpicture}[line width=1, baseline=.75cm, scale=.5]
    \draw (0,0)--(0,1.5);
    \draw (1,0)--(1,1.5);
    \draw (0.5,.75) node {$\scriptstyle \cdots$};
    \begin{scope}[xshift=1.75cm]
    \draw (0,0) \br (1,1.5);
    \draw[white, line width=10] (1,0) \br (0,1.5);
    \draw (1,0) \br (0,1.5);
    \end{scope}
    \begin{scope}[xshift=3.5cm]
    \draw (0,0)--(0,1.5);
    \draw (1,0)--(1,1.5);
    \draw (0.5,.75) node {$\scriptstyle \cdots$};
    \end{scope}
    \end{tikzpicture}
    \qquad \qquad
    \begin{tikzpicture}[line width=1, baseline=.75cm, scale=.5]
    \draw (0,0)--(0,1.5);
    \draw (1,0)--(1,1.5);
    \draw (0.5,.75) node {$\scriptstyle  \cdots$};
    \begin{scope}[xshift=1.75cm,yscale=-1,yshift=-1.5cm]
    \draw[looseness=2] (-.25,0)--(-.25,.5) to [out=90, in=90] (1.25,.5) --(1.25,0);
    \end{scope}
    \begin{scope}[xshift=3.5cm]
    \draw (0,0)--(0,1.5);
    \draw (1,0)--(1,1.5);
    \draw (0.5,.75) node {$\scriptstyle  \cdots$};
    \end{scope}
  \end{tikzpicture}
    \qquad \qquad
    \begin{tikzpicture}[line width=1, baseline=.75cm, scale=.5]
    \draw (0,0)--(0,1.5);
    \draw (1,0)--(1,1.5);
    \draw (0.5,.75) node {$\scriptstyle  \cdots$};
    \begin{scope}[xshift=1.75cm]
    \draw[looseness=2] (-.25,0)--(-.25,.5) to [out=90, in=90] (1.25,.5) --(1.25,0);
    \end{scope}
    \begin{scope}[xshift=3.5cm]
    \draw (0,0)--(0,1.5);
    \draw (1,0)--(1,1.5);
    \draw (0.5,.75) node {$\scriptstyle  \cdots$};
    \end{scope}
  \end{tikzpicture}
\end{align*}
  Because \(\warp\) is compatible with composition it suffices to show that
  \[
    \linkinv{T_i}{\mathcal{C}^{\zeta}} = \linkinv{T_i}{\mathcal{C}} \warp \zestmini{T_i}
  \]
  for each generator \(T_i\).

  Let \(B_i\) be the tangle with source \((X_1, \dots, X_n)\) with a braid generator on strands \(i, i+1\).
  Because of our left-association convention we can easily see from the picture 
 \begin{align*}
    \begin{tikzpicture}[line width=1, baseline = 0, scale=1.375]
    \draw[black] (2.75,-.75) node[below] {$\parcen{X_n}$}--(2.75,.75)node[above] {$\parcen{X_n}$};
    \draw[slblue] (-.25,-.75) node[below] {$\parcen{\scriptstyle (a_2,a_1)}$} --(-.25,.75) node[above] {$\parcen{\scriptstyle(a_1,a_2)}$};
    \draw [slblue, fill=white] (-.25,0)  circle (.47) node {$\scriptstyle t(a_1,a_2)$};
    \draw[black] (-2,-.75) node[below] {$\parcen{X_1}$} \br (-1,.75) node[above] {$\parcen{X_2}$};
     \draw[white, line width=10] (-1,-.75) \br (-2,.75);
         \draw[black] (-1,-.75) node[below] {$\parcen{X_2}$} \br (-2,.75) node[above] {$\parcen{X_1}$};
    \draw[slblue] (1.25,-.75) node[below] {$\parcen{\scriptstyle (a_1a_2,a_3)}$} -- (1.25,.75) node[above] {$\parcen{\scriptstyle (a_1a_2,a_3)}$};
    \draw[slblue] (3.75,-.75) node[below] {$\parcen{\scriptstyle(a_1\cdots a_{n-1},a_n)}$} -- (3.75,.75) node[above] {$\parcen{\scriptstyle(a_1\cdots a_{n-1},a_n)}$};
    \draw[black] (.5,-.75) node[below] {$\parcen{X_3}$} -- (.5,.75) node[above] {$\parcen{X_3}$};
    \draw[black] (2, 0) node {$\cdots$};
    \end{tikzpicture}
  \end{align*}  
that
  \(
    \linkinv{B_1}{\mathcal{C}^{\zeta}} = \linkinv{B_1}{\mathcal{C}} \warp \zestmini{B_1}
  \)
  for the morphism
  \begin{align*}
    \zestmini{B_1} \colon
    &\lambda(a_1, a_2) \otimes \lambda(a_1a_2, a_3) \otimes \cdots \otimes \lambda(a_1 \cdots a_{n-1}, a_n)
    \\
    &\to
    \lambda(a_2, a_1) \otimes \lambda(a_1a_2, a_3) \otimes \cdots \otimes \lambda(a_1 \cdots a_{n-1}, a_n)
    \\
    \zestmini{B_1} 
    &=
    t(a_1, a_2) \otimes \id.
  \end{align*}
  
 For the parenthesization
  \[
    \left( \cdots \left(X_1 \zestts  \cdots \zestts \left(X_i \zestts X_{i+1}\right) \right) \zestts \cdots \zestts X_n\right)
  \]
  the braiding on strands \(i, i+1\) is similarly given by the braiding of \(\mathcal{C}\) and \(t(a_i, a_{i+1})\), but our convention is that all tensor products are left-associated.
  Therefore \(\linkinv{B_i}{\mathcal{C}^{\zeta}}\) is given by
  
  \begin{align*}
    \begin{tikzpicture}[line width=1, baseline = 0, scale=1.375, slblue]
    \draw[black] (1,0)--(1,0 + 1.5);
    \draw (4,0)--(4,0 + 1.5);
    \draw[black] (2,-.75)--(2,-1.5);
    \draw[black] (1,-2.25)--(1,-.75);
    \draw (4,-2.25)-- (4,-.75);
    \draw[black] (2,0)--(2,0 + .75);
    \draw (3,-1.5)--(3,0 + .75);
    \draw (4,-.75)--(4,0 + .75);
    \draw[fill=white] (2.375,0 + -1.5) rectangle node {$\scriptstyle \nu(a_1\cdots a_{i-1},a_{i+1}a_i)^{-1}$} (4.675,-1);
    \draw[fill=white] (2.375,0 + .25) rectangle node {$\scriptstyle \nu(a_1\cdots a_{i-1},a_i,a_{i+1})$} (4.675,0 + .75); 
    \draw (3,0+.75)  to [out=90, in=-90] (2,0 + 1.5);
    \draw[white, line width=10] (2,0+.75)  to [out=90, in=-90] (3,0 + 1.5);
    \draw[black] (2,0+.75) to [out=90, in=-90] (3,0 + 1.5);
    \draw[black] (1,-.75) \br (2,0);
    \draw[white, line width=10] (2,-.75) \br (1,0);
    \draw[black] (2,-.75) \br (1,0);
    \draw [fill=white] (3,-.75/2)  circle (.475) node {$\scriptstyle t(a_i,a_{i+1})$};
    \draw (2,-2.25) \br (3,-1.5);
    \draw[white, line width= 10] (3,-2.25) \br (2,-1.5);
    \draw[black] (3,-2.25) \br (2,-1.5);
    \draw[black] (-2.25,-2.25) node[below] {$\parcen{X_1}$} -- (-2.25,1.5) node[above] {$\parcen{X_1}$};
    \draw[black] (-1.75, -2.25+3.75/2) node {$\cdots$};
    \draw[black] (-1.25,-2.25) node[below] {$\parcen{X_{i-1}}$} -- (-1.25,1.5) node[above] {$\parcen{X_{i-1}}$};
    \draw (-.125,-2.25) node[below] {$\parcen{\scriptstyle (a_1\cdots a_{i-2},a_{i-1})}$} -- (-.125,1.5) node[above] {$\scriptstyle (a_1\cdots a_{i-2},a_{i-1})$};
    \draw[black] (1,-2.25) node[below] {$\parcen{X_{i+1}}$};
    \draw (2,-2.25) node[below] {$\parcen{\scriptstyle (a_1\cdots a_{i-1},a_{i+1})}$};
    \draw[black] (3,-2.25) node[below] {$\parcen{X_i}$};
    \draw (4.25,-2.25) node[below] {$\parcen{\scriptstyle(a_1\cdots a_{i-1}a_{i+1},a_i)}$};
    \draw[black] (1,1.5) node[above] {$\parcen{X_i}$};
    \draw (2,1.5) node[above] { $ \scriptstyle (a_1\cdots a_{i-1},a_i)$};
    \draw[black] (3.125,1.5) node[above] {$\parcen{X_{i+1}}$};
    \draw (4.25,1.5) node[above] {$\scriptstyle (a_1\cdots a_i,a_{i+1})$};
      \draw[black] (5.5,-2.25) node[below] {$\parcen{X_{i+2}}$} -- (5.5,1.5) node[above] {$\parcen{X_{i+2}}$};
    \draw[black] (6, -2.25+3.75/2) node {$\cdots$};
    \end{tikzpicture}
  \end{align*}

  By pulling the \textcolor{slblue}{blue} strands under the black strands to the right of the diagram we obtain the required factorization.
  A similar computation works for the cups and caps.

  If our tangles include upward-oriented strands we need to be slightly more careful.
  For example,
  \begin{equation*}
    \linkinv{
      \begin{tikzpicture}[line width=1, baseline=.75cm, scale=1]
        \draw[->] (0,0) -- (0,1.5) node[above] {$X_{1}$} ;
        \draw (1,0) -- (1,1.5) node[above] {$X_{2}$};
      \end{tikzpicture}
    }{\mathcal{C}^{\zeta}}
    \quad =
    \begin{tikzpicture}[line width=1, baseline=.75cm, scale=1, slblue]
      \draw[->, black] (0,0) -- (0,1.5) node[above] {$X_{1}$} ;
      \draw[->] (1,0) -- (1,1.5) node[above] {$(a_{1}, a_{1}^{-1})$} ;
      \draw[black] (2,0) -- (2,1.5) node[above] {$X_{2}$};
      \draw (3,0) -- (3,1.5) node[above] {$(a_{1}^{-1}, a_{2})$} ;
    \end{tikzpicture}
  \end{equation*}
  as the zested dual \eqref{eq:zested dual} includes an extra object.
  Similarly, when braiding with an upward oriented strand, we braid \(\lambda(a_{1}, a_{1}^{-1})^{*}\) over a strand labeled by \(X_{2}\).
  This is not a problem, because we can use \cref{thm:chi lemma} and \cref{thm:chi facts}(d) to find that
  \begin{equation*}
    \mathcal{F}_{\mathcal{C}^\zeta} \left ( 
    \begin{tikzpicture}[line width=1, baseline=12.5, scale=.75] 
    \draw[<-] (0,0) node[below] {$X_2$} \br (1,1.5) node[above] {$X_2$};
    \draw[white, line width=10] (1,0) \br (0,1.5);
    \draw[->] (1,0) node[below] {$X_1$} \br (0,1.5) node[above] {$X_1$};
  \end{tikzpicture} \,\,
  \right ) = 
  \begin{tikzpicture}[line width=1,baseline=12.5, scale=.75]
  \draw (0,0) node[below] {$\phantom{a_1^{-1}}X_2\phantom{a_1^{-1}}$} to [out=90, in=-90] (2,1.5) node[above] {$X_2$};
  \draw[white, line width=10] (1,0) to [out=90, in=-90] (0,1.5);
  \draw[->] (1,0) node[below] {$\phantom{a_1^{-1}}X_1\phantom{a_1^{-1}}$} to [out=90, in=-90] (0,1.5) node[above] {$X_1$};
  \draw[white, line width=10] (2,0) to [out=90, in=-90] (1,1.5);
  \draw[slblue,->] (2,0) node[below] {$\phantom{a_1^{-1}}(a_1)\phantom{a_1^{-1}}$} to [out=90, in=-90] (1,1.5) node[above] {$(a_1)$};
  \draw[slblue] (3.5,0) node[below] {$(a_2, a_1^{-1})$} --(3.5,1.5) node[above] {$(a_1^{-1},a_2)$};
  \draw[slblue, fill=white] (3.5,.75) circle (.33) node {$\scriptstyle t$};
  \end{tikzpicture}
  = \omega(a_1,a_1^{-1},a_2) 
    \begin{tikzpicture}[line width=1,baseline=12.5, scale=.75]
     \draw[slblue,->] (2,0) node[below] {$\phantom{a_1^{-1}}(a_1)\phantom{a_1^{-1}}$} to [out=90, in=-90] (1,1.5) node[above] {$(a_1)$};
  \draw[white, line width=10] (0,0) to [out=90, in=-90] (2,1.5);
  \draw (0,0) node[below] {$\phantom{a_1^{-1}}X_2\phantom{a_1^{-1}}$} to [out=90, in=-90] (2,1.5) node[above] {$X_2$};
  \draw[white, line width=10] (1,0) to [out=90, in=-90] (0,1.5);
  \draw[->] (1,0) node[below] {$\phantom{a_1^{-1}}X_1\phantom{a_1^{-1}}$} to [out=90, in=-90] (0,1.5) node[above] {$X_1$};
  \draw[slblue] (3.5,0) node[below] {$(a_2, a_1^{-1})$} --(3.5,1.5) node[above] {$(a_1^{-1},a_2)$};
  \draw[slblue, fill=white] (3.5,.75) circle (.33) node {$\scriptstyle t$};
  \end{tikzpicture}
  \end{equation*}
\end{proof}

Our proof shows that the behavior of \(\warp\) under tensor products is more complicated than for composition.
The issue is that \(\mathcal{C}^{\zeta}\) is no longer strict, so we have to keep track of parenthesizations.
In \cref{sec:zest invariant} we show how to do this by using tangle diagrams with regions (not just tangles) colored by \(A\).%
\note{%
  Algebraically, the Reshetikhin-Turaev-inspired monoidal functor \(\zest{}\) underlying \(\zestmini{}\) is really a 2-functor in the sense that the tensor product has domains.
  To keep track of them we introduce region colors.
}
We conclude this section with a motivating example.

\begin{example}
  \label{ex:not monoidal}
  Consider objects \(X,Y\) of \(\mathcal{C}\) with \(|X| = a, |Y| = b\) and \(B\) the tangle braiding \(X\) over \(Y\).
  Then
  \[
    \linkinv{B}{\mathcal{C}} = \beta_{X,Y}
    =
    \begin{tikzpicture}[line width=1, baseline=12.5, scale=.75] 
    \draw (0,0) node[below] {$Y$} \br (1,1.5) node[above] {$Y$};
    \draw[white, line width=10] (1,0) \br (0,1.5);
    \draw (1,0) node[below] {$X$} \br (0,1.5) node[above] {$X$};
    \end{tikzpicture}
  \]
  and
  \[
    \linkinv{B}{\mathcal{C}^{\zeta}} = \beta_{X,Y} \otimes t(a,b)
    =
    \begin{tikzpicture}[line width=1, baseline=12.5, scale=.75] 
    \draw (0,0) node[below] {$\parcen{Y}$} \br (1,1.5) node[above] {$\parcen{Y}$};
    \draw[white, line width=10] (1,0) \br (0,1.5);
    \draw (1,0) node[below] {$\parcen{X}$} \br (0,1.5) node[above] {$\parcen{X}$};
    \begin{scope}[xshift=2.5cm,slblue]
    \onecirc{\lambda(b,a)}{\lambda(a,b)}{\scriptstyle t(a,b)};
    \end{scope}
    \end{tikzpicture}
  \]
  so \(\zestmini{B} = t(a,b)\).

  Now let \(W\) be another object with \(|W| = i\) and let 
  \[
    S =
    \begin{tikzpicture}[line width=1, baseline = 10]
      \draw (0,0) -- (0,1);
      \draw (0,1) node[above] {$W$};
    \end{tikzpicture}
  \]
  be the tangle with a single downward-oriented strand labeled by \(W\).
  Clearly
  \[
    \linkinv{S}{\mathcal{C}}
    =
    \linkinv{S}{\mathcal{C}^{\zeta}}
    =
    \id_W
  \]
  and
  \[
    \linkinv{S \du B}{\mathcal{C}}
    =
    \linkinv{S}{\mathcal{C}}
    \otimes
    \linkinv{B}{\mathcal{C}}
    =
    \id_{W} \otimes \beta_{X,Y}.
  \]
  On the other hand,
  \begin{equation}
    \label{eq:shifted braiding value}
    \linkinv{S \zestts B}{\mathcal{C}^{\zeta}}
    =
    \begin{tikzpicture}[line width=1, baseline = -7.5*3/2, scale=.75, slblue]
    \draw[black] (1,0)--(1,0 + 1.5);
    \draw (4,0)--(4,0 + 1.5);
    \draw[black] (2,-.75)--(2,-1.5);
    \draw[black] (1,-2.25)--(1,-.75);
    \draw (4,-2.25)-- (4,-.75);
    \draw[black] (2,0)--(2,0 + .75);
    \draw (3,-1.5)--(3,0 + .75);
    \draw (4,-.75)--(4,0 + .75);
    \draw[fill=white] (2.6,0 + -1.45) rectangle node {$\scriptstyle \nu(i,b,a)^{-1}$} (4.4,-.85);
    \draw[fill=white] (2.6,0 + .1) rectangle node {$\scriptstyle \nu(i,a,b)$} (4.4,0 + .7); 
    \draw (3,0+.75)  to [out=90, in=-90] (2,0 + 1.5);
    \draw[white, line width=10] (2,0+.75)  to [out=90, in=-90] (3,0 + 1.5);
    \draw[black] (2,0+.75) to [out=90, in=-90] (3,0 + 1.5);
    \draw[black] (1,-.75) \br (2,0);
    \draw[white, line width=10] (2,-.75) \br (1,0);
    \draw[black] (2,-.75) \br (1,0);
    \draw [fill=white] (3,-.75/2)  circle (.25) node {$t$};
    \draw (2,-2.25) \br (3,-1.5);
    \draw[white, line width= 10] (3,-2.25) \br (2,-1.5);
    \draw[black] (3,-2.25) \br (2,-1.5);
    \draw[black] (0,-2.25) node[below] {$\parcen{W}$} -- (0,1.5) node[above] {$\parcen{W}$};
    \draw[black] (1,-2.25) node[below] {$\parcen{Y}$};
    \draw (2,-2.25) node[below] {$\lambda(i,b)$};
    \draw[black] (3,-2.25) node[below] {$\parcen{X}$};
    \draw (4.25,-2.25) node[below] {$\lambda(ib,a)$};
    \draw[black] (1,1.5) node[above] {$\parcen{X}$};
    \draw (2,1.5) node[above] {$\lambda(i,a)$};
    \draw[black] (3,1.5) node[above] {$\parcen{Y}$};
    \draw (4.25,1.5) node[above] {$\lambda(ia,b)$};
    \end{tikzpicture}
  \end{equation}
  because the tensor product is no longer strict, so
  \[
    \zestmini{S \du B} =
    \nu(i,b,a)^{-1}
    \circ
    ( t(a,b) \otimes \id_{\lambda(i,a,b)} )
    \circ
    \nu(i,a,b),
  \]
  which is in general \emph{not} equal to \(\zestmini{S} \otimes \zestmini{B}\).
  We see the issue is the presence of the \(i\)-colored strand in the background, which introduces some associators that are in general non-trivial.
  In the next section we explain how to handle these by using diagrams with region labels.
\end{example}

\section{The zest invariant of a tangle}
\label{sec:zest invariant}
In this section we define a colored tangle invariant \(\zest{}\) and show it agrees with the morphism \(\zestmini{}\) appearing in \cref{thm:factorization existence}.
We also give an explicit, diagrammatic procedure for computing \(\zest{}\) that closely resembles the construction of quandle cocycle invariants \cite{Carter2003}.

Throughout this section we fix a category \(\mathcal{C}\) with universal grading group \(A\) and ribbon zesting data \(\zeta\).
As before we write \(\mathcal{C}^{\zeta}\) for its zest.

\subsection{\texorpdfstring{\(A\)}{A}-colored tangles}

\begin{definition}
  Let \(A\) be an abelian group.
  An \defemph{\(A\)-colored tangle} is an oriented, framed tangle \(T\) with each of its components labeled by an element of \(A\).
\end{definition}

\(A\)-colorings have a topological interpretation as cohomology classes on the tangle complement in $\mathbb{R}^2 \times I$ (\cref{thm:A coloring cohomology}).
Just as for \(\mathcal{C}\)-colored tangles, \(A\)-colored tangles form a category: we can compose two tangles if the \(A\)-colorings of the components agree.
The same is true for diagrams of \(A\)-colored tangles.
It turns out to be useful to consider an additional coloring on the regions.

\begin{marginfigure}
  \centering
  \begin{tikzpicture}[line width=1, baseline = 10]
    \draw[<-] (0,0) -- (0,1);
    \draw (0,1) node[above] {$a$};
    \draw[accent] (-0.25, 0.5) node[left] {$i$};
    \draw[accent] (0.25, 0.5) node[right] {$ia$};
  \end{tikzpicture}
  \quad \text{ and } \quad
  \begin{tikzpicture}[line width=1, baseline = 10]
    \draw[->] (0,0) -- (0,1);
    \draw (0,1) node[above] {$a$};
    \draw[accent] (-0.25, 0.5) node[left] {$i$};
    \draw[accent] (0.25, 0.5) node[right] {$ia^{-1}$};
  \end{tikzpicture}
  \caption{
    A shadow coloring satisfies these relations at every edge, depending on its orientation.
    Here the black label corresponds to the edge and the \textcolor{accent}{gold} labels to the regions.
  }%
  \label{fig:coloring-rule-region}
\end{marginfigure}

\begin{definition}
  \label{def: shadow coloring}
  The \defemph{regions} of a tangle diagram \(D\) are the connected components of its complement.
  If \(D\) is an \(A\)-colored tangle diagram, a \defemph{shadow coloring}\note{%
    This is the usual name in the quandle literature \cite{Carter2001}.
    Shadow colorings of tangle diagrams are unrelated to Turaev's shadows \cite[Chapter IX]{Turaev1994}.
  }
  of \(D\) is a labeling of the regions of \(D\) with elements of \(A\) subject to the rules in \cref{fig:coloring-rule-region}.
  For a given \(A\)-coloring of a tangle diagram shadow colorings are naturally in bijection with \(A\): once we fix the color of a single region the rules in \cref{fig:coloring-rule-region} determine the remaining ones.
  Our convention is to view a shadow colored tangle as an \(A\)-colored tangle (diagram) along with a choice of element \(i \in A\) assigned to the leftmost region.
  For a link this is the unbounded region.
  We denote an arbitrary shadow-colored tangle as \((T,i)\) and call \(i\) the \defemph{base color} of (the region coloring of) \((T,i)\).

  When \(T\) has left-hand color \(i\) and right-hand color \(j\) we say it has \defemph{total color} \(|T| = i^{-1}j\).
  With this notation, the disjoint union of region-colored tangles can be written
  \[
    (i, T_1 \du T_2)
    =
    (i, T_1) \du (i|T_1|, T_2)
  \]
  as seen from the picture
  \begin{align*}
  (i, T_1 \du T_2) =    \begin{tikzpicture}[line width=1, baseline =3/4*.75cm, scale=.75]
  \draw (0,0) --(0,1.5);
  \draw (1,0)--(1,1.5);
  \draw[fill=white] (-.25,.45) rectangle node {$T_1$} (1.25,1.05);
  \draw[accent] (-.25,.75) node[left] {$\scriptstyle i$};
  \draw[accent] (1.25,.75) node[right] {$\scriptstyle i|T_1|$};
  \draw[accent] (4.00,.75) node[right] {$\scriptstyle i|T_1||T_2|$};
  \draw (0.5,.225) node {$\cdots$};
  \draw (0.5,1.275) node {$\cdots$};
  \begin{scope}[xshift=2.75cm]
  \draw (0,0) --(0,1.5);
  \draw (1,0)--(1,1.5);
  \draw[fill=white] (-.25,.45) rectangle node {$T_2$} (1.25,1.05);
  \draw (0.5,.225) node {$\cdots$};
  \draw (0.5,1.275) node {$\cdots$};
  \end{scope}
  \end{tikzpicture}
  \end{align*}
  with the region colors written in {\color{accent} gold}. Note that the total color \(|T_1 \du T_2| = |T_1||T_2|\)  is multiplicative under disjoint union of tangles. 
\end{definition}

\begin{remark}
  Algebraically we can think of an \(A\)-colored link as a coloring by the conjugation quandle of \(A\), which is trivial because \(A\) is abelian.
  Our definition of shadow coloring is then the usual one for quandles, with \(A\) acting on itself by multiplication.
\end{remark}

\begin{definition}
\label{def: tangle 2-category}
  There is a \(2\)-category \(\tangcat\) of oriented, framed \(A\)-shadow colored tangles with:
  \begin{description}
    \item[objects]
      elements of \(A\)
    \item[\(1\)-morphisms]
      \(i \to ia\) are lists \((i;(a_1, \epsilon_1), \dots, (a_n, \epsilon_n))\) of oriented points labeled with elements of \(A\) with total color \(a_1^{\epsilon_1} \cdots a_n^{\epsilon_n} = a\).
      For clarity we include the domain \(i\) of this morphism (i.e.\@ the left-hand region color).
      When it does not cause confusion we will abbreviate \(a = (a,1)\) and \(a^{*} = (a, -1)\) for downward and upward oriented points, respectively.%
      \note{
        Note that \(a^* = (a, -1)\) and \((a^{-1},1)\) are \emph{not} the same \(1\)-morphism, although they correspond to the same region coloring.
      }
    A general 1-morphism in \(\tangcat\) looks like the following picture, with a positive \(\epsilon\) drawn oriented downwards and a negative \(\epsilon\) upwards.
    \begin{center}
    \begin{tikzpicture}[line width=1]
    \draw (0,0) node[left=-.5cm] {${\color{accent} i}$};
    \draw (5,0) node[right=.5cm] {${\color{accent} ia}$};
    \foreach \x in {1,2,3}{
    \draw[fill=black] (\x,0) node[above=.5cm] {$a_{\x}$} circle (.0625);
    }
    \draw (4,0) node[] {$\cdots$};
    \draw[fill=black] (5,0) node[above=.5cm] {$a_n$} circle (.0625);
    \foreach \x in {1,3,5}{
    \draw[->] (\x,.25) -- (\x,-.25);
    }
      \draw[->] (2,-.25) -- (2,.25);
    \end{tikzpicture}
    \end{center}
    
    \item[\(2\)-morphisms]
      \((i; (a_1, \epsilon_1), \dots, (a_n, \epsilon_n)) \to (i;(b_1, \epsilon_1'), \dots, (b_m, \epsilon_m'))\) are \(A\)-colored, oriented, framed tangles \((i,T)\) with base color \(i\).

    \begin{center}
    \begin{tikzpicture}[line width=1]
    \draw (0,-1.5) node[left=-.5cm] {${\color{accent} i}$};
    \draw (5,-1.5) node[right=.5cm] {${\color{accent} ia}$};
    \draw[fill=black] (1,0) node[above=.5cm] {$a_{1}$} circle (.0625);
    \draw[fill=black] (2,0) node[above=.5cm] {$a_{1}$} circle (.0625);
    \draw[fill=black] (3,0) node[above=.5cm] {$a_{2}$} circle (.0625);
    \draw (4,0) node[] {$\cdots$};
    \draw[fill=black] (5,0) node[above=.5cm] {$a_n$} circle (.0625);
    \foreach \x in {1,3,5}{
    \draw[->] (\x,.25) -- (\x,-.25);
    }
      \draw[->] (2,-.25) -- (2,.25);
      
    \begin{scope}[yshift=-3cm]
  
    \draw[fill=black] (1,0) node[above=-1cm] {$a_2$} circle (.0625);
    \draw[fill=black] (3,0) node[above=-1cm] {$a_n$} circle (.0625);
    \draw (4,0) node[] {$\cdots$};
     \draw[fill=black] (2,0) node[above=-1cm] {$b_1$} circle (.0625);
    \draw (4,0) node[] {$\cdots$};
    \draw[fill=black] (5,0) node[above=-1cm] {$b_1$} circle (.0625);
    \foreach \x in {1,2,3}{
    \draw[->] (\x,.25) -- (\x,-.25);
    }
    \draw[->] (5,-.25) -- (5,.25);
    \end{scope}
    \draw[looseness=1.5] (1,-.25) to [out=-90,in=-90] (2,-.25);
    \draw[looseness=1.5] (3,-.25) to [out=-90, in=90] (1,-3);
      \draw[looseness=1.5] (5,-.25) to [out=-90, in=90] (3,-3);
      \draw[white, line width=10, looseness=1.25] (2,-2.75) to [out=90,in=90] (5,-2.75);
       \draw[looseness=1.25] (2,-2.75) to [out=90,in=90] (5,-2.75);
    \end{tikzpicture}
    \end{center}
      
  \end{description}
  The composition \(\du\) of \(1\)-morphisms is concatenation, with appropriate restrictions on the base colors: for example
  \[
    (i;a) \du (ia;b) = (i; a, b)
  \]
  but \((i; a) \du (i; b)\) is not defined.
  This corresponds to the disjoint union in the monoidal \(1\)-category of tangles, while the composition of \(2\)-morphisms is the usual composition of tangles.
  For this reason we write \(\du\) for the composition of \(1\)-morphisms.
\end{definition}

\begin{remark}
Since the region labels of any given 2-morphism in \(\tangcat\) are determined by the base color and the underlying colored tangle, in diagrams we will often suppress all region labels except the base color and perhaps the rightmost region color, as in the pictures above illustrating \cref{def: tangle 2-category}.
\end{remark}

\subsection{Definition of the zest invariant}
\label{sec:zest invariant def}
In this section we construct a \(2\)-functor \(\tangcat \to \delooping \Inv(\mathcal{C}_{e})\) which produces the invariant of (oriented, framed) shadow colored tangles that arises in the zesting construction. The target for this functor is the delooping \(\delooping \Inv(\mathcal{C}_e)\) of the pointed subcategory \(\Inv(\mathcal{C}_e)\), i.e. the \(2\)-category that arises from viewing a monoidal \(1\)-category as a \(2\)-category with a single object. 

Given ribbon zesting data $\zeta=(\lambda,\nu,t,\epsilon)$ with $\lambda$ valued in \(\Inv(\mathcal{C}_e)\) we want to define a \(2\)-functor \(\zestinvname : \tangcat \to \delooping\Inv(\mathcal{C}_e)\) that satisfies the factorization property of \cref{thm:factorization existence}; this will also establish that \(\zestinvname\) is a tangle invariant and allow us to compute it locally in a manner analogous to the usual Reshetikhin-Turaev tangle invariants.
To do this we need to choose the values of \(\zestinvname\) on the generators of  \(\tangcat\).

This is trivial to describe on generating objects: since \(\delooping{\Inv}(\mathcal{C}_e)\) has a single object \(\bullet\), we have \(\zest{i} = \bullet\) for all objects \(i \in A\). Next we consider \(\zestinvname\) on the \(1\)-morphisms of \(\tangcat\); recall that these are lists of colored, oriented points, which are usually viewed as the objects of a tangle category.

\subsubsection{The \texorpdfstring{\(2\)}{2}-functor \texorpdfstring{\(\mathcal{J}_\zeta\)}{𝒥\_ζ} on \texorpdfstring{\(1\)}{1}-morphisms}
\label{sec:zest invariant def on 1 morphisms}
For a generating \(1\)-morphism we define%
\note{
  Here \(\lambda(a,a^{-1})^*\) is the dual of \(\lambda(a,a^{-1})\) in \(\mathcal{C}\).
}
\begin{equation}
  \label{eq:zest object rules}
  \begin{aligned}
    \zest{i;a}
    &\defeq
    \lambda(i,a)
    \\
    \zest{i;a^*}
    &\defeq
    \lambda(a, a^{-1})^* \otimes \lambda(i,a^{-1})
  \end{aligned}
\end{equation}
and extend to arbitrary \(1\)-morphisms via horizontal composition.
For example,
\begin{equation*}
  \zest{i;a_1, a_2}
  =
  \zest{(i;a_1) \du (ia_1; a_2)}
  =
  \zest{i; a_1}
  \otimes
  \zest{i a_1; a_2}
  =
  \lambda(i,a_1) \otimes \lambda(ia_1, a_2)
\end{equation*}
and
\begin{equation*}
  \zest{i; a_1^*, a_2}
  =
  \lambda(a_1, a_1^{-1})^*
  \otimes
  \lambda(i,a_1)
  \otimes
  \lambda(ia_1, a_2).
\end{equation*}
These values are essentially determined by \cref{thm:factorization existence}: for example, if \(W\) and \(X\) are homogeneous objects of \(\mathcal{C}\) with degrees \(|W| = i\), \(|X| = a\), then
\[
  \linkinv{W \du X}{\mathcal{C}^{\zeta}}
  =
  W \otimes X \otimes \lambda(i,a).
\]
We think of a strand with degree \(a\) next to a region labeled by \(i\) as coming from \(W \zestts X\), so, for our factorization to hold, we want to choose \(\zest{i; a}\) so that
\[
  \linkinv{W \du X}{\mathcal{C}^{\zeta}}
  =
  \linkinv{W \otimes X}{\mathcal{C}}
  \otimes
  \zest{i;a}
\]
hence \(\zest{i;a} \defeq \lambda(i,a)\), which gives the first rule in \cref{eq:zest object rules}.
The second comes from a similar analysis of duality of objects in \(\mathcal{C}^{\zeta}\).

\subsubsection{The \texorpdfstring{\(2\)}{2}-functor \texorpdfstring{\(\mathcal{J}_\zeta\)}{𝒥\_ζ} on \texorpdfstring{\(2\)}{2}-morphisms}
\label{sec:zest invariant def on 2 morphisms}
Something similar works for tangles.
Given an \(A\)-colored tangle \((i, T)\) we can choose a homogeneous \(\mathcal{C}\)-coloring \(\widetilde T\) of \(T\) inducing the \(A\)-coloring and an object \(W\) of \(\mathcal{C}\) with degree \(i\).
(Here we use the faithfulness of the \(A\)-grading of \(\mathcal{C}\).)
As in \cref{ex:not monoidal} let \(S\) be a downward-oriented strand colored by \(W\).
Then the requirement that
\(
  \linkinv{S \du \widetilde T}{\mathcal{C}^\zeta}
  =
  \linkinv{S\du \widetilde T}{\mathcal{C}}
  \warp
  \zest{i; T}
\)
or graphically
\[
  \linkinv{S \du \widetilde T}{\mathcal{C}^\zeta}
    =
     \begin{tikzpicture}[line width=1, baseline =.85*2cm, scale=.85]
     \draw[slblue, looseness=.5] (1.5,0) \br (4.75,1.5);
    \draw[slblue, looseness=.5] (2.5,0) \br (5.75,1.5);
    \draw[slblue, looseness=.5] (4.5,0) \br (7.75,1.5);
    \draw[slblue, looseness=.5] (4.75,2.5) \br (1.5,4);
    \draw[slblue, looseness=.5] (5.75,2.5) \br (2.5,4);
    \draw[slblue, looseness=.5] (7.75,2.5) \br (4.5,4);
    \draw[slblue,fill=white] (4.5, 1.5) rectangle node {$\zest{i; T}$} (8,2.5);
    \foreach \x in {0,1,2,4}{
   	\draw[white, line width=10] (\x,0)--(\x,4);
    	\draw(\x,0)--(\x,4);
    }
    \draw[fill=white] (.75,1.5) rectangle node {$\mathcal{F}_{\mathcal{C}}(\widetilde{T})$} (4.25,2.5);
    \draw (0,4) node[above] {$\scriptstyle W$};
    \draw (1,4) node[above] {$\scriptstyle X_1$};
    \draw (2,4) node[above] {$\scriptstyle X_2$};
    \draw (4,4) node[above] {$\scriptstyle X_m$};
    \draw (3,1.25) node {$\cdots$};
    \draw (3,2.75) node {$\cdots$};
    \draw (3.25, 4) node[above] {$\scriptstyle \cdots$};
    \draw[slblue] (6.5,1.25) node {$\cdots$};
    \draw[slblue] (6.5,2.75) node {$\cdots$};
    \end{tikzpicture}
\]
completely determines \(\zest{}\).

We already did this computation for downward-oriented braid generators in \cref{ex:not monoidal} and concluded that the value of \(\zest{}\) on a crossing is 
\[
\label{eq: def phi}
 \phi_i(a_1,a_2) \defeq \nu(i,a_2,a_1)^{-1} \circ \left ( t(a,b) \otimes \id_{\lambda(i,a_1a_2)} \right ) \circ \nu(i, a_1, a_2)
\]
or graphically
\begin{equation}
  \label{eq:zested braiding}
  \zest{
    \begin{tikzpicture}[line width=1, baseline=12.5, scale=.75] 
    \draw[<-] (0,0) node[below] {$a_2$} \br (1,1.5) node[above] {$a_2$};
    \draw[white, line width=10] (1,0) \br (0,1.5);
    \draw[<-] (1,0) node[below] {$a_1$} \br (0,1.5) node[above] {$a_1$};
    \draw[accent] (-.5,.75) node {$i$};
  \end{tikzpicture} \,\,
  }
  \quad = \quad  \begin{tikzpicture}[line width=1,scale=1, baseline=.75cm, slblue]
  \draw (1,0) node[below] {$\scriptstyle \lambda(i,a_2)$}--(1,1.5) node[above] { $\scriptstyle \lambda(i,a_1)$};
  \draw (2,0) node[below] {$\scriptstyle \phantom{a} \lambda(ia_2,a_1)$}--(2,1.5) node[above] {$\scriptstyle \phantom{a}\lambda(ia_1,a_2)$};
  \draw[fill=white] (.75,.5) rectangle node {$\scriptstyle \phi_i(a_1,a_2)$} (2.25,1);
  \end{tikzpicture} \quad := \quad
   \begin{tikzpicture}[line width=1,scale=1, baseline=2cm, slblue]
   \draw (1,0) node[below] {$\scriptstyle \lambda(i,a_2)$}--(1,4) node[above] { $\scriptstyle \lambda(i,a_1)$};
  \draw (2,0) node[below] {$\scriptstyle \phantom{a} \lambda(ia_2,a_1)$}--(2,4) node[above] {$\scriptstyle \phantom{a}\lambda(ia_1,a_2)$};
    \draw[fill=white] (.675,.75) rectangle node {$\scriptstyle \nu(i,a_2,a_1)^{-1}$} (2.375,1.25);
    \draw[fill=white] (.675,2.75) rectangle node {$\scriptstyle \nu(i,a_1,a_2)$} (2.375,3.25);
    \draw[fill=white] (1,2) circle (.25) node {$t$};
  \end{tikzpicture} \quad.
\end{equation}
Then for the other orientations of positive crossing we have:
\[
  \zest{
    \begin{tikzpicture}[line width=1, baseline=12.5, scale=.75] 
    \draw[->] (0,0) node[below] {$a_2$} \br (1,1.5) node[above] {$a_2$};
    \draw[white, line width=10] (1,0) \br (0,1.5);
    \draw[<-] (1,0) node[below] {$a_1$} \br (0,1.5) node[above] {$a_1$};
    \draw[accent] (-.5,.75) node {$i$};
  \end{tikzpicture} \,\,
 }
  \quad =
 \begin{tikzpicture}[line width=1,scale=1, baseline=1cm, slblue]
  \draw (1,0) node[below] {$\scriptstyle \lambda(i,a_2^{-1})$}--(1,1.5) to [out=90, in=-90] (0,3) node[above] { $\scriptstyle \lambda(i,a_1)$};
  \draw (2,0) node[below] {$\scriptstyle \phantom{a_2^{-1}} \lambda(ia_2^{-1},a_1)$}--(2,3) node[above] {$\scriptstyle \phantom{a}\lambda(ia_1,a_2^{-1})$};
   \draw[fill=white] (.75,.5) rectangle node {$\scriptstyle \phi_i(a_1,a_2^{-1})$} (2.25,1);
  \draw[white, line width=10] (0,0) node[below] {$\scriptstyle \phantom{a_2^{-1}} \lambda(a_2)$} to [out = 90, in=-90] (1,3) node[above] {$\scriptstyle \lambda(a_2)$};
  \draw[->] (0,0) node[below] {$\scriptstyle \phantom{a_2^{-1}}\lambda(a_2)\phantom{a_2^{-1}}$} to [out = 90, in=-90] (1,3) node[above] {$\scriptstyle \lambda(a_2)$};
  \end{tikzpicture}
\]
\[
  \zest{
    \begin{tikzpicture}[line width=1, baseline=12.5, scale=.75] 
    \draw[<-] (0,0) node[below] {$a_2$} \br (1,1.5) node[above] {$a_2$};
    \draw[white, line width=10] (1,0) \br (0,1.5);
    \draw[->] (1,0) node[below] {$a_1$} \br (0,1.5) node[above] {$a_1$};
    \draw[accent] (-.5,.75) node {$i$};
  \end{tikzpicture} \,\,
 }
  \quad = \omega(a_1,a_1^{-1};a_2)^{-1}
  \begin{tikzpicture}[line width=1,scale=1, baseline=1cm, slblue]
  \draw (0,0) node[below] {$\scriptstyle \phantom{a_1^{-1}}\lambda(i,a_2)\phantom{a_1^{-1}}$} to [out=90, in=-90] (1,1.5)  -- (1,3) node[above] { $\scriptstyle \lambda(i,a_1^{-1})$};
  \draw (2,0) node[below] {$\scriptstyle \phantom{a_1^{-1}} \lambda(ia_2,a_1^{-1})$}--(2,3) node[above] {$\scriptstyle \phantom{a_1^{-1}a_1^{-1}}\lambda(ia_1^{-1},a_2)$};
   \draw[fill=white] (2.25,2) rectangle node {$\scriptstyle \phi_i(a_1^{-1},a_2)$} (.75,2.5);
  \draw[white, line width=10] (1,0) to [out = 90, in=-90] (0,3);
  \draw[->] (1,0) node[below] {$\scriptstyle \phantom{a_1^{-1}}\lambda(a_1)\phantom{a_1^{-1}}$} to [out = 90, in=-90] (0,3) node[above] {$\scriptstyle \lambda(a_1)$};
  \end{tikzpicture}
\]

\[
  \zest{
    \begin{tikzpicture}[line width=1, baseline=12.5, scale=.75] 
    \draw[->] (0,0) node[below] {$a_2$} \br (1,1.5) node[above] {$a_2$};
    \draw[white, line width=10] (1,0) \br (0,1.5);
    \draw[->] (1,0) node[below] {$a_1$} \br (0,1.5) node[above] {$a_1$};
    \draw[accent] (-.5,.75) node {$i$};
  \end{tikzpicture} \,\,
 }
  \quad = \omega(a_1,a_1^{-1};a_2)
  \begin{tikzpicture}[line width=1,scale=1, baseline=2.25cm, slblue]
  \draw (0,0) node[below] {$\scriptstyle \lambda(i,a_2^{-1})$} to [out=90, in=-90] (1,1.5)  -- (1,3) to [out=90,in=-90] (0,4.5) node[above] { $\scriptstyle \lambda(i,a_1^{-1})$};
  \draw (2,0) node[below] {$\scriptstyle \phantom{a_1^{-1}} \lambda(ia_2^{-1},a_1^{-1})$}--(2,4.5) node[above] {$\scriptstyle \phantom{a_1^{-1}}\lambda(ia_1^{-1},a_2^{-1})$};
   \draw[fill=white] (2.5,2) rectangle node {$\scriptstyle \phi_i(a_1^{-1},a_2^{-1})$} (.5,2.5);
  \draw[white, line width=10] (1,0) to [out = 90, in=-90] (0,3);
  \draw[->] (1,0) node[below] {$\scriptstyle \phantom{a_1^{-1}}\lambda(a_1)\phantom{a_1^{-1}}$} to [out = 90, in=-90] (-1,4.5) node[above] {$\scriptstyle \lambda(a_1)$};
  \draw[white, line width=10] (-1,0) to [out=90,in=-90] (1,4.5);
  \draw[->] (-1,0) node[below] {$\scriptstyle \phantom{a_1^{-1}}\lambda(a_2)\phantom{a_1^{-1}}$} to [out=90,in=-90] (1,4.5) node[above] {$\scriptstyle \lambda(a_2)$};
  \end{tikzpicture}
\]
Here we have used the same trick as in the proof of \cref{thm:factorization existence}: we can swap the sign of a crossing involving an invertible object at the cost of a scalar $\omega$.

Something similar works for cups and caps.
Using the rigid structure described in \cref{sec: zested monoidal and duals} we assign the value of  \(\zest{}\) on a left-oriented cap 
\[
  (i;) \to (i; a,a^*) = \lambda(i,a) \otimes \lambda(a,a^{-1})^* \otimes \lambda(ia,a^{-1})
\]
as in \cref{eq:coev-left-zest},
\begin{align}
   \zest{
  \begin{tikzpicture}[line width=1, scale=1.25, baseline=-.75*1.25cm]
  \draw[<-, looseness=2] (1,-1) to [out=90, in=90] (2,-1);
  \draw (1,-1) node[below] {$a$};
  \draw[accent] (0.75,-0.75) node[left] {$i$};
  \end{tikzpicture}\quad
  }
  \quad = \quad \frac{1}{\dim(\lambda(a))}
  \begin{tikzpicture}[line width=1, slblue, scale = 1.25, baseline = -1.875cm]
  \draw[] (0,-3.1) node[below] {$\scriptstyle \phantom{a^{-1}}\lambda(i,a)\phantom{a^{-1}}$}\br (1, -1.6);
  \draw[white, line width=10] (1,-3.1) node[below] {$\scriptstyle \lambda(a,a^{-1})^*$} \br (0, -1.6);
  \draw[] (1,-3.1) \br (0, -1.6);
  \draw (0, -1.6)--(0,-1);
  \draw (.875,-3.1) node[below] {$\scriptstyle \lambda(a,a^{-1})^*$};
  \draw[fill=white] (.75,-1.6) rectangle node {$\scriptstyle \nu(i,a,a^{-1})^{-1}$} (2.25,-1);
  \draw[] (2,-3.1) node[below] {$\scriptstyle \lambda(ia,a^{-1})$}--(2,-1.6);
   \draw[looseness=2] (0,-1) to [out=90, in=90] (1,-1);
  \end{tikzpicture}
  \label{eq:coev-left-zest}
\end{align}
 while the value of the left-oriented cup
\[
  (i; a^*, a) = \lambda(a, a^{-1})^* \otimes \lambda(i, a^{-1}) \otimes \lambda(ia^{-1}, a) \to (i;)
\]
is given by \cref{eq:ev-left-zest}.
\begin{align}
     \zest{
    \begin{tikzpicture}[line width=1, scale=1.25, baseline=-.25*1.25cm]
    \draw[<-, looseness=2] (1,0) to [out=-90, in=-90] (2,0) node[above] {$a$};
    \draw[accent] (0.75,-0.25) node[left] {$i$};
    \end{tikzpicture}\quad
    }
    \quad = \quad 
    \begin{tikzpicture}[line width=1, slblue, scale = 1.25, baseline = -1.875cm]
    \draw[looseness=2] (1,0) node[above] {$\scriptstyle \lambda(i,a^{-1})$}--(1,-1.5) to [out=-90, in=-90] (0,-1.5)--(0,0) node[above] {$\scriptstyle \lambda(a,a^{-1})^*$};
    \draw (2,0) node[above] {$\scriptstyle \lambda(ia^{-1},a)$}--(2,-.75);
    \draw[fill=white] (.75,-1) rectangle node {$\scriptstyle \nu(i,a^{-1},a)$} (2.25,-.4);
     \draw[fill=white] (.625,-1.75) rectangle node {$\scriptstyle \nu(a)^{-1}$} (1.35,-1.25);
    \end{tikzpicture}
  \label{eq:ev-left-zest}
\end{align}

The right-oriented cap is the map
\[
  (i;) \to (i;a^*, a) = \lambda(a,a^{-1})^* \otimes \lambda(i,a^{-1}) \otimes \lambda(ia^{-1}, a)
\]
given by \cref{eq:coev-right-zest},
\begin{align}
\label{eq:coev-right-zest}
    \zest{
    \begin{tikzpicture}[line width=1, scale=1.25, baseline=-.75*1.25cm]
    \draw[->, looseness=2] (1,-1) to [out=90, in=90] (2,-1);
    \draw (2,-1) node[below] {$a$};
    \draw[accent] (0.75,-0.75) node[left] {$i$};
    \end{tikzpicture}\quad
    }
    \quad = \quad \frac{1}{\dim(\lambda(a)) \epsilon(a)}
    \begin{tikzpicture}[line width=1, slblue, scale = 1.25, baseline = -1.875cm]
    \draw[looseness=2] (0,-2.5) node[below] {$\scriptstyle \lambda(a,a^{-1})^*$}--(0,-1) to [out=90, in=90] (1,-1)--(1,-2.5)node[below] {$\scriptstyle \lambda(i,a^{-1})$};
    \draw[] (2,-1.75)--(2,-2.5) node[below] {$\scriptstyle \lambda(ia^{-1},a)$};
    \draw[fill=white] (.75,-2.1) rectangle node {$\scriptstyle \nu(i,a^{-1},a)^{-1}$} (2.25,-1.5);
     \draw[fill=white] (.75,-1.1) rectangle node {$\scriptstyle \nu(a)$} (1.25,-.6);
    \end{tikzpicture}
\end{align}
while the right-oriented cup is the map
\[
  (i; a, a^*) = \lambda(i, a) \otimes \lambda(a, a^{-1})^* \otimes \lambda(ia, a^{-1}) \to (i;)
\]
shown in \cref{eq:ev-right-zest}.
\begin{align}
    \zest{
    \begin{tikzpicture}[line width=1, scale=1.25, baseline=-.25*1.25cm]
    \draw[->, looseness=2] (1,0) node[above] {$a$} to [out=-90, in=-90] (2,0);
    \draw[accent] (0.75,-0.25) node[left] {$i$};
    \end{tikzpicture}\quad
    }
    \quad = \quad \epsilon(a)
    \begin{tikzpicture}[line width=1, slblue, scale = 1.25, baseline = -1.875cm]
    \draw (1,-1.5) \br (0, 0) node[above] {$\scriptstyle \lambda(i,a)$};
    \draw[white, line width=10] (0,-1.5) \br (1,0);
    \draw[] (0,-1.5 ) \br (1,0);
    \draw (.875,0) node[above] {$\scriptstyle \lambda(a,a^{-1})^*$};
    \draw[fill=white] (.75,-2.1) rectangle node {$\scriptstyle \nu(i,a,a^{-1})$} (2.25,-1.5);
    \draw (2,-1.5)--(2,0) node[above] {$\scriptstyle \lambda(ia,a^{-1})$};
    \draw[looseness=2] (0,-1.5)--(0,-2.1) to [out=-90,in=-90] (1,-2.1);
    \end{tikzpicture}
  \label{eq:ev-right-zest}
\end{align}

Finally, it is easy to see that
\begin{align}
\label{eq: zested twist}
  \zest{ 
	\begin{tikzpicture}[line width=1,baseline=1cm]
	\draw (0,0) node[below] {$a$}-- (0,1) to [out=90,in=180] (.25,1.25) to [out=0,in=90] (.5,1) to [out=-90,in=0] (.25,.75);
	\draw[white,line width=10] (.25,.75) to [out=180,in=-90] (0,1)--(0,2);
	\draw (.25,.75) to [out=180,in=-90] (0,1)--(0,2) node[above] {$a$};
	\draw[accent] (-.25,1) node[left] {$i$};
	\end{tikzpicture}
}
  =
   t(a) \epsilon(a) 
    \begin{tikzpicture}[line width=1,baseline=1cm,slblue]
	\draw (0,0) node[below] {$ \scriptstyle \lambda(i,a)$} -- (0,2) node[above] {$ \scriptstyle \lambda(i,a)$};
	\end{tikzpicture}
\end{align}
regardless of the value of the region color \(\textcolor{accent}{i}\).

\begin{definition}
  Let \(T\) be an \(A\)-colored tangle and \(i \in A\) a base color.
  We define \(\zest{i;T}\) to take the values in \cref{eq:zest object rules,eq:zested braiding,eq:coev-right-zest,eq:ev-right-zest,eq:coev-left-zest,eq:ev-left-zest} on tangle generators and then extend it to all \(A\)-colored tangles by
  \begin{align*}
    \zest{i, T' \circ T} &= \zest{i, T'} \circ \zest{i, T}
    \\
    \zest{i, T_1 \du T_2} &= \zest{i, T_1} \circ \zest{i|T_1|, T_2}
  \end{align*}
\end{definition}

We now have a procedure to compute \(\zest{i,T}\) for any \(A\)-colored tangle \(T\):
choose a diagram \(D\) of \(T\) in Morse position (i.e.\@ decompose \(T\) into a product of disjoint unions of tangle generators) and then compose.
We give an example in \cref{sec:zest invariant examples}.
In the next subsection we show that the resulting morphism does not depend on the choice of diagram and satisfies \cref{thm:factorization existence}.

\subsection{Properties of the zest invariant}

\begin{theorem}
  \label{thm:tangle relations}
  \(\zest{i,T}\) is a tangle invariant: it is independent of the diagram used to present \(T\).
\end{theorem}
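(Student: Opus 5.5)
Rather than checking the Reidemeister-type moves for the generators one at a time, I plan to deduce invariance from the Reshetikhin-Turaev invariant of \(\mathcal{C}^{\zeta}\) through the factorization property. The approach has three steps.

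\textbf{Step 1: the defining relation holds on generators.} Fix a diagram \(D\) of \((i,T)\) in Morse position. Choose a homogeneous \(\mathcal{C}\)-coloring \(\widetilde T\) inducing the \(A\)-coloring, which exists because the \(A\)-grading is faithful. Choose a simple object \(W\) with \(|W| = i\) and let \(S\) be the downward strand colored by \(W\). For each elementary slice \(D_k\) of \(D\) (a braid generator, cup, cap or twist placed next to identity strands), I will check that
\[
  \linkinv{S \du \widetilde D_k}{\mathcal{C}^\zeta}
  =
  \linkinv{S \du \widetilde D_k}{\mathcal{C}}
  \warp
  \zest{i; D_k}.
\]
This is exactly how the values in \cref{eq:zested braiding,eq:coev-right-zest,eq:ev-right-zest,eq:coev-left-zest,eq:ev-left-zest,eq: zested twist} were obtained. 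The crossings follow \cref{ex:not monoidal} together with \cref{thm:chi lemma} and \cref{thm:chi facts}(d) for the upward orientations. The cups and caps follow from the zested (co)evaluations of \cref{sec: zested monoidal and duals}. The twist is immediate from the definition of \(\theta^{\mathcal{C}^\zeta}\).

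The real content is the bookkeeping for the strands to the left of the generator. The left-associated parenthesization requires conjugating by associators \(\alpha^{\zeta}\). I will reduce this bookkeeping, via the relation \(\zest{i,T_1 \du T_2} = \zest{i,T_1}\otimes\zest{i|T_1|,T_2}\), to the single case in which the generator sits immediately to the right of a region colored \(j\). That is the case the generator formulas encode.

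\textbf{Step 2: pass to the whole diagram.} Using \cref{thm:warp product composition compatibility}, the slice identities compose to
\[
  \linkinv{S\du\widetilde T}{\mathcal{C}^\zeta}
  =
  \linkinv{S\du\widetilde T}{\mathcal{C}}
  \warp
  \zest{i;D},
\]
where \(\zest{i;D}\) is the morphism computed from the diagram \(D\).

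\textbf{Step 3: conclude independence of \(D\).} Both Reshetikhin-Turaev invariants in this identity are independent of the diagram. It therefore suffices to show that \(j\) is uniquely determined by \(f \warp j\) whenever \(f = \linkinv{S\du\widetilde T}{\mathcal{C}}\).

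This cancellation is the step I expect to be the main obstacle, because \(f\) need not be invertible and could even be zero. I will handle it in two ways.
\begin{itemize}
\item By choosing the colors cleverly. Since \(j\) lies in \(\Inv(\mathcal{C}_e)\), whose hom spaces are at most one-dimensional, the warp product map \(j \mapsto f\warp j\) is injective as soon as \(f \neq 0\). So it is enough to find some coloring \(\widetilde T\) with \(f\neq 0\). For an invertible coloring this holds: take \(\widetilde T\) colored by invertible objects when those realize the required degrees.
\item By closing up. If no such coloring exists, cap off \(f\) with duals on both sides to obtain a scalar that is a product of quantum dimensions of simples, which is nonzero. The same closure applied to \(\warp\) is compatible with the closures of \(j\). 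The top and bottom of \(T\) are fixed, so closing with a fixed cap/cup pattern isolates \(j\) up to a nonzero factor that does not depend on \(D\).
\end{itemize}

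If the cancellation argument turns out to be too delicate, my fallback is a direct verification. I would check the Reidemeister II and III moves, the framed Reidemeister I move, the zigzag moves and the cup/cap–crossing interchanges on the generator values. These reduce to the associative zesting condition \cref{eq:associative zesting}, the braided zesting conditions \cref{eq:braided zesting I,eq:braided zesting II}, and \cref{eq:twist zesting epsilon,eq:ribbon zesting epsilon}, with Reidemeister III being the longest computation.
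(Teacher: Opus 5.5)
Your Steps 1 and 2 are sound and amount to the paper's \cref{thm:factorization still holds}: for every Morse diagram $D$ of $T$ one has $\linkinv{S\du\widetilde T}{\mathcal{C}^{\zeta}}=\linkinv{S\du\widetilde T}{\mathcal{C}}\warp\zest{i;D}$. The gap is in Step 3, where you try to cancel $f=\linkinv{S\du\widetilde T}{\mathcal{C}}$ for an \emph{arbitrary} tangle.

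Your injectivity observation is correct. Morphisms between tensor products of invertible objects form a space of dimension at most one, so $j\mapsto f\warp j$ is injective once $f\neq 0$. The problem is that there need not be any homogeneous coloring with $f\neq 0$. Take an Ising category. Its universal grading group is $\ZZ/2\ZZ$, the odd component contains only $\sigma$ (so no invertible object has odd degree), and the Hopf link colored by $\sigma,\sigma$ has invariant $s_{\sigma,\sigma}=0$. Now take the Hopf link with both components colored by the nontrivial element of $\ZZ/2\ZZ$. Every admissible $\widetilde T$ is colored by sums of copies of $\sigma$, so $f=0$, and the equation $f\warp\zest{i;D}=f\warp\zest{i;D'}$ says nothing.

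Your closing-up bullet does not repair this. Capping off $\linkinv{\widetilde T}{\mathcal{C}}$ produces the Reshetikhin--Turaev invariant of the closed-up link. That is not a product of quantum dimensions and can vanish. For a link there is nothing to close at all.

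The missing idea is to do the cancellation only \emph{locally}, on tangles for which $f$ is an identity morphism. Any two Morse diagrams of $T$ are related by finitely many local moves. Each move replaces a shadow-colored relator $(j,R)$ by an identity tangle, where $R$ is a framed Reidemeister move, a zigzag, a cup/cap--crossing interchange, and so on. For such $R$, both $\linkinv{S\du\widetilde R}{\mathcal{C}}$ and $\linkinv{S\du\widetilde R}{\mathcal{C}^{\zeta}}$ are identity morphisms, because $\mathcal{C}$ and $\mathcal{C}^{\zeta}$ are ribbon. So $\id=\id\warp\zest{j,R}$ forces $\zest{j,R}=\id$ for every base color $j$; this is \cref{thm:identities preserved}. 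Since $\zest{}$ is assembled from vertical composition and the region-shifted disjoint union, you can then apply the generators-and-relations presentation of framed oriented tangles \cite[Theorem XII.2.2]{Kassel1995}, extended to shadow colorings. That gives diagram independence.

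Your direct-verification fallback is a legitimate alternative route. The paper mentions it in a remark, and it would avoid any reference to a background category $\mathcal{C}$. As written, however, it is only an outline of a long computation, not a proof.
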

 
It is clear that the usual generators and relations for the category of oriented ribbon tangles become generators for \(\tangcat\) by allowing arbitrary shadow colorings by \(A\).
Thus we can use the standard argument to prove independence of the choice of diagram:
we show that the value of \(\zest{}\) is invariant under colored, oriented, framed Reidemeister moves. 
To do so we need two lemmas.

\begin{lemma}
  \label{thm:factorization still holds}
  Let \(S\) be a tangle with a single downward-oriented strand colored by a homogeneous object \(W\) with \(|W| = i\), which exists because the grading of \(\mathcal{C}\) is faithful.
  Let \(T\) be any homogeneous \(\mathcal{C}\)-colored tangle \(T\).
  Then
  \[
    \zestmini{S \du T}
    =
    \zest{i,T}
  \]
  where \(\zest{i,T}\) is the value on the underlying \(A\)-colored tangle with leftmost shadow color \(i\).
  In particular \(\zest{}\) satisfies \cref{thm:factorization existence} in the sense that
  \[
    \linkinv{S \du T}{\mathcal{C}^{\zeta}}
    =
    \linkinv{S \du T}{\mathcal{C}}
    \warp
    \zest{i,T}.
    \qedhere
  \]
\end{lemma}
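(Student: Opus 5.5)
The plan is to reduce the statement to the elementary generators of \(\tangcat\), using the same strategy as the proof of \cref{thm:factorization existence}.

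First, choose a Morse decomposition \(T = T_k \circ \cdots \circ T_1\). Each slice \(T_r\) is a disjoint union of identity strands with a single elementary generator: a crossing of either sign and any orientation, a cup, a cap, or a twist. The slice \(S \du T\) then decomposes as \((S \du T_k) \circ \cdots \circ (S \du T_1)\).

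Both sides respect this decomposition. On the left, \(\zestmini{}\) is multiplicative under composition by \cref{thm:warp product composition compatibility}. Moreover, \(\linkinv{-}{\mathcal{C}^\zeta}\) and \(\linkinv{-}{\mathcal{C}}\) are functors, and the factorization \(\linkinv{-}{\mathcal{C}^{\zeta}} = \linkinv{-}{\mathcal{C}} \warp \zestmini{-}\) determines \(\zestmini{}\) uniquely once the \(\mathcal{C}\)-coloring is fixed. This uses the fact that the blue strands are invertible, so one can cancel \(\linkinv{S\du T}{\mathcal{C}}\) by pairing against a nonzero vector, choosing colors with nonzero invariant if needed. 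On the right, \(\zest{i,-}\) is multiplicative under composition by definition. It therefore suffices to prove \(\zestmini{S \du G} = \zest{i, G}\) for each slice \(G\).

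Next, write a slice as \(G = I_{1} \du E \du I_{2}\), where \(E\) is an elementary generator and \(I_1, I_2\) are identity tangles. Let \(c = i|I_1|\) be the region color immediately to the left of \(E\). Under the left-association convention \eqref{eq:left associated zest tensor}, the zested source and target factor as follows:
\begin{itemize}
  \item the \(\lambda\)'s attached to \(W\) and to the strands of \(I_1\) are unchanged;
  \item the strands of \(E\) carry \(\lambda(c, a_1)\otimes\lambda(ca_1,a_2)\) (or the dual variants for upward strands);
  \item the strands of \(I_2\) carry \(\lambda\)'s whose first arguments are \(c|E|\cdot(\ldots)\), and these are unchanged because \(|E|\) is preserved.
\end{itemize}
The only nontrivial step is re-associating \(\bigl(\cdots((W\zestts X_1)\cdots)\zestts X_{r}\bigr) \zestts (E\text{-strands})\) so that \(E\) acts on a single object of degree \(c\). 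This re-association is done by the associators \(\alpha^\zeta\), whose blue parts are exactly \(\nu(c,a_1,a_2)^{\pm1}\) together with braidings of \(\lambda\)'s past black strands.

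Sliding the blue strands to the right beneath the black strands, as in \cref{ex:not monoidal} and the proof of \cref{thm:factorization existence}, then yields \(\id \otimes \zest{c;E}\otimes \id\). For each generator type this must be checked against the formulas in \cref{eq:zested braiding,eq:coev-left-zest,eq:ev-left-zest,eq:coev-right-zest,eq:ev-right-zest,eq: zested twist}:
\begin{itemize}
  \item For downward crossings, the computation was already done in \cref{ex:not monoidal}.
  \item For crossings with upward strands, we additionally braid \(\lambda(a)^*\) past black strands. The resulting scalar \(\omega(a_1,a_1^{-1};a_2)^{\pm1}\) is obtained from \cref{thm:chi lemma} and \cref{thm:chi facts}(d), exactly as at the end of the proof of \cref{thm:factorization existence}.
  \item For cups and caps, use the zested (co)evaluations from \cref{sec: zested monoidal and duals}, including the pivotal factors \(\epsilon(a)\) and \(\dim(\lambda(a))^{-1}\). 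Conjugating by \(\nu(c,a,a^{-1})^{\pm1}\) or \(\nu(c,a^{-1},a)^{\pm1}\) re-associates the pair into a single degree-\(e\) object adjacent to degree \(c\); since \(\lambda(c,e)=\tu\), this produces the stated pictures.
  \item For the twist, the factor \(t(a)\epsilon(a)\) is scalar and independent of \(c\).
\end{itemize}

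Finally, the identity pieces \(\zest{i;I_1}\) and \(\zest{c|E|;I_2}\) contribute identities. Combining these with the horizontal composition rule \(\zest{i,T_1\du T_2}=\zest{i,T_1}\otimes\zest{i|T_1|,T_2}\) gives \(\zestmini{S\du G}=\zest{i,G}\). The final displayed identity then follows immediately from \cref{thm:factorization existence} applied to \(S \du T\).

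The main obstacle is the re-association step. One must check that the associators \(\alpha^\zeta\) needed to pass between the fully left-associated form and the form \(((\ldots)\zestts E\text{-strands})\) cancel on the untouched strands \(I_1\) and \(I_2\). The tool for this is naturality of \(\alpha^\zeta\), which holds because \(\mathcal{C}^\zeta\) is monoidal by \cite{Delaney2020}. Naturality removes the conjugating associators on \(I_2\), leaving only the local \(\nu(c,\cdot,\cdot)\) factors. The upward-strand case also requires bookkeeping of the extra \(\lambda(a)^*\) factor, so I would do the cup/cap computations explicitly first.
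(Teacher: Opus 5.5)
Your proposal follows the paper's proof. Both reduce, via \cref{thm:warp product composition compatibility}, to Morse slices consisting of one generator padded by identity strands. Both then observe that the slice values of \(\zestmini{}\) produced in the proof of \cref{thm:factorization existence} are, by design, exactly the generator values defining \(\zest{}\) at the local region color. The paper calls this immediate; your per-generator checklist just spells it out.

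Two corrections.

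\begin{enumerate}
\item The multiplicativity of \(\zestmini{}\) under composition should be taken from its construction in that proof, where it is defined as the composite of the slice values. It should not be derived from uniqueness of the factorization. That uniqueness is false in general, because \(\linkinv{S \du T}{\mathcal{C}}\) can vanish for every coloring of the given degrees, and then any \(\zestmini{}\) satisfies the factorization. For example, take a Hopf link in the Ising category with both components in odd degree: its only possible coloring is by \(\sigma\), and its invariant is \(0\).
\item The re-association on \(I_2\) is not a real obstacle, and no naturality of \(\alpha^{\zeta}\) is needed there. If \(g\) is the local morphism, then \(g \zestts \id_X = g \otimes \id_X \otimes \id_{\lambda(\cdot,\cdot)}\), with the same \(\lambda\) on source and target because \(g\) preserves degree. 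So only the local \(\nu(c,\cdot,\cdot)^{\pm 1}\) factors appear.
\end{enumerate}
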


\begin{proof}
  Because the warp product respects vertical composition it suffices to show the factorization for the tangle generators with parallel vertical strands used in the proof of \cref{thm:factorization existence}.
  This immediately follows from the definition of \(\zest{}\).
  For example, recall the braid generator \(B_i\) from the proof of \cref{thm:factorization existence}.
  As an \(A\)-colored tangle we have
  \[
    (e, B_i) =
    (e, S_{a_1})
    \du
    (a_1, S_{a_2})
    \du
    \cdots
    \du
    (a_{1} \cdots a_{i-1}, \beta_{a_i, a_{i+1}})
    \du
    \cdots
    \du
    (a_1 \cdots a_{n-1}, S_{a_n})
  \]
  where \(S_a\) is a single downward-oriented strand colored by \(a\) and \(\beta_{a,b}\) is a braid generator with strands colored by \(a\) and \(b\). (Here we are recycling the notation $\beta$ for the braiding in the tangle category.)
  It is clear that
  \begin{align*}
    &\zest{e, B_i}
    \\
    &=
    \zest{e, S_{a_1}}
    \circ
    \zest{a_1, S_{a_2}}
    \circ
    \cdots
    \circ
    \zest{a_{1} \cdots a_{i-1}, \beta_{a_i, a_{i+1}}}
    \circ
    \cdots
    \circ
    \zest{a_1 \cdots a_{n-1}, S_{a_n}}
    \\
    &=
    \zestmini{B_i}
  \end{align*}
  and the same analysis works for the other generators.
\end{proof}

\begin{lemma}
  \label{thm:identities preserved}
  Let \(T\) be a homogeneous \(A\)-colored tangle that is framed isotopic to an identity tangle.
  Then \( \zest{i,T} \) is an identity map.
\end{lemma}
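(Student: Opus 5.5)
The plan is to reduce the statement to the ordinary Reshetikhin--Turaev functor for \(\mathcal{C}\) and \(\mathcal{C}^{\zeta}\) using \cref{thm:factorization still holds}, and then cancel the \(\mathcal{C}\)-part of the warp product. First I choose a homogeneous \(\mathcal{C}\)-coloring \(\widetilde T\) of \(T\) by \emph{invertible}-free but simple objects: for each component of color \(a\) pick a simple object \(X\) with \(|X| = a\), which exists since the grading is faithful. I also pick a simple \(W\) with \(|W|=i\) and let \(S\) be the downward strand colored by \(W\). Since \(T\) is framed isotopic to an identity tangle, so is \(S \du \widetilde T\), and the Reshetikhin--Turaev functors are tangle invariants. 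Hence \(\linkinv{S\du \widetilde T}{\mathcal{C}} = \id\) and \(\linkinv{S\du \widetilde T}{\mathcal{C}^{\zeta}} = \id\), the latter being the identity of the left-associated zested tensor product, which by \cref{eq:left associated zest tensor} (and its analogue with the dual objects \eqref{eq:zested dual} for upward strands) is exactly the interleaved object on which \(\id \warp \id\) acts.

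Applying \cref{thm:factorization still holds} then gives
\[
  \id = \linkinv{S \du \widetilde T}{\mathcal{C}^{\zeta}} = \id \warp \zest{i,T}.
\]
The second step is to conclude \(\zest{i,T} = \id\). Unwinding the definition of the warp product, \(\id \warp j\) is obtained from \(\id_{W\otimes X_1\otimes\cdots}\otimes j\) by conjugating with an invertible braiding isomorphism that moves the blue strands past the black ones; so \(\id\warp j = \id\) forces \(\id_{W\otimes X_1 \otimes \cdots} \otimes j = \id\). Since the black object \(Y = W\otimes X_1\otimes\cdots\) is nonzero, taking a partial trace over \(Y\) (or tensoring with a morphism \(Y \to \tu \to Y^{*}\otimes\)-type map, e.g.\ closing off \(Y\) with evaluation/coevaluation) yields \(\dim(Y)\, j = \dim(Y)\,\id\). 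Choosing \(W\) and the \(X_k\) simple gives \(\dim(Y) = \dim(W)\prod \dim(X_k) \neq 0\) in a ribbon fusion category over characteristic \(0\), so \(j = \id\).

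The main obstacle is bookkeeping for upward-oriented strands and the matching of objects: I must check that the source and target of \(\linkinv{S\du\widetilde T}{\mathcal{C}^{\zeta}}\) really coincide as interleaved objects with those used in \cref{thm:factorization still holds}, including the extra \(\lambda(a)^{*}\) factors from \eqref{eq:zested dual} and the corresponding rule \eqref{eq:zest object rules} for \(\zest{i;a^*}\). Since the source and target 1-morphisms of \(T\) agree (it is isotopic to an identity), both sides live in \(\operatorname{End}\) of the same object and the partial-trace cancellation is legitimate; I expect the only real work is confirming that the braiding used to define \(\warp\) is the same on source and target so that it cancels, which holds because the source and target factorizations are identical.
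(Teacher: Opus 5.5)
Your proposal is correct and is essentially the paper's proof: you lift \(T\) to a homogeneous \(\mathcal{C}\)-coloring \(\widetilde T\), add the \(W\)-colored strand \(S\), use that both \(\linkinv{S \du \widetilde T}{\mathcal{C}}\) and \(\linkinv{S \du \widetilde T}{\mathcal{C}^{\zeta}}\) are identities (the latter because \(\mathcal{C}^{\zeta}\) is ribbon), and then invoke \cref{thm:factorization still holds}. The only difference is that you spell out the cancellation step that the paper compresses into ``which is only possible if'': you undo the two mutually inverse halves of the warp braiding and take a partial trace over the nonzero-dimensional black object (alternatively, \(\zest{i,T}\) is an endomorphism of an invertible object, hence a scalar, which must be \(1\)), a welcome but minor addition.
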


\begin{proof}
  As in \cref{thm:factorization still holds} let \(S\) be a tangle with a single downward-oriented strand colored by a homogeneous object \(W\) with \(|W| = i\).
  Similarly choose a homogeneous \(\mathcal{C}\)-coloring \(\tilde{T}\) inducing the \(A\)-coloring of \(T\).
  By hypothesis \(\mathcal{C}\) is a ribbon category, and one of the main results of \cite{Delaney2020} is that \(\mathcal{C}^{\zeta}\) is a ribbon category as well.
  The Reshetikhin-Turaev construction then says that, because \(S \du \tilde{T}\) is isotopic to an identity tangle,
  \[
    \linkinv{S \du \tilde{T}}{\mathcal{C}} = \id_{W \otimes X_1 \otimes \cdots \otimes X_n}
  \]
  and similarly \(\linkinv{S \du \tilde{T}}{\mathcal{C}^{\zeta}}\) is the identity map on the object in \cref{eq:left associated zest tensor}.
  (Here we again assume all strands are oriented downwards: if not there will be some extra factors of \(\lambda\) that do not change the argument.)
  By \cref{thm:factorization still holds} we have
  \[
    \linkinv{S \du \tilde{T}}{\mathcal{C}^{\zeta}}
    =
    \linkinv{S \du \tilde{T}}{\mathcal{C}}
    \warp
    \zest{i,T}
  \]
  which is only possible if 
  \[
    \zest{i,T}
    =
    \id_{ \lambda(i, a_{1}) \otimes \lambda(i a_1, a_2) \otimes \lambda(i a_1a_2, a_3) \otimes \cdots \otimes \lambda(i a_1 a_2 \cdots a_{n-1}, a_n) }.
    \qedhere
  \]
\end{proof}

\begin{figure}
  \begin{center}
  \begin{tikzpicture}[line width=1, scale=.5, baseline=1cm]
    \draw (0,4) node[above] {$a$};
    \draw (0,0) node[below] {$a$};
    \draw[accent] (-1,2) node[left] {$i$};
    \draw (0,0) -- (0,1) to [out=90,in=180] (.25,1.25) to [out=0,in=90] (.5,1) to [out=-90,in=0] (.25,.75);
    \draw[white,line width=7] (.25,.75) to [out=180,in=-90] (0,1)--(0,2);
    \draw (.25,.75) to [out=180,in=-90] (0,1)--(0,2);
    \begin{scope}[yscale=-1,yshift=-4cm]
    \draw (0,0) -- (0,1) to [out=90,in=180] (.25,1.25) to [out=0,in=90] (.5,1) to [out=-90,in=0] (.25,.75);
    \draw[white,line width=7] (.25,.75) to [out=180,in=-90] (0,1)--(0,2);
    \draw (.25,.75) to [out=180,in=-90] (0,1)--(0,2);
    \end{scope}
  \end{tikzpicture}
  \qquad
  \qquad
  \begin{tikzpicture}[line width=1, scale=.5, baseline=.75cm]
    \draw (0,3) node[above] {$a$};
    \draw (1,3) node[above] {$b$};
    \draw (0,0) node[below] {$\phantom{b}a\phantom{b}$};
    \draw (1,0) node[below] {$b$};
    \draw[accent] (-1,1.5) node[] {$i$};
    \begin{scope}[xscale=-1,xshift=-1cm]
    \draw (0,0) \br (1,1.5);
    \draw[white, line width=10] (1,0) \br (0,1.5);
    \draw (1,0) \br (0,1.5);
    \begin{scope}[yshift=1.5cm]
    \draw (1,0) \br (0,1.5);
    \draw[white, line width=10] (0,0) \br (1,1.5);
    \draw (0,0) \br (1,1.5);
    \end{scope}
    \end{scope}
  \end{tikzpicture}
  \qquad
  \qquad
  \begin{tikzpicture}[line width=1, scale=.5,baseline=2.25cm] 
    \draw (0,0) \br (2,3);
    \draw[white, line width=10] (1,0) \br (0,1.5);
    \draw (1,0) \br (0,1.5);
    \draw (2,0) --(2,1.5);
    \begin{scope}[yshift=3cm]
    \draw (0,0) \br (1,1.5);
    \draw (2,0) --(2,1.5);
    \end{scope}
    \begin{scope}[yshift=1.5cm]
    \draw (0,0)--(0,1.5);
    \draw[white, line width=10] (2,0) \br (0,3);
    \draw (2,0) \br (0,3);
    \end{scope}
    \begin{scope}[yscale=-1,yshift=-9cm]
    \draw (0,0)  \br (2,3);
    \draw[white, line width=10] (1,0) \br (0,1.5);
    \draw (1,0)  \br (0,1.5);
    \draw (2,0) --(2,1.5);
    \begin{scope}[yshift=3cm]
    \draw (0,0) \br (1,1.5);
    \draw (2,0) --(2,1.5);
    \end{scope}
    \begin{scope}[yshift=1.5cm]
    \draw (0,0)--(0,1.5);
    \draw[white, line width=10] (2,0) \br (0,3);
    \draw (2,0) \br (0,3);
    \end{scope}
    \end{scope}
    \draw (0,0) node[below] {$\phantom{b}a\phantom{b}$};
    \draw (1,0) node[below] {$b$};
    \draw (2,0) node[below] {$\phantom{b}c\phantom{b}$};
    \draw (0,9) node[above] {$a$};
    \draw (1,9) node[above] {$b$};
    \draw (2,9) node[above] {$c$};
    \draw[accent] (-1,4.5) node[] {$i$};
  \end{tikzpicture}
   \end{center}
  \caption{Colored, oriented, framed Reidemeister moves as relators}
  \label{fig:Reidemeister-relators}
\end{figure}

\begin{proof}[Proof of \cref{thm:tangle relations}]
  It is convenient to express the relations between the tangle generators as relators: we give a list of diagrams on which \(\zest{}\) is required to be the identity map.
  For example, the usual Reidemeister moves are given as relators in \cref{fig:Reidemeister-relators}.
  Since we are working with oriented tangles we need additional relations for critical points and other orientations of braidings.
  Let \(\mathfrak{R}\) be the set of shadow colorings \((i,R)\) of any relator diagram \(R\) appearing in \cref{fig:Reidemeister-relators} or in Figures 2.2, 2.3, 2.4, 2.8, or 2.8 of \cite[Chapter XII]{Kassel1995}. %
  
  A straightforward extension of \cite[Theorem XII.2.2]{Kassel1995} to \(\tangcat\) reduces the claim to showing that \(\zest{i, R}\) is an identity map for every \((i, R) \in \mathfrak{R}\).
  Since by definition every \(R\) is isotopic to an identity tangle we can apply \cref{thm:identities preserved}.
\end{proof}

\begin{remark}
  In principle one could repeat the computations in \cite{Delaney2020} and independently show that each \(\zest{i,R}\) is an identity map.
  This would give a proof that \(\zest{}\) is well-defined without reference to a background category \(\mathcal{C}\).
\end{remark}

\begin{theorem}
  \label{thm:base color independence}
  Let \(L\) be an \(A\)-colored link.
  Then unlike for tangles \(\zest{i,L}\) is independent of the choice of base color \(i\), so it gives an invariant of the underlying \(A\)-colored link.
\end{theorem}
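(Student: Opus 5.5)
The plan is to reduce the comparison of base colors to the tangle invariance of \cref{thm:tangle relations}, applied to a tangle in which a single extra strand carries the base color. Since \(L\) has no endpoints, its total color is \(|L| = e\). Hence for every \(i \in A\) the value \(\zest{i,L}\) is an endomorphism of the empty \(1\)-morphism, i.e.\ of \(\tu\), so it is a scalar. It suffices to show \(\zest{i,L} = \zest{e,L}\) for all \(i\).

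Fix \(i\) and let \(S_i\) be a single downward-oriented strand colored by \(i\), viewed as an \(A\)-colored tangle. I will compare the two disjoint unions \(S_i \du L\) and \(L \du S_i\), both with base color \(e\).
\begin{itemize}
\item By the defining rule for horizontal composition, \(\zest{e, S_i \du L} = \zest{e,S_i} \circ \zest{i,L}\), using the paper's order of composition. Since \(\zest{e,S_i}\) is the identity on \(\zest{e;i} = \lambda(e,i) = \tu\), this gives \(\zest{e, S_i \du L} = \zest{i,L}\) as a scalar.
\item Similarly, \(\zest{e, L \du S_i} = \zest{e,L} \circ \zest{e|L|, S_i} = \zest{e,L} \cdot \id_{\lambda(e,i)} = \zest{e,L}\), using \(|L| = e\).
\end{itemize}

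It remains to see that \(S_i \du L\) and \(L \du S_i\) are isotopic as \(A\)-colored framed tangles. Both have the same boundary, namely one point colored \(i\) at the top and at the bottom. In \(S_i \du L\) the link \(L\) is separated from the strand by a vertical plane, so \(L\) can be translated rigidly, say behind the plane of the diagram, to the other side of \(S_i\). This translation never meets the strand and preserves framings and colors. In diagrammatic terms, dragging \(L\) underneath the vertical strand only produces Reidemeister II moves in which the strand passes over \(L\). Shadow colorings with base color \(e\) are determined by the underlying \(A\)-colored tangle, so the isotopy respects them, and \cref{thm:tangle relations} gives \(\zest{e,S_i \du L} = \zest{e,L \du S_i}\). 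Combining the two bullet points yields \(\zest{i,L} = \zest{e,L}\).

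The only step needing care is this isotopy. I do not expect a real obstacle, because it is a genuine framed isotopy rel boundary and \cref{thm:tangle relations} applies directly. I would also double-check the normalization \(\lambda(e,i) = \tu\), which ensures that \(\zest{e,S_i}\) is literally \(\id_{\tu}\) rather than some nontrivial invertible object. As a cross-check, one can view the same argument through \cref{thm:factorization still holds}: \(\zest{i,L}\) is the scalar by which \(\linkinv{S \du \tilde L}{\mathcal{C}^\zeta}\) differs from \(\linkinv{S \du \tilde L}{\mathcal{C}}\), and moving \(\tilde L\) past \(S\) changes neither Reshetikhin--Turaev invariant.
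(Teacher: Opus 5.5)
Your proposal is correct and is essentially the paper's proof. Both compare \(\zest{e, S \du L}\) with \(\zest{e, L \du S}\) via isotopy invariance (\cref{thm:tangle relations}) and then factor each side with the horizontal composition rule. Your right-hand factorization, \(\zest{e, L \du S} = \zest{e,L}\,\zest{e|L|, S} = \zest{e,L}\,\id_{\lambda(e,i)}\), is actually more accurate than the paper's display, which writes \(\zest{i,S}\) even though the region to the left of \(S\) there is colored \(e|L| = e\).
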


\begin{proof}
  Let \(S\) be a single \(i\)-colored strand.
  Then by isotopy invariance
  \[
    \zest{e, S \du L}
    = 
    \zest{e, L \du S}.
  \]
  Factoring both sides gives
  \[
    \zest{e, S \du L}
    =
    \zest{e, S }
    \zest{i, L}
    =
    \id_{\tu}
    \zest{i, L}
  \]
  and
  \[
    \zest{e, L \du S}
    =
    \zest{e, L}
    \zest{i, S}
    =
    \zest{e, L}
    \id_{\tu}
    .
    \qedhere
  \]
\end{proof}
The idea behind the proof is given below:
\[
 \zest{ 
\begin{tikzpicture}[scale=.5, baseline=0cm]
\begin{scope}[xshift=-3.5cm]
 \draw[->] (0,2) node[above] {$i$} -- (0,-2);
 \draw[accent] (0,0) node[left, xshift=-.25cm] {$e$};
\end{scope}
\path[spath/save=trefoil]
(0,2) .. controls +(2.2,0) and +(120:-2.2) ..
(210:2) .. controls +(120:2.2) and +(60:2.2) ..
(-30:2) .. controls +(60:-2.2) and +(-2.2,0) .. (0,2);
\tikzset{
  every trefoil component/.style={draw},
  spath/knot={trefoil}{15pt}{1,3,5},
}
\draw[->] (-1.85, -.6);
\draw (0,2) node[above] {$a$};
\end{tikzpicture}
}
= 
\zest{
\begin{tikzpicture}[scale=.5, baseline=0cm]
\begin{scope}[xshift=3.5cm]
 \draw[->] (0,2) node[above] {$i$} -- (0,-2);
\end{scope}
\draw[accent] (-1.75,0) node[left, xshift=-.25cm] {$e$};
\path[spath/save=trefoil]
(0,2) .. controls +(2.2,0) and +(120:-2.2) ..
(210:2) .. controls +(120:2.2) and +(60:2.2) ..
(-30:2) .. controls +(60:-2.2) and +(-2.2,0) .. (0,2);
\tikzset{
  every trefoil component/.style={draw},
  spath/knot={trefoil}{15pt}{1,3,5},
}
\draw[->] (-1.85, -.6);
\draw (0,2) node[above] {$a$};
\end{tikzpicture}
}.
\]
The tangles on both sides are isotopic so by \cref{thm:tangle relations} they have the same value under \(\zest{}\).

\begin{proposition}
  \label{thm:qdims square to one}
  The value of a closed loop with either orientation is given by  \(\epsilon\):
  \[
    \zest{
      \begin{tikzpicture}[line width=1, baseline=-.125cm, scale=.5]
        \draw (0,0) circle (.5);
        \draw[to-] (-.5,.125) --(-.5,0) node[left] {$a$} ;
      \end{tikzpicture} 
    }
    =
    \zest{
      \begin{tikzpicture}[line width=1, baseline=-.125cm, scale=.5]
        \draw (0,0) circle (.5);
        \draw[-to] (-.5,0) node[left] {$a$} --(-.5,-.125)  ;
      \end{tikzpicture} 
    }
    =
    \epsilon(a).
    \qedhere
  \]
\end{proposition}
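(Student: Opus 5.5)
By \cref{thm:base color independence} the value of a closed loop does not depend on the base color, so I would compute with base color \(i = e\). The normalizations \(\lambda(e,a) = \lambda(a,e) = \tu\) and \(\nu(e,b,c) = \id\) then strip most of the structure out of the local rules. Next I choose a Morse decomposition of each oriented loop as a cap followed by a cup. The counterclockwise loop is the left-oriented cap \eqref{eq:coev-left-zest} followed by the right-oriented cup \eqref{eq:ev-right-zest}. The clockwise loop is the right-oriented cap \eqref{eq:coev-right-zest} followed by the left-oriented cup \eqref{eq:ev-left-zest}. By \cref{thm:tangle relations} any Morse presentation is allowed.

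With \(i=e\), the left cap becomes \(\dim(\lambda(a))^{-1}\) times a coevaluation \(\tu \to \lambda(a)\otimes\lambda(a)^*\). The strands \(\lambda(e,a)=\tu\) and \(\lambda(a,a^{-1})\) appear, but the crossing with the \(\tu\)-strand is trivial and \(\nu(e,a,a^{-1})=\id\). Likewise the right cup becomes \(\epsilon(a)\) times the matching evaluation \(\lambda(a)\otimes\lambda(a)^*\to\tu\). Their composite is \(\epsilon(a)\dim(\lambda(a))^{-1}\) times a trace of \(\id_{\lambda(a)}\), which equals \(\dim(\lambda(a))\). So this orientation gives \(\epsilon(a)\). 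For the other orientation, the right cap contributes \((\epsilon(a)\dim(\lambda(a)))^{-1}\) together with \(\nu(a)\), and the left cup contributes \(\nu(a)^{-1}\). These isomorphisms cancel, leaving \((\epsilon(a)\dim(\lambda(a)))^{-1}\dim(\lambda(a)^*) = \epsilon(a)^{-1}\). Here I use \(\dim(\lambda(a)^*) = \dim(\lambda(a))\). The ribbon zesting condition \eqref{eq:ribbon zesting epsilon} then gives \(\epsilon(a)^{-1}=\epsilon(a)\).

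As a cross-check, and as a fallback if the bookkeeping gets murky, I would use \cref{thm:factorization still holds}. Color the loop by a simple \(X\) with \(|X|=a\). The zested invariant is the zested trace of \(\id_X\), which by \eqref{eq: right zested trace} equals \(\epsilon(a)\dim(\lambda(a))^{-1}\dim(X)\). The original invariant is \(\dim(X)\), and since the loop has no endpoints the warp product is just multiplication by the scalar \(\zest{e,L}\). Because \(\dim(X)\neq 0\), this forces \(\zest{}\) to equal \(\epsilon(a)\dim(\lambda(a))^{-1}\). Reconciling this with the direct computation, which is needed to land on exactly \(\epsilon(a)\), is the main obstacle. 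I have to track carefully which pivotal cups and caps appear, and whether the extra factor \(\dim(\lambda(a))^{\pm 1}=\pm1\) is genuinely absorbed by the trace of \(\lambda(a)\) computed with \(\mathcal{C}\)'s own duality. I would settle this by writing the loop explicitly as \((\ev'_X)^{\zeta}\circ\coev^{\zeta}_X\) versus \(\ev^{\zeta}_X\circ(\coev'_X)^{\zeta}\), using the formulas in \cref{sec: zested monoidal and duals}. I would then check which blue strand closes into a \(\lambda(a)\)-loop of value \(\dim(\lambda(a))\), so that it cancels the explicit normalization \(\dim(\lambda(a))^{-1}\).
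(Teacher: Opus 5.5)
Your argument is correct and is essentially the paper's proof: you use the same cap/cup pairings, get \(\epsilon(a)\) for the counterclockwise loop and \(\epsilon(a)^{-1}\) for the clockwise one, and then apply \(\epsilon(a)^2=1\). Reducing to base color \(e\) via \cref{thm:base color independence} is a harmless shortcut, since for general \(i\) the extra \(\nu(i,\cdot,\cdot)^{\pm1}\) factors and crossings cancel in pairs. The worry in your cross-check dissolves as you suspect: the blue \(\lambda(a)^*\otimes\lambda(a)\) cap of \(\coev^{\zeta}_X\) and cup of \((\ev'_X)^{\zeta}\) close into a loop of value \(\dim(\lambda(a))\), so the zested trace of \(\id_X\) is \(\epsilon(a)\dim(X)\), in agreement with your direct computation. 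The display \eqref{eq: right zested trace} drops this loop factor, while the paper later uses \(\dim_{\mathcal{C}^{\zeta}}(X)=\epsilon(|X|)\dim_{\mathcal{C}}(X)\).
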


\begin{proof}
  Composing the morphisms in \cref{eq:coev-left-zest,eq:ev-right-zest} shows the value of a counterclockwise \(a\)-colored loop is \(\epsilon(a)\) for any value of the surrounding region color \(i\).
  For a clockwise loop we compose \cref{eq:coev-right-zest,eq:ev-left-zest} and similarly obtain  \(\epsilon(a)^{-1}\).
  By hypothesis \(\epsilon(a) = \epsilon(a)^{-1}\) for a ribbon zesting.
\end{proof}

\section{Efficient computation of the zest invariant}
\label{sec:zest invariant examples}
Now we show how to use the results of the previous section to read off the zest invariant of a tangle directly from an $A$-colored diagram.

\subsection{An example that shows how to compute the zest invariant}
\begin{example}
\label{ex: whitehead}
The closure of the braid below is an orientation of the Whitehead link. 

\begin{align*}
  \begin{tikzpicture}[line width=1, baseline=.8*4.5cm, scale=.8]
    \begin{scope}[xscale=-1,xshift=-1cm]
    \draw (0,0)  \br (1,1.5) ;
    \draw[white, line width=10] (1,0) \br (0,1.5);
    \draw[%
    ] (1,0)  \br (0,1.5);
    \draw (-1,0)--(-1,1.5);
    \end{scope}
    \begin{scope}[xshift=1cm,yshift=1.5cm]
    \draw (0,0)  \br (1,1.5) ;
    \draw[white, line width=10] (1,0) \br (0,1.5);
    \draw[%
    ] (1,0)  \br (0,1.5);
    \draw (1,1.5)--(1,3);
    \draw[%
    ] (-1,0)--(-1,1.5);
    \end{scope}
    \begin{scope}[xscale=-1,xshift=-1cm,yshift=3cm]
    \draw[%
    ]  (0,0)  \br (1,1.5) ;
    \draw[white, line width=10] (1,0) \br (0,1.5);
    \draw(1,0)  \br (0,1.5);
    \draw (-1,0)--(-1,1.5);
    \end{scope}
    \begin{scope}[xshift=1cm,yshift=4.5cm]
    \draw[%
    ] (0,0)  \br (1,1.5) ;
    \draw[white, line width=10] (1,0) \br (0,1.5);
    \draw[%
    ] (1,0)  \br (0,1.5);
    \draw (-1,0)--(-1,1.5);
    \end{scope}
    \begin{scope}[xscale=-1,xshift=-1cm,yshift=6cm]
    \draw[] (0,0)  \br (1,1.5) node[above] {$a$};
    \draw[white, line width=10] (1,0) \br (0,1.5);
    \draw[%
    ] (1,0)  \br (0,1.5) node[above] {$b$};
    \draw (-1,0)--(-1,1.5) node[above] {$b$}; 
    \end{scope}
    \begin{scope}[accent]
    \draw (-1,3.75) node {$\scriptstyle i$};
    \draw (.525,7.3) node {$\scriptstyle ia$};
    \draw (.525,.2) node {$\scriptstyle ia$};
    \draw (.5,5.25) node {$\scriptstyle ib$};
    \draw (.5,2.25) node {$\scriptstyle ib$};
    \draw (1.5,6.75) node {$\scriptstyle iab$};
    \draw (1.5,3.75) node {$\scriptstyle ib^2$};
    \draw (1.5,.75) node {$\scriptstyle iab$};
    \draw (3,3.75) node {$\scriptstyle iab^2$};
    \end{scope}
  \end{tikzpicture}
\end{align*}

The value of the braid under \(\zest{}\) is an endomorphism of
\[
  \zest{i; a, b, b}
  =
 \lambda(i, a) \otimes \lambda(ia, b) \otimes \lambda(iab, b)
\]
specifically
\begin{align}
\label{eq: whitehead braid}
  \phi_{i}(b, a)^{-1} \circ 
  \phi_{ib}(b,a) \circ
  \phi_{i}(b,b)^{-1} \circ
  \phi_{ib}(a,b) \circ
  \phi_{i}(a, b)^{-1}
\end{align}
where by abuse of notation we have suppressed the tensor products with identity morphisms. 
On the closure of the braid the value of \(\zest{}\) is
\[
 \epsilon(a)\epsilon(b)^2 \times
  \phi_{i}(b, a)^{-1} \circ 
  \phi_{ib}(b,a) \circ
  \phi_{i}(b,b)^{-1} \circ
  \phi_{ib}(a,b) \circ
  \phi_{i}(a, b)^{-1}
\]
where the additional factors of \(\epsilon\) come from e.g. cupping off to the right using e.g.~\cref{eq:coev-left-zest} and \cref{eq:ev-right-zest}. 
\end{example}
Note that the value on the trace of the braid only ever differs by a sign from the value of the underlying braid.\note{
This is consistent with \cite[Equation 11]{Delaney2021} in light of \cref{sec:ribbon zesting conditions}.
}

\subsection{The numerical value of the zest invariant}
\label{sec:numerical}
To this point we have discussed the zest invariant $\zest{}$ as taking values in the 2-morphisms of \(\delooping \Inv(\mathcal{C})\)
(the 1-morphisms of \(\Inv(\mathcal{C})\)).
However, in practice one works with the associative and braided zesting data $\nu$ and $t$ concretely as \(\kkm\)-valued functions rather than morphisms, in which case the evaluation of $\zest{}$ on a tangle is a scalar in \(\kkm\).

In slightly more detail, one starts with \(\lambda: A \times A \to \Inv(\mathcal{C}_e)\) which is a symmetric 2-cocycle \emph{on the level of objects}. That is, 
\begin{equation*}
  \begin{aligned}
    \lambda(a,b) \otimes \lambda(ab,c) &= \lambda(b,c) \otimes \lambda(a,bc) \\
    \lambda(a,b) &= \lambda(b,a)
  \end{aligned}
  \quad \text{ for all } a,b, c \in A.
\end{equation*}
Then 
\[
  \nu(a,b,c) \in \End(\lambda(a,b) \otimes \lambda(ab,c))=\End(\lambda(b,c) \otimes \lambda(a,bc))
\]
and
\[
  \nu(a,b,c) = \langle \nu(a,b,c) \rangle 
  \id_{\lambda(a,b) \otimes \lambda(a,bc)}
  .
\]
Similarly, \(t(a,b) \in \End(\lambda(a,b))\) and \(t(a,b) = \langle t(a,b) \rangle \id_{\lambda(a,b)}\). By abuse of notation, we conflate \(\nu\) with \(\langle \nu \rangle\) and \(t\) with \(\langle t \rangle \). Since \(\epsilon\) was introduced as a scalar-valued function from the get-go there is no change.
Finally, by imposing the equations
\begin{equation*}
  \lambda(a,b) \otimes \lambda(c,d)
  = 
  \lambda(c,d) \otimes \lambda(a,b)
\end{equation*}
one can identify the braiding \(\beta\) on the strict subcategory $\mathcal{C}_e \cap \mathcal{C}_{pt} \simeq \mathcal{C}(B,q)$ with a scalar-valued function.
Given these identifications one views the ribbon zesting data as consisting of cochains
\begin{align*}
  \nu &\colon A \times A \times A \to \kkm  \\
  t &\colon A \times A \to \kkm\\
  \epsilon &\colon A \to \{\pm 1\}\\
\end{align*} 
satisfying the equations
\begin{align}
\label{eqs: zesting equations}
  \nu(b,c,d) \nu(a,bc,d) \nu(a,b,c) &= \nu(a,b,cd) \beta^{-1}_{\lambda(a,b),\lambda(c,d)} \nu(ab,c,d) \\
  \nu(b,c,a)t(a,bc)\nu(a,b,c) &= t(a,c)\nu(b,a,c)t(a,b)\\
  \omega(a,b;c)\nu(c,a,b)^{-1}t(ab,c)\nu(a,b,c)^{-1}&=t(a,c)\nu(a,c,b)^{-1}t(b,c) \\
  \epsilon(ab)&=\dim(\lambda(a,b)) \epsilon(a)\epsilon(b)
\end{align}
for all \(a, b, c \in A\).
These equations can essentially be read off of the associative, braided\note{
When zesting with respect to a non-universal grading of $\mathcal{C}$, there is a similar way to unpack the data of the $j$ isomorphisms, see \cref{rem:dropping j}. 
}
, and ribbon zesting conditions from \cref{def:zesting-conditions}.
The one subtlety is that the braiding on the (strict) subcategory $\mathcal{C}_e \cap \mathcal{C}_{pt} \simeq \mathcal{C}(B,q)$ is given by the bicharacter $\beta$ on an abelian group $B$ associated to a quadratic form $q$ in the usual way per the classification of pointed braided fusion categories via metric groups \cite[Section 8.4]{EGNO2015}. 

In particular the morphisms $\phi_i(a,b)^{\pm 1}$ introduced in \cref{eq: def phi} as the value of $\zest{}$ on crossings become scalar-valued functions
\[
\phi_i(a,b)= \nu(i,b,a)^{-1}t(a,b)\nu(i,a,b).
\]
The point of these identifications is that now one can compute a numerical value for \(\zest{}\) on an \(A\)-shadow-colored tangle by simply multiplying the numerical values assigned to crossings, cups, and caps. For example, we revisit \cref{ex: whitehead}:

\begin{example}
  \label{ex: whitehead numerical}
  The numerical value of the zest invariant of the braid from \cref{ex: whitehead} comes from multiplying the scalar factors of $\phi$ at each crossing, or equivalently by reading \cref{eq: whitehead braid} as a product of evaluations of scalar-valued functions. One finds that all the contributions from $\nu$ terms and all but one of the $t$ terms cancel to give
  \[
  \phi_{i}(b, a)^{-1}
  \phi_{ib}(b,a)
  \phi_{i}(b,b)^{-1}
  \phi_{ib}(a,b)
  \phi_{i}(a, b)^{-1} = t(b,b)^{-1}
  \]
  One can check that this locally-computed formula is consistent with the computation in \cite[Equation 26]{Delaney2021} after taking into account the change in notation.\note{Regrettably there is a typo in the equation in the arXiv version; compare with the factor of \(t(i,i)^{-1}\).
  }

  Taking the closure, we find the numerical value of the zest invariant of our colored, oriented Whitehead link to be
  \[
    \epsilon(a)\epsilon(b)^2 \phi_{i}(b, a)^{-1}
  \phi_{ib}(b,a)
  \phi_{i}(b,b)^{-1}
  \phi_{ib}(a,b)
  \phi_{i}(a, b)^{-1} = \epsilon(a)t(b,b)^{-1}.
  \]
  This is consistent with \cite[Equation 26]{Delaney2021}, which assumed that \(\mathcal{C}\) was unitary, in which case \(\epsilon\) is a homomorphism and \(\epsilon(ab^2)=\epsilon(a)\).
\end{example}

For links in particular, thinking of \(\zest{}\) as a numerical invariant gives a connection between our construction and the link invariants arising from quandle cohomology, specifically to \defemph{shadow rack cocycle invariants}. 

\subsection{Zest invariants and shadow rack cocycle invariants}
\label{sec:quandle invariants}
Our description of the zest invariant of framed links is quite similar in spirit to the construction of shadow rack cocycle invariants \cite{Carter2003}.
We recall a special case in \cref{sec:shadowrackcocycleinvariants}.
We then show in \cref{thm: zest invariant is a cocycle invariant} that when $\mathcal{C}$ and $\mathcal{C}^{\zeta}$ are pseudo-unitary the associated zest invariants are exactly given by certain shadow rack cocycle invariants.
This represents a new connection between invariants that arise from discrete algebraic structures and those that arise from topological quantum field theories.

\subsubsection{Shadow rack cocycle invariants from an abelian group}
\label{sec:shadowrackcocycleinvariants}
We recall \cite{zbMATH00165698} that a \defemph{rack} $X$ is a set with a binary operation $\qn \colon X \times X \to X$ such that
\begin{enumerate}
  \item for all $a,b \in X$ there is a unique $c \in X$ such that $a = c \qn b$, and
  \item for all $a,b,c \in X$, \((a \qn b) \qn c = (a \qn c) \qn (b \qn c)\).
\end{enumerate}

We can think of a rack as describing the algebraic properties of colorings of tangle diagrams:
arcs are assigned elements of \(X\) subject to a relation involving \(\qn\) at each crossing; the rule for positive crossings is in \cref{fig: shadow color diagram}.
The axioms above ensure that Reidemeister II and III moves (respectively) act bijectively on the colorings.

A \defemph{shadow rack} \((X,Y)\) is a rack \(X\) and a set \(Y\) with an action $\cdot \colon Y \times X \to Y$ satisfying
\[
(y \cdot a) \cdot b = (y \cdot b \qn a) \cdot a \text{ for all } y \in Y \text{ and } a,b \in X.
\]
This definition is designed so that colorings of a tangle diagram by \(X\) extend to colorings of its regions by \(Y\) with the colors of adjacent regions satisfying a relation at each edge.
The condition above ensures the colorings are well-defined around each crossing, as in \cref{fig: shadow color diagram}.

\begin{marginfigure}
  \begin{tikzpicture}[line width=1, baseline=12.5, scale=2] 
    \draw[<-] (0,0) node[below] {$b \qn a$} \br (1,1.5) node[above] {$b$};
    \draw[white, line width=10] (1,0) \br (0,1.5);
    \draw[<-] (1,0) node[below] {$a$} \br (0,1.5) node[above] {$a$};
    \draw (-0.25,.75) node {$y$};
    \draw (1.25,.75) node {$(y \cdot a) \cdot b$};
    \draw (0.5,1.25) node {$y \cdot a$};
    \draw (0.5,.25) node {$y \cdot (b \qn a)$};
  \end{tikzpicture}
  \caption{Shadow colorings around a crossing.}
  \label{fig: shadow color diagram}
\end{marginfigure}

\begin{example}
  \label{ex: conjugation quandle}
  Let \(G\) be a group.
  Conjugation
  \begin{equation*}
    g \qn h \defeq g h g^{-1}
  \end{equation*}
  makes \(G\) a rack.
  If \(S\) is a set with \(G\)-action \((s, g) \mapsto s \cdot g\) then \((G,S)\) is a shadow rack because
  \begin{equation*}
  (s \cdot h \qn g) \cdot g
  =
  (s \cdot g h g^{-1}) \cdot g
  =
  s \cdot gh
  =
  s \cdot g \cdot h.
  \qedhere
  \end{equation*}
\end{example}

\begin{example}
  \label{ex: abelian shadow}
  In  \cref{ex: conjugation quandle} one can take \(G = A\) to be an abelian group so \(\qn\) is trivial.
  Setting \(Y = A\) and \(a \cdot b = ab\) makes \((A,A)\) a shadow rack.
\end{example}
This simple example of a shadow rack is the one underlying our construction; shadow rack colorings of tangle diagrams by \((A,A)\) are precisely the same as the shadow colorings of \cref{def: shadow coloring}.

One can define cohomology groups for racks whose \(2\)-cocycles give link invariants \cite{Carter2003}.
There are similar cohomology groups for shadow racks \cite{Inoue2014,Cazet2022}.
We give the definitions exclusively for the shadow rack \((A,A)\) of \cref{ex: abelian shadow}.

\begin{definition}
  For the shadow rack \((A,A)\) of \cref{ex: abelian shadow} a \defemph{shadow rack \(2\)-cocycle} with values in \(\kkm\) is a function \(\omega: A \times A^2 \to \kkm\) satisfying the \defemph{shadow rack \(2\)-cocycle condition}
  \begin{equation}
    \label{eq:cocycle relation}
    \frac{
      \omega_{i}(a_2, a_3)
    }{
      \omega_{ia_1}(a_2, a_3)
    }
    \left[
      \frac{
        \omega_{i}(a_1, a_3)
      }{
        \omega_{ia_2}(a_1, a_3)
      }
    \right]^{-1}
    \frac{
      \omega_{i}(a_1, a_2)
    }{
      \omega_{ia_3}(a_1, a_2)
    }
    =
    1
    \quad\text{for all } i,a_1,a_2, a_3 \in A.
  \end{equation}
\end{definition}

Then if \(D\) is an \(A\)-colored tangle diagram, the \defemph{weight} of a crossing \(c\) of \(D\) is defined to be 
\[
  W(c, \omega) =
  \begin{cases}
    \omega_{i}(a,b) & c \text{ positive}
    \\
    \omega_{i}(b,a)^{-1} & c \text{ negative}
  \end{cases}
\]
where \(i\) is the left-hand color and \(a,b\) are the incoming colors, as shown below:

\begin{align}
\label{eq: weight rules}
  W \left (
  \begin{tikzpicture}[line width=1, baseline=12.5, scale=.75] 
    \draw[<-] (0,0) node[below] {$b$} \br (1,1.5) node[above] {$b$};
    \draw[white, line width=10] (1,0) \br (0,1.5);
    \draw[<-] (1,0) node[below] {$a$} \br (0,1.5) node[above] {$a$};
    \draw (-.5,.75) node {$i$};
  \end{tikzpicture}, \quad \omega
  \right )
  = \omega_i(a,b)
  \qquad
  W \left (
  \begin{tikzpicture}[line width=1, baseline=12.5, scale=.75] 
    \draw[<-] (1,0) node[below] {$a$} \br (0,1.5) node[above] {$a$};
    \draw[white, line width=10] (0,0) \br (1,1.5);
    \draw[<-] (0,0) node[below] {$b$} \br (1,1.5) node[above] {$b$};
    \draw (-.5,.75) node {$i$};
  \end{tikzpicture}, \quad \omega
  \right )
  = \omega_i(b,a)^{-1}
\end{align}
These assignments are also called \defemph{Boltzmann weights} \cite{Carter2003}.
(Recall that we have taken the opposite convention on crossing signs to the one in the quandle cohomology literature.)

\begin{definition}
  Let \(D\) be a shadow colored tangle diagram.
  Its \defemph{shadow rack cocycle invariant} is given by
  \[
    Z(D,\omega) \defeq \prod_{c \in D} W(c, \omega). \qedhere
  \]
\end{definition}

\begin{theorem}
  The scalar \(Z(D, \omega)\) is an invariant \(Z(L, \omega)\) of the underlying colored,%
  \note{
    When \(A\) is finite the set of \(A\)-colorings of \(L\) is also finite, so one can eliminate the dependence on the coloring by taking a sum or product of the \(Z(L)\) over all colorings.
    This approach is common in the literature, but in the context of zesting it is more natural to take the \(A\)-coloring as data.
  }
  oriented, framed link \(L\) of \(D\).
\end{theorem}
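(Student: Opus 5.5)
The plan is to check invariance of \(Z(D,\omega)\) under the oriented, framed Reidemeister moves, i.e.\ moves II and III in all orientations, together with planar isotopy. Move I is not needed because \(L\) is framed. The framed version of move I, the cancellation of a positive and a negative kink, can be written in terms of moves II and III once Morse-type moves are allowed. Since \(Z\) is a product of local weights, each move only changes the weights of the crossings it involves. In each case I will compare the products of those weights before and after the move, using the shadow coloring to identify the region labels. Because the shadow rack \((A,A)\) of \cref{ex: abelian shadow} has trivial rack operation, arc colors never change across crossings. So every crossing is determined by the left-hand region color \(i\), the two strand colors, the sign, and the orientations of the strands.

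\textbf{Move II.} The two crossings have opposite signs. I will check that they have the same left-hand region color and the same ordered pair of incoming colors. Take the braid-like version, with both strands oriented downward, say colors \(a\) over \(b\) with left region \(i\). The positive crossing then contributes \(\omega_i(a,b)\), and the negative crossing, read with the convention in \eqref{eq: weight rules}, contributes \(\omega_i(a,b)^{-1}\), so the two weights cancel. The reverse-oriented versions of move II need more care. The region to the left of the crossing, as seen from the convention for incoming strands, is the same region for both crossings, and the pair of incoming colors is the same in both. Checking this against \eqref{eq: weight rules} is a direct case inspection. If the convention for upward strands is ambiguous, I would instead reduce the non-braid-like move II to braid-like moves II and III plus planar isotopy, which is a standard reduction (Polyak's minimal generating set).

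\textbf{Move III.} This is the step where the cocycle condition \eqref{eq:cocycle relation} enters, and I expect it to be the main obstacle, mostly as sign and convention bookkeeping. Using Polyak's result, it suffices to handle a single orientation of move III, namely the all-positive braid-like move with colors \(a_1,a_2,a_3\) and base region \(i\), together with the move II variants. Reading off the regions, the left-hand colors at the three crossings on one side are \(i\), \(ia_1\) and \(i\), and on the other side they are \(ia_3\), \(i\) and \(ia_2\), possibly in a permuted assignment. The ratio of the two products of weights is then exactly the left-hand side of \eqref{eq:cocycle relation}, so it equals \(1\). The concrete work is to match the region labels in the figure with the indices in \eqref{eq:cocycle relation}. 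If the labels come out differently, I would adjust the assignment, for example replacing \(ia_3\) by the appropriate product, since \(A\) is abelian and the region colors are multiplicative.

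Finally, planar isotopy and the Morse moves (cup--cap cancellation and moving a strand past an extremum) change no crossings, so \(Z\) is unchanged by them. It follows that \(Z(D,\omega)\) depends only on the colored, oriented, framed link \(L\).
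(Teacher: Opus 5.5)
Your treatment of moves II and III matches the paper's proof. II cancels by the design of the weights in \eqref{eq: weight rules}, and III for the braid-like all-positive orientation is exactly \eqref{eq:cocycle relation}. Your guessed region labels differ from the paper's picture only by swapping $a_1 \leftrightarrow a_2$, which is harmless.

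The gap is your dismissal of the framed first move. It is not true that cancelling a positive and a negative kink follows from moves II and III plus Morse moves. Move II, move III, planar isotopy, zigzag cancellation and sliding crossings past extrema all preserve the rotation (Whitney) number of each component, so together they generate only regular isotopy. They can realize the cancellation of two kinks whose loops turn in opposite directions (the Whitney trick). They cannot realize the cancellation of a positive and a negative kink whose loops lie on the same side of the strand: that move preserves writhe, hence framing, but changes the rotation number by $\pm 2$. Framed isotopy needs this extra move, so as written you have only shown that $Z$ is a regular-isotopy invariant.

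The missing check is short. A kink on a strand of color $a$ has weight $\omega_r(a,a)^{\pm 1}$. Here $r$ is the region that becomes the left-hand region of \eqref{eq: weight rules} once the crossing is rotated so both strands point down, i.e.\ the region to the right of both passes. Take a downward $a$-strand with left region $i$:
\begin{itemize}
\item if the loop is on the picture-right, $r$ is the outer region, colored $i$;
\item if the loop is on the picture-left, $r$ is the loop interior, colored $ia^{-1}$.
\end{itemize}
So two kinks of opposite sign with loops on the same side have weights $\omega_r(a,a)$ and $\omega_r(a,a)^{-1}$, and they cancel. Alternatively, setting $a_1=a_2=a_3=a$ in \eqref{eq:cocycle relation} gives $\omega_i(a,a)=\omega_{ia}(a,a)$, so a kink's weight depends only on its sign. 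The paper's two-line proof is also silent on this move, but since the statement is about framed links it must be checked.

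Separately, your fallback for the reverse move II does not work here. Polyak's derivation of the reverse II moves from the braid-like ones uses $\Omega 1$ moves, which are not available for framed diagrams. You need the direct check, which does go through with the rotated-crossing convention. Both crossings of a reverse II use the bigon as their weighting region, have the same over/under colors, and have opposite signs.
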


\begin{proof}
  It is clear the weights are chosen so that \(Z(D, \omega)\) is invariant under the RII move by definition. Invariance under the RIII move is equivalent to the cocycle condition \eqref{eq:cocycle relation}, as one can check by comparing the assignment of weights to the braids below.
  
 \[
   Z \left (
    \begin{tikzpicture}[line width=1, scale=.75,baseline=1.6875cm] 
    \draw[<-] (0,0) \br (2,3);
    \draw[white, line width=10] (1,0) \br (0,1.5);
    \draw[<-] (1,0) \br (0,1.5);
    \draw[<-] (2,0) --(2,1.5);
    \begin{scope}[yshift=3cm]
    \draw (0,0) \br (1,1.5);
    \draw (2,0) --(2,1.5);
    \end{scope}
    \begin{scope}[yshift=1.5cm]
    \draw (0,0)--(0,1.5);
    \draw[white, line width=10] (2,0) \br (0,3);
    \draw (2,0) \br (0,3);
    \end{scope}
    \draw (0,0) node[below] {$\phantom{b}a\phantom{b}$};
    \draw (1,0) node[below] {$b$};
    \draw (2,0) node[below] {$\phantom{b}c\phantom{b}$};
    \draw (0,4.5) node[above] {$a$};
    \draw (1,4.5) node[above] {$b$};
    \draw (2,4.5) node[above] {$c$};
    \draw (-1,2.25) node[] {$i$};
    \draw (.75, 2.25) node[] {$ib$}; 
  \end{tikzpicture}
  \right ) = \omega_i(b,c) \omega_{ib}(a,c) \omega_i(a,b)
\]

\[
Z \left (
  \begin{tikzpicture}[line width=1, scale=.75, xscale = -1, baseline=1.6875cm] 
    \begin{scope}[yshift=1.5cm]
    \draw (2,0) \br (0,3);
    \draw (0,0)--(0,1.5);
    \end{scope}
    \draw[<-] (1,0) \br (0,1.5);
    \draw[<-] (2,0) --(2,1.5);
    \draw[white, line width=10] (0,0) \br (2,3);
    \draw[<-] (0,0) \br (2,3);
    \begin{scope}[yshift=3cm]
    \draw (2,0) --(2,1.5);
    \draw[white, line width=10] (0,0) \br (1,1.5);
    \draw (0,0) \br (1,1.5);
    \end{scope}
    \draw (0,0) node[below] {$\phantom{b}c\phantom{b}$};
    \draw (1,0) node[below] {$b$};
    \draw (2,0) node[below] {$\phantom{b}a\phantom{b}$};
    \draw (0,4.5) node[above] {$c$};
    \draw (1,4.5) node[above] {$b$};
    \draw (2,4.5) node[above] {$a$};
    \draw (1.5,1) node[] {$ic$};
    \draw (1.5, 3.5) node[] {$ia$}; 
    \draw (3, 2.25) node[] {$i$};
  \end{tikzpicture}
  \right ) = \omega_{ic}(a,b)\omega_i(a,c)\omega_{ia}(b,c)
\]
\end{proof}

\subsubsection{Braided zesting defines a shadow rack 2-cocycle} 
\label{sec:zesting defines rack cocycle}

Now we show that the assignment of weights to crossings defined by zesting defines a shadow rack 2-cocycle.

Consider ribbon zesting data \(\zeta=(\lambda, \nu, t, \epsilon)\) for a ribbon fusion category \(\mathcal{C}\), where $\nu$, $t$, and $\epsilon$ are viewed as scalar-valued functions as explained in \cref{sec:numerical}. Then for a fixed, $i,a,b \in A$, recall from \cref{eq:zested braiding} that a positive crossing is assigned

\[
 \zest{
    \begin{tikzpicture}[line width=1, baseline=12.5, scale=.75] 
    \draw[<-] (0,0) node[below] {$b$} \br (1,1.5) node[above] {$b$};
    \draw[white, line width=10] (1,0) \br (0,1.5);
    \draw[<-] (1,0) node[below] {$\phantom{b}a\phantom{b}$} \br (0,1.5) node[above] {$a$};
    \draw[accent] (-.5,.75) node {$i$};
  \end{tikzpicture}
  }
  =
  \phi_{i}(a, b) \defeq \nu(i,b,a)^{-1} t(a,b)\nu(i,a,b) \in \kkm.
\]
 
and a negative crossing is assigned
\[
 \zest{
    \begin{tikzpicture}[line width=1, baseline=12.5, scale=.75] 
    \draw[<-] (1,0) node[below] {$\phantom{b}a\phantom{b}$} \br (0,1.5) node[above] {$a$};
    \draw[white, line width=10] (0,0) \br (1,1.5);
    \draw[<-] (0,0) node[below] {$b$} \br (1,1.5) node[above] {$b$};
    \draw[accent] (-.5,.75) node {$i$};
  \end{tikzpicture}
  }
  =
  \phi_{i}(a, b)^{-1} \defeq \nu(i,a,b)^{-1} t(a,b)^{-1}\nu(i,b,a) \in \kkm.
\]

\begin{theorem}
\label{thm: zest invariant is rack cocycle invariant}
  The function $\phi_i(a,b): A \times A^2 \to \kkm$ is a shadow rack 2-cocycle.
\end{theorem}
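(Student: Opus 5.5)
The cleanest approach is topological rather than algebraic. The cocycle condition \eqref{eq:cocycle relation} is, by the computation in the proof that \(Z(D,\omega)\) is a link invariant, exactly the statement that the product of Boltzmann weights is the same on the two sides of a Reidemeister III move. So I will show that \(\phi\) gives equal products on the two braids \(\sigma_1\sigma_2\sigma_1\) and \(\sigma_2\sigma_1\sigma_2\), colored by \(a,b,c\) with base color \(i\) as in that proof.

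First, view \(\zest{i;\,\cdot\,}\) on the three-strand braid \(\beta_{a,b,c}\) (the left braid in the RIII picture) as a composite of three crossing values. By the rule \(\zest{i,T_1 \du T_2} = \zest{i,T_1}\circ\zest{i|T_1|,T_2}\), a crossing between strands two and three sits over a region colored \(i\) times the color of the first strand. In the scalar picture of \cref{sec:numerical}, all identity factors and the \(\lambda\)-labels disappear, so \(\zest{i,\text{left braid}}\) equals the product of the three \(\phi\)-values at the crossings. Here the region color at each crossing is read off exactly as in the Boltzmann weight \(W(c,\phi)\). Thus this product is \(\phi_i(b,c)\phi_{ib}(a,c)\phi_i(a,b)\), and likewise the right braid gives \(\phi_{ic}(a,b)\phi_i(a,c)\phi_{ia}(b,c)\). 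All crossings here are positive and downward oriented, so only \cref{eq:zested braiding} is needed and no \(\omega\) or \(\dim\) correction factors arise.

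Next, the two braids are isotopic rel boundary. By \cref{thm:tangle relations}, \(\zest{i,\cdot}\) takes the same value on them, so the two products agree. Writing this equality as a ratio, and noting that strand relabeling is free because \(a,b,c\) are arbitrary, gives \eqref{eq:cocycle relation} with \(\omega=\phi\) for all \(i,a_1,a_2,a_3\in A\). Here \(a_1,a_2,a_3\) are a relabeling of \(a,b,c\) matching the RIII move in \cref{fig:Reidemeister-relators}.

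The main obstacle is bookkeeping rather than conceptual. I must check that the region color assigned to each crossing in the functorial composite matches the left-hand region color used in \(W\). I must also check that the scalar identifications of \cref{sec:numerical} commute with composition, so that the value of a composite is the product of scalars. The second point holds because all objects involved are invertible in \(\Inv(\mathcal{C}_e)\), and endomorphisms of a fixed invertible object compose by multiplication of scalars. A backup, if the topological route seemed circular, is a direct algebraic check. I would expand each \(\phi\) into \(\nu\) and \(t\) and use the braided zesting conditions \eqref{eq:braided zesting I} and \eqref{eq:braided zesting II} together with the associative condition \eqref{eq:associative zesting}. That check would be lengthy, which is why I prefer invoking \cref{thm:tangle relations}, itself ultimately resting on \(\mathcal{C}^\zeta\) being braided.
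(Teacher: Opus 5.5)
Your argument is correct, and it is the justification the paper gives right after the statement: \eqref{eq:cocycle relation} is the Reidemeister III relator, and \cref{thm:tangle relations} (via \cref{thm:identities preserved}) forces \(\mathcal{J}_\zeta\) to send that relator to the identity. Your bookkeeping is right. The regions at the three crossings are \(i, ib, i\) for \(\sigma_1\sigma_2\sigma_1\) and \(ia, i, ic\) for \(\sigma_2\sigma_1\sigma_2\). All crossings are positive and downward, so only \eqref{eq:zested braiding} enters. With \(a_1=a, a_2=b, a_3=c\) the equality is literally \eqref{eq:cocycle relation}, and no relabeling is needed.

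One small caveat concerns your justification of the scalar reading. The factors are not a priori endomorphisms of one fixed object: for example, \(\phi_i(a,b)\) goes from \(\lambda(i,a)\otimes\lambda(ia,b)\) to \(\lambda(i,b)\otimes\lambda(ib,a)\). Reading the composite as a product of scalars therefore uses the object-level identifications of \cref{sec:numerical}. The statement itself already presupposes these.

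The proof the paper actually writes out, in \cref{sec:proof of cocycle condition}, is the direct algebraic check you kept in reserve. After expanding \(\phi\), the \(t\)-factors cancel identically. \Cref{thm:assoc-consequence} is the associative condition applied twice with \(a_2,a_3\) swapped; the braiding scalars cancel because \(\lambda(a_2,a_3)=\lambda(a_3,a_2)\). Applying it converts the three ratios \(\nu(ib,a,c)/\nu(ib,c,a)\), etc., into \(\nu\)'s with no products in the first slot. What remains is \(\nu(b,a,c)\nu(c,b,a)\nu(a,c,b)=\nu(b,c,a)\nu(a,b,c)\nu(c,a,b)\), which follows from two instances of \eqref{eq:braided zesting I}.

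The trade-off between the two routes is as follows. Your route is shorter and explains why the identity must hold. However, it inherits everything \cref{thm:tangle relations} rests on: a background ribbon category \(\mathcal{C}\) with faithful grading (to lift \(A\)-colorings and choose \(W\)), and the main theorem of \cite{Delaney2020} that \(\mathcal{C}^\zeta\) is ribbon.

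The algebraic route needs no background category. It also shows that only \eqref{eq:associative zesting} and \eqref{eq:braided zesting I} are used; your backup plan would invoke \eqref{eq:braided zesting II} unnecessarily, and \(\epsilon\) plays no role. So any scalar solution of those two equations yields a shadow rack \(2\)-cocycle. This finer accounting is what makes visible that the zesting axioms are strictly stronger than the cocycle condition, a point the paper returns to in \cref{sec: zest cohomology}.
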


This theorem is a special case of \cref{thm:tangle relations}: the rack \(2\)-cocycle condition is equivalent to checking that \(\zest{}\) maps the Reidemeister III relator to an identity map.
More constructively, in \cref{sec:proof of cocycle condition} we give a direct, but somewhat tedious proof that the shadow rack 2-cocycle condition follows from the associative zesting condition \cref{eq:associative zesting} and the braided zesting condition \cref{eq:braided zesting I}.

The data of a braided zesting defines a shadow rack 2-cocycle but in general the zest invariant of a framed link will only be equal to the shadow rack 2-cocycle invariant up to a sign.
This is because of the nontrivial contributions to the zest invariant from cups and caps, consistent with the fact that the shadow rack 2-cocycle only sees crossing data and does not involve the ribbon zesting data $\epsilon$. 
One can track the contributing factors of \(\epsilon\) from maxima and minima, but if $D$ is an $A$-colored link diagram given as the trace closure of a (downward oriented) $m$-strand braid, it is simpler to state
\[
  \zest{D} = \prod_{j=1}^m \epsilon(a_j) \prod_{c \in D} \phi_{i_c}(a_c,b_c)^{\sigma(c)} = \prod_{j=1}^m \epsilon(a_j) Z_\phi(D),
\]
where $a_j$ is the color of the $j$th strand of the underlying braid.
The following theorem is an immediate corollary of this discussion.

\begin{theorem}
\label{thm: zest invariant is a cocycle invariant}
Let \(\zeta=(\lambda,\nu,t, 1)\) be any ribbon zesting data with $\epsilon \equiv 1$. Then 
\[
  \zest{L} = Z(L, \phi),
\]
i.e.~the zest invariant of an oriented, framed, $A$-colored link $L$ is the shadow rack cocycle invariant that arises from $A$ and the Boltzmann weights \(\phi_i(a,b)\).
\end{theorem}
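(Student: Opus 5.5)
We compute \(\zest{L}\) from a convenient diagram and compare it term by term with \(Z(L,\phi)\). By Vogel's theorem (or Alexander's), choose a diagram \(D\) of \(L\) that is the trace closure of a downward-oriented \(m\)-strand braid \(\beta\), with the framing adjusted by adding kinks so that \(D\) still represents the framed link. By \cref{thm:tangle relations} and \cref{thm:base color independence} we may compute \(\zest{}\) on \(D\) with any base color \(i\). Both sides of the claimed identity are framed-link invariants: the left by the results just cited, the right by the theorem that \(Z(D,\omega)\) is invariant, applied to \(\omega=\phi\), which is a shadow rack \(2\)-cocycle by \cref{thm: zest invariant is rack cocycle invariant}. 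So it suffices to verify the equality on the single diagram \(D\).

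The first step is to evaluate \(\zest{i,\beta}\). Since \(\zest{}\) is defined on generators and extended by vertical composition and by \(\zest{i,T_1\du T_2}=\zest{i,T_1}\circ\zest{i|T_1|,T_2}\), the braid is a composite of generators \((i',\beta_{a,b})^{\pm1}\) padded by identity strands. The identity strands contribute identities, and under the scalar identification of \cref{sec:numerical} each crossing contributes \(\phi_{i_c}(a_c,b_c)^{\pm1}\). Here \(i_c\) is exactly the left region color at that crossing, matching the Boltzmann weights \eqref{eq: weight rules}. For negative crossings we use that \(\zest{}\) of the inverse braiding is the inverse morphism, by functoriality and \cref{thm:identities preserved} applied to the RII relator. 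This gives \(\zest{i,\beta}=Z_\phi(D)\) as a scalar multiple of the identity on \(\zest{i;a_1,\dots,a_m}\).

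The second step is to account for the closure. Close the strands off to the right by nested caps \eqref{eq:coev-left-zest} at the top and cups \eqref{eq:ev-right-zest} at the bottom, together with the right-hand return strands, which cross nothing. Since \(\zest{i,\beta}\) is a scalar times the identity, it slides out, and what remains is \(\zest{}\) of \(m\) nested unlinked loops with colors \(a_1,\dots,a_m\) and various region colors. Peeling off the innermost loop each time and applying \cref{thm:qdims square to one}, which holds for any surrounding region color, gives the product \(\prod_j\epsilon(a_j)\), as in \cref{ex: whitehead}. This establishes the displayed formula \(\zest{D}=\prod_j\epsilon(a_j)Z_\phi(D)\).

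Setting \(\epsilon\equiv1\) then yields \(\zest{L}=Z(L,\phi)\). The main obstacle is the closure step: we must check that pulling the scalar braid value out actually leaves a product of disjoint loops whose cup and cap contributions, including the \(\dim(\lambda(a))\) normalizations and the \(\nu\) factors, cancel pairwise. I would handle this by regarding the closure minus the braid as a tangle isotopic to nested circles and invoking \cref{thm:tangle relations} rather than computing it directly. One remaining caution is that the equality \(\dim(\lambda(a,b))=\epsilon(ab)/(\epsilon(a)\epsilon(b))=1\) is forced when \(\epsilon\equiv1\), so no stray dimension factors survive.
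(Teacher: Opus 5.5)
Your proposal is correct and follows the paper's route. The paper also treats the theorem as an immediate corollary of the braid-closure formula \(\zest{D} = \prod_{j=1}^m \epsilon(a_j)\, Z_\phi(D)\), in which crossings contribute \(\phi\) and closing off to the right contributes one \(\epsilon(a_j)\) per strand via \cref{thm:qdims square to one}, and then sets \(\epsilon \equiv 1\). Your write-up fills in details the paper leaves implicit: deriving the negative-crossing weight from the RII relator, and reducing the closure to nested unknots using \cref{thm:tangle relations}.
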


\begin{corollary}
\label{cor: pseudounitary}
Let \(\mathcal{C}\) and \(\mathcal{C}^{\zeta}\) be pseudo-unitary ribbon fusion categories related by ribbon zesting. Then the zest invariant of an \(A\)-colored framed link is equal to the shadow-rack cocycle invariant:
\[
  \zest{L} = Z_{\phi}(L). \qedhere
\]
\end{corollary}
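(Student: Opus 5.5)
The plan is to reduce \cref{cor: pseudounitary} to \cref{thm: zest invariant is a cocycle invariant}. That theorem already identifies \(\zest{L}\) with \(Z(L,\phi)\) whenever \(\epsilon \equiv 1\), so it suffices to show that pseudo-unitarity of \(\mathcal{C}\) and \(\mathcal{C}^{\zeta}\) forces the ribbon parameter \(\epsilon\) to be identically \(1\).

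First, use pseudo-unitarity of \(\mathcal{C}\). The objects \(\lambda(a,b)\) are invertible, so their Frobenius-Perron dimension is \(1\). Pseudo-unitarity then gives \(\dim(\lambda(a,b)) = 1\) for all \(a,b\in A\). The twist zesting condition \eqref{eq:twist zesting epsilon} now says \(\epsilon(ab)=\epsilon(a)\epsilon(b)\). Combined with the ribbon condition \eqref{eq:ribbon zesting epsilon}, this makes \(\epsilon\) a homomorphism \(A\to\{\pm1\}\).

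Second, use pseudo-unitarity of \(\mathcal{C}^{\zeta}\) to kill this homomorphism. Take a simple \(X_a\) of degree \(a\), which exists by faithfulness. By \eqref{eq: right zested trace} applied to \(\id_{X_a}\), its zested quantum dimension is
\[
\dim^{\zeta}(X_a)=\frac{\epsilon(a)}{\dim(\lambda(a))}\dim(X_a)=\epsilon(a)\dim(X_a).
\]
Zesting preserves the fusion rules up to tensoring with invertibles, so it preserves Frobenius-Perron dimensions: \(\operatorname{FPdim}^{\zeta}(X_a)=\operatorname{FPdim}(X_a)\). Pseudo-unitarity of both categories gives \(\dim^{\zeta}(X_a)=\operatorname{FPdim}(X_a)=\dim(X_a)>0\). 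Hence \(\epsilon(a)=1\) for every \(a\in A\). This is the claim anticipated in the note after \cref{def:zesting-conditions}.

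The main obstacle is justifying the equality of Frobenius-Perron dimensions in \(\mathcal{C}^{\zeta}\) and \(\mathcal{C}\). The fusion matrices of \(\mathcal{C}^{\zeta}\) are those of \(\mathcal{C}\) composed with permutations by the invertibles \(\lambda(a,b)\). Since \(\operatorname{FPdim}\) is the unique positive character of the fusion ring and \(\operatorname{FPdim}(\lambda(a,b))=1\), the function \(X\mapsto \operatorname{FPdim}(X)\) is still a positive character for the zested fusion rules, and so it equals \(\operatorname{FPdim}^{\zeta}\).

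With \(\epsilon\equiv1\) established, \cref{thm: zest invariant is a cocycle invariant} applies directly. Equivalently, in the braid-closure formula \(\zest{D}=\prod_j\epsilon(a_j)Z_\phi(D)\) the \(\epsilon\) prefactor disappears, giving \(\zest{L}=Z_\phi(L)\).
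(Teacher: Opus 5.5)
Your proposal is correct and follows essentially the paper's route: deduce \(\epsilon \equiv 1\) from \(\dim^{\zeta}(X_a) = \epsilon(a)\dim(X_a)\) (the paper does this via \cref{thm:ribbon condition reformulation}(c)), then invoke \cref{thm: zest invariant is a cocycle invariant}. The one difference is that the paper skips your ``main obstacle'': both \(\dim^{\zeta}(X_a)\) and \(\dim(X_a)\) are positive under pseudo-unitarity and \(\epsilon(a)\in\{\pm 1\}\), so \(\epsilon(a)=1\) follows without \(\operatorname{FPdim}^{\zeta}=\operatorname{FPdim}\), although your justification of that equality is valid.
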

\begin{proof}
  By \cref{thm:ribbon condition reformulation}(c) \(\mathcal{C}^{\zeta}\) is pseudo-unitary if and only if \(\epsilon(a) = 1\) for all \(a \in A\).
  This can be read directly from \cref{thm:qdims square to one} or from the formulae for the zested trace in \cref{eq: left zested trace,eq: right zested trace}.
\end{proof}

That is, the zest invariant coincides with the shadow rack cocycle invariant whenever the zesting relates two pseudo-unitary ribbon fusion categories, i.e.~whenever the quantum dimensions of simple objects are all positive. For most applications to physics and topological quantum computing one is only interested in \emph{unitary} modular fusion categories, and for those purposes there is no significant loss of generality in assuming pseudo-unitarity, which is weaker than unitarity. 

\subsubsection{Zesting and shadow rack cohomology} 
\label{sec: zest cohomology}

We showed above that (up to a small correction) the zest invariant can be computed as a shadow rack cocycle invariant.
This suggests there may be a cohomology theory underlying zesting.\note{
Indeed there are some group-cohomological ways to classify different pieces of the zesting data in \cite{Delaney2020}, but a full classification and holistic cohomology theory is not known.  
}
To further support this we show that cohomologous \(2\)-cocycles correspond to equivalent zesting data.

A \defemph{shadow rack \(2\)-coboundary} is a function of the form
\[
  (i; a_1, a_2) \mapsto
  \frac{
    \tau_{i}(a_1)
    \tau_{ia_1}(a_2)
    }{
    \tau_{ia_2}(a_1)
    \tau_{i}(a_2)
  }
\]
for some \(\tau : A \times A \to \kkm\).
It is easy to see that changing a shadow rack 2-cocycle \(\omega\) by a \(2\)-coboundary does not change the value of \(Z(L, \omega)\) for any link \(L\).

The analogous notion for zesting data is given in \cite[Definition 3.8(2)]{Delaney2020}.
We say the associative zesting data \((\lambda, \nu)\) and \((\lambda', \nu')\) are \defemph{equivalent} if there is a family of isomorphisms \(g(a_1, a_2) : \lambda(a_1, a_2) \to \lambda'(a_1, a_2)\) with
\[
   \left (  (g(a_2, a_3) \otimes g(a_1, a_2 a_3)) \right ) \circ \nu(a_1, a_2, a_3)
  =
  \nu'(a_1, a_2, a_3)
  \circ \left ( g(a_1, a_2) \otimes g(a_1 a_2, a_3) \right ).
\]
Thus when one works with $\nu$ as \(\kkm\)-valued 3-cochain on $A$ the data of such isomorphisms $g$ becomes a $2$-cochain on $A$, and two choices of associative zesting data $\nu$ and $\nu'$ are equivalent if there exists $g:A \times A \to \kkm$ such that
\[
\nu'(a_1,a_2,a_3) = \frac{g(a_2,a_3)g(a_1,a_2a_3)}{g(a_1,a_2)g(a_1a_2,a_3)} \nu(a_1,a_2,a_3),
\]
in which case we can quickly check that their associated shadow rack 2-cocycles $\phi$ and $\phi'$ are related by
\[
\phi'_i(a_1,a_2) = \phi_i(a_1,a_2) \frac{g(i,a_2)g(ia_2,a_1)}{g(i,a_1)g(ia_1,a_2)}.
\]
To do this we need the fact that \(g(a,b)=g(b,a)\), which holds because $\lambda$ is a symmetric \(2\)-cocycle on $A$.
This shows that the \(2\)-cocycle \(\phi'\) induced by the equivalence of zesting data changes \(\phi\) by a \(2\)-coboundary.

One can similarly check that an equivalence of braided zesting data (with respect to fixed associative zesting data $(\lambda, \nu$) induces a shadow rack 2-coboundary. For a fixed $\lambda$ and $\nu$ two choices of braided zesting data $t$ and $t'$ are equivalent if they are related by an alternating bicharacter, i.e.~ a bicharacter $\rho:A \times A \to \kkm$ with $\rho(a,a)=1$ \cite[Definition 4.6]{Delaney2020}. 

\begin{proposition}
\label{prop: coboundary}
Let $\lambda$ be a 2-cocycle valued in $\Inv(\mathcal{C}_e)$ as in \cref{def:zesting-conditions}(a).
\begin{thmenum}
  \item If \(\nu\) and \(\nu'\) are equivalent choices of associative zesting data with respect to \(\lambda\), then the shadow rack 2-cocycles \(\phi\) and \(\phi'\) differ by a 2-coboundary.
  \item For a fixed choice of associative zesting data \(\nu\) and equivalent choices of braided zesting data \(t\) and \(t'\), the shadow rack 2-cocycles \(\phi\) and \(\phi'\) differ by a 2-coboundary.
    \qedhere
\end{thmenum}
\end{proposition}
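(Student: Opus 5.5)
Both parts are direct computations with the scalar formula \(\phi_i(a_1,a_2)=\nu(i,a_2,a_1)^{-1}t(a_1,a_2)\nu(i,a_1,a_2)\) from \cref{sec:numerical}. In each case I will substitute the modified data and show the ratio \(\phi'_i(a_1,a_2)/\phi_i(a_1,a_2)\) has the coboundary form
\[
  \frac{\tau_i(a_1)\tau_{ia_1}(a_2)}{\tau_{ia_2}(a_1)\tau_i(a_2)}
\]
for an explicit \(\tau\colon A\times A\to\kkm\).

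For part (a), equivalence gives \(\nu'(a_1,a_2,a_3)=\frac{g(a_2,a_3)g(a_1,a_2a_3)}{g(a_1,a_2)g(a_1a_2,a_3)}\nu(a_1,a_2,a_3)\) with \(t\) unchanged. I have to be careful that transporting along \(g\) leaves \(t\) unchanged as a scalar. This holds because \(t(a,b)\colon\lambda(a,b)\to\lambda(b,a)\) is conjugated to \(g(b,a)\circ t(a,b)\circ g(a,b)^{-1}\), and \(g(a,b)=g(b,a)\) by the symmetry of \(\lambda\). I then substitute into \(\phi'_i\). The factor from \(\nu'(i,a_1,a_2)\) is \(\frac{g(a_1,a_2)g(i,a_1a_2)}{g(i,a_1)g(ia_1,a_2)}\), and the inverse factor from \(\nu'(i,a_2,a_1)^{-1}\) is \(\frac{g(i,a_2)g(ia_2,a_1)}{g(a_2,a_1)g(i,a_2a_1)}\). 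Using \(g(a_1,a_2)=g(a_2,a_1)\) and commutativity of \(A\), the terms \(g(a_1,a_2)\) and \(g(i,a_1a_2)\) cancel. This leaves the ratio displayed before the proposition, \(\frac{g(i,a_2)g(ia_2,a_1)}{g(i,a_1)g(ia_1,a_2)}\). Setting \(\tau_i(a)\defeq g(i,a)^{-1}\) puts it exactly in coboundary form.

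For part (b), \(\nu\) is fixed and \(t'=\rho\, t\) for an alternating bicharacter \(\rho\), so \(\phi'_i(a_1,a_2)=\rho(a_1,a_2)\phi_i(a_1,a_2)\). I need \(\tau\) with \(\frac{\tau_i(a_1)\tau_{ia_1}(a_2)}{\tau_{ia_2}(a_1)\tau_i(a_2)}=\rho(a_1,a_2)\). The natural guess is \(\tau_i(a)\defeq\rho(i,a)\). Bimultiplicativity gives \(\tau_{ia_1}(a_2)/\tau_i(a_2)=\rho(a_1,a_2)\) and \(\tau_i(a_1)/\tau_{ia_2}(a_1)=\rho(a_2,a_1)^{-1}\). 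The product is therefore \(\rho(a_1,a_2)\rho(a_2,a_1)^{-1}\). An alternating bicharacter satisfies \(1=\rho(a_1a_2,a_1a_2)=\rho(a_1,a_2)\rho(a_2,a_1)\), so \(\rho(a_2,a_1)^{-1}=\rho(a_1,a_2)\), and the product becomes \(\rho(a_1,a_2)^2\), which is not \(\rho(a_1,a_2)\). I would fix this by using a square root: choose \(\tau_i(a)=\sigma(i,a)\) with \(\sigma\) a bicharacter such that \(\sigma(a_1,a_2)\sigma(a_2,a_1)^{-1}=\rho(a_1,a_2)\).

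The main obstacle is producing this \(\sigma\). I would decompose \(A\) into cyclic factors and take \(\sigma\) to be the "upper triangular half" of \(\rho\). On generators \(e_k\), set \(\sigma(e_k,e_l)=\rho(e_k,e_l)\) for \(k<l\) and \(1\) otherwise. Alternation forces \(\rho(e_k,e_k)=1\), and \(\rho(e_k,e_l)^{\gcd(n_k,n_l)}=1\), which makes \(\sigma\) well defined. Expanding bimultiplicatively then gives \(\sigma(x,y)\sigma(y,x)^{-1}=\rho(x,y)\). This is the standard fact that alternating bicharacters on finite abelian groups are commutators of bicharacters. If needed, I could instead allow \(\tau\) not of the form \(\rho(i,\cdot)\), but the triangular splitting should suffice.
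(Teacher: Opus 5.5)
Your proof is correct. Part (a) is the same computation the paper does in the paragraph before the proposition, with the primitive $\tau_i(a)=g(i,a)^{-1}$ written out explicitly. For part (b) the paper says only ``one can similarly check.'' Your proposal supplies the one nontrivial input this needs: an alternating bicharacter $\rho$ on the finite group $A$ is the antisymmetrization $\sigma(x,y)\sigma(y,x)^{-1}$ of a bicharacter $\sigma$. Your upper-triangular construction proves this correctly, and $\tau_i(a)=\sigma(i,a)$ then gives the coboundary. Your own transport formula $t'(a,b)=g(b,a)\circ t(a,b)\circ g(a,b)^{-1}$ lets you unify the two parts. If you do not assume $g(a,b)=g(b,a)$ in (a), the $\nu'$ factors leave an extra $g(a_1,a_2)/g(a_2,a_1)$. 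This cancels exactly against the factor $g(a_2,a_1)/g(a_1,a_2)$ in $t'(a_1,a_2)$. So the symmetry of $g$ is unnecessary; the paper asserts it, but it does not follow from the equivalence condition alone. Part (b) is then the special case of a $2$-cocycle $g$ (so $\nu'=\nu$) with $g(b,a)/g(a,b)=\rho(a,b)$. Taking $g=\sigma^{-1}$ gives such a cocycle and returns exactly your $\tau_i(a)=\sigma(i,a)$. So your argument is presumably the ``similar'' route the paper has in mind, with the existence of the required cocycle made explicit.
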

\qedhere

It would be ideal to be able to compare triples of braided zesting data $(\lambda, \nu, t)$ and $(\lambda',\nu',t')$ simultaneously, but this is beyond our scope for now.

One might hope to use the tools of rack cohomology to classify zestings, since equivalent associative (or braided) zesting data correspond to the same cohomology class. 
Unfortunately, the converse does not appear to be true: not every shadow rack 2-cocycle on \(A\) comes from zesting.
The issue is that the braided zesting relations in \cref{def:zesting-conditions}(c) imply \eqref{eq:cocycle relation} but in general are stronger because they assert relations between \(\phi_{i}(ab, c)\), \(\phi_{i}(a,c)\), and \(\phi_{i}(b,c)\).

In any case, we have shown that certain rack cocycle invariants appear quite naturally in Reshetikhin-Turaev TQFTs related by zesting.
To our knowledge this represents a new connection between invariants of links that arise from discrete algebraic structures like quandles and racks and those that are computed by a $(2+1)$D TQFT or ribbon fusion category.
Another place where they make contact is in the $(2+1)$D Dijkgraaf-Witten TQFT, where the invariants of framed links colored by a restricted class of simple objects are given by quandle coloring invariants \cite[Section 4.3.3]{Bonderson2019}.

\subsection{Computing the zest invariant of a link is easy}
An almost immediate consequence of the local formula for the zest invariant of a link is that it is efficient to compute as a function of the size of the link. 
Let $D$ be any link diagram representing $L$. Let $n(D)$ denote the number of components of $D$, and $c(D)$ the number of crossings of $D$; we take the quantity $n(D)+c(D)$ to measure the size of $D$.

\begin{theorem}
\label{thm: efficient algorithm}
Given ribbon zesting data $\zeta=(\lambda,\nu,t,\epsilon)$ and a diagram $D$ of an $A$-colored link $L$, the zest invariant $\mathcal{J}_{\zeta}(L)$ can be computed in polynomial time as a function of the size of $D$.
\end{theorem}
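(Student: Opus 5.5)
Since the zesting data \(\zeta\) is fixed, \(A\) is a fixed finite group and the scalar-valued functions \(\nu\), \(t\), \(\epsilon\), \(\omega\), \(\dim\lambda\) (as in \cref{sec:numerical}) are fixed finite tables. Their entries are roots of unity or lie in a fixed number field \(K\subset\kk\): \(\nu\) and \(t\) solve polynomial equations with coefficients in the field generated by the braiding of \(\Inv(\mathcal{C}_e)\), and we may pass to normalized representatives. I would precompute these tables in constant time and represent each entry as a vector over \(\mathbb{Q}\) in a fixed basis of \(K\), with multiplication tables of constant size. By \cref{thm:base color independence} we may fix the base color \(i=e\) once and for all.

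The steps, in order, are as follows.

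\textbf{Step 1 (Morse position).} Convert \(D\) into a form where the local rules apply. The simplest route is to use Vogel's algorithm \cite{Vogel1990} to turn \(D\) into a braid closure of an \(m\)-strand braid with \(O(c(D)+n(D))\) crossings and \(m=O(c(D)+n(D))\) strands. This takes time polynomial in the size of \(D\). Vogel moves are Reidemeister II moves, which preserve the blackboard framing and the \(A\)-colors of components. Alternatively, one could put \(D\) into Morse position directly and use the cup and cap rules \cref{eq:coev-left-zest,eq:ev-right-zest,eq:coev-right-zest,eq:ev-left-zest}; this also has polynomially many generators.

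\textbf{Step 2 (propagating colors).} The component colors determine the arc colors, and the region colors are determined by sweeping across the diagram using the rule in \cref{fig:coloring-rule-region}. Each region color is computed by a constant-time multiplication in \(A\), so there are linearly many operations in total.

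\textbf{Step 3 (evaluation).} Apply the local formula
\[
\zest{D}=\prod_{j=1}^m \epsilon(a_j)\prod_{c\in D}\phi_{i_c}(a_c,b_c)^{\sigma(c)}.
\]
This formula follows from \cref{thm:tangle relations} together with the generator values and the closure computation illustrated in \cref{ex: whitehead}. Each factor \(\phi_i(a,b)=\nu(i,b,a)^{-1}t(a,b)\nu(i,a,b)\) is a table lookup. This gives \(O(c+m)\) multiplications in \(K\).

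\textbf{Main obstacle: bit complexity.} The expected difficulty is that the numbers produced by these multiplications do not blow up in size. I would handle this first by precomputing \(\phi_i(a,b)^{\pm1}\) for all finitely many triples. I would then observe that every such value is a unit of finite multiplicative order. Indeed, \(\nu\) and \(t\) may be chosen to be roots of unity: \(t\) satisfies \(t(a,a)\)-relations forcing finite order, and the values of \(\omega\) are characters on the finite group \(A\). If the values are roots of unity, the product lives in the finite cyclic group \(\mu_N\subset K\), so one just tracks an exponent mod \(N\), and the computation runs in linear time. If roots of unity cannot be guaranteed, I fall back on the standard fact that multiplying \(k\) elements of a fixed number field, each of bounded height, gives heights growing linearly in \(k\). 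Each multiplication then costs polynomial time in the current bit size, so the total cost is polynomial in \(c(D)+n(D)\).
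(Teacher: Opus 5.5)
Your proposal is correct and follows the paper's route: convert to a braid closure with Vogel's algorithm, propagate the shadow coloring, then evaluate $\prod_j \epsilon(a_j)\prod_c \phi_{i_c}(a_c,b_c)^{\sigma(c)}$ by table lookups and arithmetic in a fixed number field. Your linear height-growth argument is a more careful version of the paper's remark that these multiplications come essentially for free.

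Two small fixes are needed. First, each Vogel move is a Reidemeister II move that adds two crossings, and up to $(s(D)-1)(s(D)-2)/2$ moves may be needed. So the braid has $O((c(D)+n(D))^2)$ crossings, not $O(c(D)+n(D))$; this is still polynomial. Second, the root-of-unity shortcut does not hold for arbitrary given data. Under an equivalence of zesting data, $\phi_i(a,b)$ can pick up an arbitrary factor $\frac{g(i,b)g(ib,a)}{g(i,a)g(ia,b)}$. You should therefore rely on your height-growth fallback, or else justify replacing $\zeta$ by an equivalent root-of-unity-valued zesting and invoke invariance of link values under shadow-rack coboundaries.
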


The algorithm used in the computation of \(\mathcal{J}_{\zeta}(L)\) is simply the local formula for \(\mathcal{J}_{\zeta}(L)\), but there are some minor technical details needed to give a careful proof of \cref{thm: efficient algorithm}, and we give the full details shortly in \cref{sec:zest link algorithm proof}. The main significance of \cref{thm: efficient algorithm} is that the complexity of the exact computation of Reshetikhin-Turaev invariants of framed links colored by simple objects is preserved under ribbon zesting:

\begin{corollary}
\label{thm: RT link complexity preserved}
Let \(\mathcal{C}\) and \(\mathcal{C}^\zeta\) be ribbon fusion categories related by zesting with data \(\zeta=(\lambda,\nu,t,\epsilon)\), and suppose that \(L\) is a framed, oriented link colored by \(\text{Irr}(\mathcal{C})=\text{Irr}(\mathcal{C}^\zeta)\). Then \(\mathcal{F}_{\mathcal{C}^\zeta}(L)\) is polynomial-time Turing equivalent to \(\mathcal{F}_{\mathcal{C}}(L)\).
\end{corollary}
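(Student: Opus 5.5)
The argument runs through the factorization of \cref{thm:factorization existence}, specialized to links. For a homogeneous \(\mathcal{C}\)-colored link \(L\), the source and target of \(\linkinv{L}{\mathcal{C}}\) are both \(\tu\). The warp product with \(\zest{e,L}\) therefore degenerates to an ordinary product of scalars:
\[
  \linkinv{L}{\mathcal{C}^{\zeta}} = \zest{L}\,\linkinv{L}{\mathcal{C}},
\]
where \(\zest{L}\in\kkm\) is the numerical value of \cref{sec:numerical}. By \cref{thm:base color independence} this value does not depend on the base color. Simple objects are homogeneous, so any link colored by \(\operatorname{Irr}(\mathcal{C})\) is homogeneous, and its \(A\)-coloring is read off from the degrees of its colors.

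The reduction \(\mathcal{F}_{\mathcal{C}^\zeta}\le\mathcal{F}_{\mathcal{C}}\) goes as follows. Given a diagram \(D\) of an \(\operatorname{Irr}(\mathcal{C})\)-colored link, first replace each color by its \(A\)-degree; this is a table lookup in the fixed finite data of \(\mathcal{C}\) and takes linear time. Next compute \(\zest{L}\) in polynomial time using \cref{thm: efficient algorithm}. Then make one oracle call to \(\mathcal{F}_{\mathcal{C}}\) on the same diagram. Finally multiply the two numbers. Both lie in a fixed number field containing the entries of the zesting data and the structure constants of \(\mathcal{C}\), so this last multiplication costs polynomial time in the bit size of the oracle's output.

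The reverse reduction is the same, except that we divide by \(\zest{L}\). This is allowed because \(\zest{L}\in\kkm\) is a product of invertible scalars \(\phi^{\pm1}\) and \(\epsilon(a)=\pm1\). Alternatively, one can observe that \(\mathcal{C}\) is itself a zest of \(\mathcal{C}^\zeta\) by the inverse zesting data and apply the same argument symmetrically; dividing directly avoids having to justify that.

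The main obstacle is making the cost model precise. The answer has to be represented exactly, for instance as a coordinate vector over a fixed basis of a number field \(\mathbb{Q}(\xi)\) containing all the relevant values, and we must check that one multiplication or division in that field is polynomial in the input sizes. I would cite the standard fact that arithmetic in a fixed number field is polynomial time. I would also note that \(\zest{L}\) itself has polynomial bit size: it is a product of at most \(c(D)\) values of \(\phi\), each from a fixed finite set, together with \(n(D)\) signs. Everything else is direct from \cref{thm:factorization existence} and \cref{thm: efficient algorithm}.
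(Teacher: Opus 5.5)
Your proposal is correct and takes the same route as the paper. The paper treats this corollary as immediate from two ingredients: the link case of the factorization \(\linkinv{L}{\mathcal{C}^{\zeta}} = \zest{L}\,\linkinv{L}{\mathcal{C}}\) from \cref{thm:factorization existence}, and the polynomial-time computability of \(\zest{L}\) from \cref{thm: efficient algorithm}. Your extra remarks (dividing by the nonzero scalar \(\zest{L}\) for the reverse reduction, and bounding its bit size in a fixed number field) make explicit what the paper leaves implicit.
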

\qedhere

In other words, without knowing precisely the absolute complexity of computing Reshetikhin-Turaev invariants in \(\mathcal{C}\), one can conclude that it is the same as that for \(\mathcal{C}^\zeta\). 

When \(\mathcal{C}\) and \(\mathcal{C}^\zeta\) are both modular and the Reshetikhin-Turaev construction defines invariants of closed 3-manifolds, one can also ask about the relative complexity of these problems. In \cref{sec:zest manifold invariant} we show that an analogous statement to \cref{thm: RT link complexity preserved} holds for \(A\)-manifold invariants, although we don't expect it to hold for 3-manifold invariants generally. In any case, from the point of view of topological quantum computation based on anyon braiding, the complexity of link invariants is more salient.

\subsubsection{Proof of \cref{thm: efficient algorithm}}
\label{sec:zest link algorithm proof}

Let $D$ be a diagram of an oriented, framed link $L$ with components colored by \(A\).
It will become clear later that without loss of generality we may assume that $D$ is diagrammatically non-split, i.e.~that the graph underlying $D$ is connected, since the value of the zest invariant is given by the product of the invariants of the connected components. 

To prove \cref{thm: efficient algorithm} we first show that converting from an arbitrary \(A\)-colored link diagram $D$ to an \(A\)-shadow-colored braid closure $\hat{B}$ can be done in polynomial time in the size of $D$, so that the local formula 

\[\mathcal{J}(\hat{B}) = \prod_{j=1}^m \epsilon(a_j) \prod_{c \in \hat{B}} \phi_{i_c}(a_c,b_c)^{\sigma(c)}  \]
from \cref{sec:zesting defines rack cocycle} can be used to compute the zest invariant of $L$.

\begin{theorem}[Vogel's algorithm \cite{Vogel1990}]
\label{lem:vogel}
A link diagram $D$ can be converted into a braid closure $\hat{B}$ in polynomial time in $n(D)+c(D)$. 
\end{theorem}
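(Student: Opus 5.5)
The plan is to follow Vogel's original argument and check only that each step is polynomial. We first put $D$ in a form to which Vogel's moves apply. Smooth every crossing according to the orientation, producing the Seifert circles of $D$; there are at most $c(D)+n(D)$ of them, and they can be found in time linear in the size of $D$ by tracing the planar graph (e.g.\ stored as a PD code or combinatorial map). Recall that $D$ is the closure of a braid, with braid axis transverse to the page, exactly when all Seifert circles are nested and coherently oriented. So the task is to reach such a diagram.

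Following Vogel, call two Seifert circles \emph{incompatible} if they bound a common region of the complement of the Seifert circles (with the crossings shrunk to small arcs) and are oriented so that they are not coherently nested. At such a face we perform a \emph{Vogel move}: a Reidemeister II move that pushes an arc of one circle over an arc of the other inside that face. The key steps, in order, are the following.
\begin{enumerate}
\item Define the \emph{height} $h(D)$ as the number of incompatible pairs adjacent to a common face. Show that $h(D)\le c(D)+n(D)$, since there are at most linearly many faces and each face contributes boundedly, counted suitably.
\item Show that if $h(D)>0$ then some face admits a Vogel move.
\item Show that a Vogel move decreases $h$ by one, leaves the number of Seifert circles unchanged, and adds exactly two crossings.
\item Show that if $h(D)=0$ then all Seifert circles are coherently nested, so $D$ is isotopic in the plane to a braid closure.
\end{enumerate}
These are exactly the combinatorial lemmas in Vogel's paper, and we will cite them rather than reprove them.

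Given these, the complexity bound is immediate. The number of moves is at most $h(D)=O(c(D)+n(D))$, and the final diagram has at most $c(D)+2h(D)$ crossings. Each move, including the search for a reducing face and the update of the combinatorial data, takes time polynomial in the current size. The total time is therefore polynomial, indeed $O((c+n)^2)$ up to bookkeeping. Reading off the braid word from the nested, coherent diagram is also linear: cut along a ray from the braid axis and record crossings in order.

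For our application we must also carry along the $A$-coloring and framing. Since Vogel moves are Reidemeister II moves and planar isotopies, the component colors pass along unchanged and the shadow coloring is recomputed locally. The blackboard framing is preserved because no Reidemeister I moves are used.

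The main obstacle is step (2) together with the monotonicity in step (3). One must check that a Vogel move at a well-chosen face does not create new incompatible pairs elsewhere. This is the subtle part of Vogel's argument: it relies on choosing a face adjacent to an incompatible pair and on a careful analysis of how the Seifert graph changes. We would take the statement directly from Vogel's paper, as refined by Traczyk, and limit our own verification to the polynomial bound on the number and cost of moves.
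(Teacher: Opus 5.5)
Your overall plan (run Vogel's moves and bound everything by the number \(s=s(D)\le c(D)+n(D)\) of Seifert circles) is the same as the paper's. However, step (1) is false, and with it your linear bound on the number of moves.

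A single face can be adjacent to arbitrarily many Seifert circles, so faces do not ``contribute boundedly''. Start with a round unknot diagram and add \(k\) curls on its outside. Then \(c(D)+n(D)=k+1\). The Seifert circles are the central circle together with \(k\) small, equally oriented circles, all adjacent to the unbounded face and pairwise incompatible, so your \(h(D)\ge\binom{k}{2}\).

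Step (3) also fails for this face-local count. The first Vogel move in this example removes \(k-1\) incompatible pairs among the small circles at once. Later moves (for instance, folding the merged circle with a third small circle) create new incompatible pairs inside the merged region.

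The quantity a Vogel move lowers by exactly one is the number of incoherent pairs among \emph{all} pairs of Seifert circles, whether or not they share a face. In a non-split diagram the at least \(s-1\) pairs joined by crossings are coherent. So this number is at most \(\binom{s}{2}-(s-1)=\binom{s-1}{2}\), which is Vogel's bound. In the example it equals \(\binom{k}{2}\), so quadratically many moves genuinely occur.

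The repair is exactly the paper's argument. The number of moves is at most \(\binom{s-1}{2}\le (c(D)+n(D))^2\); this is the paper's ``\(s(D)^2\) steps'' combined with the Seifert-circle bound \(s(D)\le n(D)+c(D)\) it cites. Hence the resulting braid has \(s\) strands, the same components, and at most \(c(D)+(s-1)(s-2)=O\bigl((c(D)+n(D))^2\bigr)\) crossings. Each move and the final read-off take time polynomial in the current diagram size, so the conversion is polynomial, though not \(O((c+n)^2)\) overall.

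The rest of your bookkeeping is correct: each move is a Reidemeister II move that adds two crossings and preserves components, colors, and blackboard framing. Note that it gives \(c(\hat B)\) equal to \(c(D)\) plus twice the number of moves, not the paper's \(c(\hat B)=c(D)\).
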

\begin{proof}  Using Vogel's algorithm, $D$ can be converted into a braid closure \(\hat{B}\) in at most $s(D)^2$ steps, where $s(D)$ is the number of Seifert circles. 
By \cite[Lemma 2]{Cui2016}, $s(D) \le n(D)+c(D)$, and so representing $L$ as a braid closure requires no more than $\left (n(D)+c(D)\right )^2$ steps. Moreover,  $c(\hat{B})=c(D)$ and $n(\hat{B})=n(D)+2$. 
\end{proof}

\begin{proposition}
\label{prop: induced shadow coloring}
Suppose that the $A$-colored link diagram is the trace closure $\hat{B}$ of a braid diagram $B$ on $m$-strands obtained from a link diagram $D$ via Vogel's algorithm. Then an $A$-shadow coloring can be computed in polynomial time in $n(D)+c(D)$.
\end{proposition}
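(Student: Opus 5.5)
The plan is to compute the shadow coloring by a single sweep through the braid word, using only the rule in \cref{fig:coloring-rule-region} and the fact that $A$ is abelian. First I fix the data produced by \cref{lem:vogel}: a braid word $B = \sigma_{k_1}^{\pm1}\cdots\sigma_{k_\ell}^{\pm1}$ on $m$ strands with $\ell = c(\hat B) = c(D)$ and $m \le s(D) \le n(D)+c(D)$. I also carry along, for each strand position at the top of $B$, the $A$-color of the link component passing through it. These colors are transported from $D$ through Vogel's moves; since each move is a local Reidemeister II move, tracking the component labels costs only constant overhead per step. Because all strands of $B$ are oriented downward and $A$ is abelian, every crossing permutes the colors without changing them (the rack operation $\qn$ is trivial, \cref{ex: abelian shadow}). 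So the color sequence $(a_1,\dots,a_m)$ at every level of $B$ is obtained from the previous one by swapping positions $k$ and $k+1$. Updating a length-$m$ array over $\ell$ crossings takes $O(m\ell)$ group operations.

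Next I fix the base color $i$ of the leftmost (unbounded) region. By \cref{thm:base color independence} any choice works, so I take $i=e$. At a given level with strand colors $(a_1,\dots,a_m)$, the region between strands $j$ and $j+1$ has color $i a_1\cdots a_j$. This follows by applying the downward-edge rule in \cref{fig:coloring-rule-region} repeatedly from left to right. For the crossing $\sigma_k^{\pm1}$ the only data the local formula needs are the left-hand region color $i_c = i a_1\cdots a_{k-1}$ and the two incoming colors $a_k, a_{k+1}$. Both the prefix products and the color array can be maintained incrementally, since a swap of positions $k,k+1$ changes only the single prefix product $i a_1\cdots a_k$. Each crossing therefore costs $O(1)$ multiplications in $A$ after an $O(m)$ initialization. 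The closure $\hat B$ adds no crossings. Its regions are exactly the regions of $B$ joined through the closing arcs, and these are consistently colored because the colors at the top and bottom of $B$ agree as multisets and the total color $a_1\cdots a_m$ is preserved.

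Finally I bound the cost. Elements of the fixed finite group $A$ are stored with $O(\log|A|)=O(1)$ bits and multiplied in constant time, for example by table lookup. The total running time is therefore $O(m+\ell)$ on top of Vogel's $O((n(D)+c(D))^2)$, which is polynomial in $n(D)+c(D)$. The one step needing care is the transport of the component colors through Vogel's algorithm. I expect it to be routine, since Vogel moves never merge or split components. The main point to justify is that this labeling can be maintained alongside the diagram data structure without extra asymptotic cost.
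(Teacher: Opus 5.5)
Your proposal is correct and follows essentially the paper's argument: take base color $e$ by \cref{thm:base color independence}, compute the region colors at the top of $B$ as prefix products $a_1\cdots a_j$ in $m$ steps, then sweep down the braid assigning one new region color per crossing, with the trace closure adding no new regions. Two harmless slips. First, each Vogel move is a Reidemeister II move that adds two crossings, so in fact $\ell \le c(D)+(s(D)-1)(s(D)-2)$ rather than $\ell = c(D)$; this bound is still polynomial. Second, the regions stay consistent across the closing arcs because the top and bottom color sequences agree position by position (each closing arc lies on a single component), not merely as multisets with the same total product.
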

\begin{proof}
By \cref{thm:base color independence}, $\mathcal{J}_{\zeta}(i,L)$ is independent of the choice of base color $i \in A$,
so for the purposes of computing $\mathcal{J}_{\zeta}(i,L)$ we may as well take $i=e$.
(However, the following argument works for any $i\in A$.)

Starting at the top of the braid $B$ and moving left to right, the region labels are computed strand by strand by multiplying the base color with the label of each strand according to its orientation in $m$ steps. By \cite{Vogel1990}, the number of strands $m$ in $\hat{B}$ is at most $n(D)+ (s(D) - 1)(s(D) - 2)$, which by \cite[Lemma 2]{Cui2016} is $O(\left ( n(D)+c(D)\right )^2)$. 

Then moving down the braid, each crossing introduces a new region in the interior of the diagram whose color is computed according to the rule in \cref{eq:zest object rules}, independent of the sign of the crossing. This takes $c(\hat{B})=c(D)$ steps.

Since the $A$-colored link diagram was in braid closure form to begin with, there are no additional region labels introduced in passing from $B$ to $\hat{B}$.
\end{proof}

We may thus happily assume our link diagrams are \(A\)-shadow colored braid closures without introducing any significant complexity beyond the computation of the zest invariant via our local formula as illustrated in  \cref{sec:zest invariant examples}.

\begin{remark} It is also possible to efficiently compute the induced shadow coloring before converting to a braid closure--for example by constructing the dual plane graph to $D$, picking a vertex and coloring it with an element of $A$, and then using a greedy algorithm to compute the induced vertex coloring from a spanning tree. However, then one has to do the bookkeeping of keeping track of how the shadow colored $L$ induces a shadow coloring of a braid closure in Vogel's algorithm, which we would like to avoid.
\end{remark}

It is clear from the structure of the formula
\[
  \mathcal{J}(\hat{B}) = \prod_{j=1}^m \epsilon(a_j) \prod_{c \in \hat{B}} \phi_{i_c}(a_c,b_c)^{\sigma(c)}
\]
that it is given by the product of $m$ signs $\{\pm 1\}$ and the product of $c(\hat{B})$ many evaluations of $\phi$ on shadow-colored crossings of $\hat{B}$.
This formula is computed from the zesting data $\nu$, $t$, and $\epsilon$, which arise as solutions to the algebraic equations \cref{eqs: zesting equations}, and whose values we may assume lie in a fixed finite extension $K$ of $\mathbb{Q}$ which is pre-computed independently of $D$ or $L$.
There are polynomial time algorithms to identify a primitive element $\alpha$ for which $K=\mathbb{Q}(\alpha)$ and to perform field operations in $\mathbb{Q}(\alpha)$, see for example \cite{Lenstra1992}.
As a consequence we can perform the multiplications in the formula for $\mathcal{J}(\hat{B})$ for free.

\section{Zested \texorpdfstring{\(A\)}{A}-manifold invariants}
\label{sec:zest manifold invariant}

So far we have been interested in the factorization of the Reshetikhin-Turaev invariants of homogeneously colored tangles under zesting.
When \(\mathcal{C}\) is modular the Reshetikhin-Turaev construction also provides invariants of 3-manifolds, and when \(\mathcal{C}^\zeta\) is also modular it is natural to ask whether a similar factorization holds for its \(3\)-manifold invariants. 

However, when \(\mathcal{C}\) is nondegenerate, not every choice of zesting data $\zeta$ with respect to the universal grading will result in \(\mathcal{C}^\zeta\) being nondegenerate (unless \(\mathcal{C}_{pt}\) is symmetric by \cite[Corollary 5.13]{Delaney2020}). Moreover, it is not hard to see from the description of Reshetikhin-Turaev \(3\)-manifold invariants via Kirby-colored link surgery diagrams that a factorization is likely too much to hope for in general:
while a link colored by a homogeneous object in $\mathcal{C}^\zeta$ will factorize (\cref{thm:manifold invariants factor}(b)), a link colored by an arbitrary object will not.
We explain this point later in \cref{rmk: zested 3-mfld invariant}.
Instead it is natural to look at invariants of \(3\)-manifolds with \(A\)-structure.
Under zesting these admit a natural factorization into a product of the original Reshetikhin-Turaev invariant and an \(A\)-manifold invariant that depends only on the zesting data.
We explore the invariants of \(A\)-manifolds that arise in this manner in the rest of this section, and comment on their complexity when applicable.

As a first step, we recall that for colored tangles it was important to restrict to colorings homogeneous with respect to the \(A\)-grading, which have the following topological interpretation.
(This works more generally for any abelian group but here we consider only the case where \(A\) is the universal grading group of our category \(\mathcal{C}\).)

\begin{proposition}
  \label{thm:A coloring cohomology}
  An \(A\)-coloring of a link is equivalent to a choice of cohomology class \(\omega \in \cohomol[1]{\comp L; A}\).
\end{proposition}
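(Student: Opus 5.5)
The plan is to identify both sides with homomorphisms out of \(\homol[1]{\comp{L}}\) and use the universal coefficient theorem. Let \(L = L_1 \cup \cdots \cup L_n\) be a link in \(S^3\). Since \(\comp{L}\) is path-connected and \(\homol[0]{\comp{L}}\) is free, there is no Ext term, so
\[
\cohomol[1]{\comp{L}; A} \iso \hom(\homol[1]{\comp{L}}, A).
\]
Next, Alexander duality (or a Mayer--Vietoris computation with a tubular neighborhood of \(L\)) shows that \(\homol[1]{\comp{L}} \iso \ZZ^n\). This group is freely generated by the classes of the meridians \(\mer_1, \dots, \mer_n\), each oriented by the right-hand rule relative to the orientation of \(L_k\).

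From this, a class \(\omega\) determines an \(A\)-coloring by the rule \(L_k \mapsto \omega(\mer_k)\). Conversely, an \(A\)-coloring \(L_k \mapsto a_k\) determines the unique homomorphism \(\ZZ^n \to A\) sending \(\mer_k \mapsto a_k\). These two assignments are mutually inverse because the meridians form a basis. For tangles in \(\mathbb{R}^2 \times I\) the same argument applies verbatim, with one meridian for each component (arc or circle).

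It is also worth checking compatibility with shadow colorings, since this makes the correspondence geometric. Fix a basepoint in the leftmost (unbounded) region, far from \(L\). For each region \(R\), choose a path \(\gamma_R\) in a slightly lifted copy of the projection plane from the basepoint to \(R\), and define the region color as \(i \cdot \omega(\gamma_R)\), after relativizing by the basepoint. Crossing an arc of \(L_k\) in the diagram plane is homologous to traversing \(\mer_k^{\pm1}\), with the sign fixed by the orientation of \(L_k\). This recovers exactly the rules of \cref{fig:coloring-rule-region}. Equivalently, the relative class of \(\omega\) on \(S^3\) minus \(L\), relative to the basepoint, reproduces \cref{def: shadow coloring}.

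The only step needing care is orientation bookkeeping: the sign convention for meridians must match the convention in \cref{fig:coloring-rule-region} that region colors multiply by \(a\) when crossing a downward strand from left to right. If the signs disagree, the fix is to replace \(\omega\) by \(-\omega\), which is harmless. Everything else is standard algebraic topology.
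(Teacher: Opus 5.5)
Your argument is the same as the paper's: universal coefficients (no Ext term because \(H_0\) is free) identify \(\cohomol[1]{S^3 \setminus L; A}\) with \(\hom(\ZZ^n, A)\), the oriented meridians form a basis, and the correspondence is \(\omega \mapsto (\omega(\mer_k))_k\).

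Your extra shadow-coloring paragraph is not needed for the statement, and as written it does not quite make sense, because a cohomology class cannot be evaluated on the non-closed path \(\gamma_R\). The clean version evaluates \(\omega\) on the closed loop that goes down vertically through the base region and back up through \(R\). That loop bounds the vertical strip over a planar path between the two regions. Its class is therefore the signed sum of the meridians of every arc, over or under, that this planar path crosses.
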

\begin{proof}
  Suppose \(L\) has \(n\) components.
  Recall that \(\homol[1]{\comp L; A} \iso \ZZ^n\), with generators corresponding to the meridians  \(\mer_1, \dots, \mer_n\) of  each component.
Because \(\homol[0]{\comp L; \ZZ} \iso \ZZ\) we can think of a class \(\omega \in \cohomol[1]{\comp L; A}\) as an element of \(\hom_\ZZ(\homol[1]{\comp L; A}, A) \iso A^n\), with the value of \(\omega\) on the meridian \(\mer_i\) of component \(i\) corresponding to the label on that component.
  This gives a bijective correspondence between labelings and cohomology classes.
  The orientations make sure that we do not confuse \(\omega(\mer_i)\) and \(\omega(\mer_i)^{-1}\).
\end{proof}

For \(3\)-manifolds this suggests the analogous notion to an \(A\)-grading:
\begin{definition}
  Let \(A\) be an abelian group.
  An \defemph{\(A\)-manifold} is a compact oriented \(3\)-manifold with a choice of cohomology class \(\omega \in \cohomol[1]{M; A}\).
  We call \(\omega\) an \defemph{\(A\)-structure} on \(M\).
\end{definition}
Our surgery invariants of \(A\)-manifolds are a special case of Turaev's \defemph{homotopy quantum field theory} \cite{Turaev2010}, which for completeness we summarize in \cref{sec:surgery colors and A-mfld invariants}.
First we recall some standard material on surgery presentations of \(A\)-manifolds.

\subsection{Surgery presentations of  \texorpdfstring{\(A\)}{A}-manifolds}

Surgery presentations of \(3\) and \(4\)-manifolds are a central tool in low-dimensional topology.
We recall the construction briefly and refer to the textbook \cite{Gompf1999} for more information.

\begin{definition}
  Let \(L\) be a framed link in \(S^3\) with \(c\) components.
  By removing a regular neighborhood of \(L\) we obtain a manifold \(E\) with boundary a disjoint union \(T_1 \du \cdots \du T_c\) of tori.
  The framing of the component \(K_i\) of \(L\) determines a curve \(\mathfrak{f}_i \subset T_i\) in the boundary component \(T_i\).
  We can glue a solid torus \(V_i\) to each \(T_i\) in such a way that the meridian of \(V_i\) is sent to \(\mathfrak{f}_i\), and this completely determines how to attach \(V_i\).
  (The meridian of \(V\) is homologically trivial, so another way to say this is that we attach \(V_i\) to kill the homology class of \(\mathfrak{f}_i\).)
  The resulting compact oriented \(3\)-manifold is said to be obtained by \defemph{surgery on \(L\)}.
  We denote it by \(\surg{L}\).
  It depends only on \(L\) and its framing (i.e.\@ the class of each \(\mathfrak{f}_i \in \homol[1]{T_i, \ZZ}\)).
\end{definition}

\begin{figure}[h]
  \label{fig:blow-up moves}
  \begin{center}
  \begin{tikzpicture}[scale=.5,line width=1,baseline=0cm, decoration={
    markings, mark=at position 0.5 with {\arrow{<}}}]
  \draw (-.5,0)--(-.5,-2.5);
    \draw (-1,0)--(-1,-2.5);
       \draw (1.25,0)--(1.25,-2.5);
  \draw[white, line width=10] [domain=230:320] plot ({2*cos(\x)},{sin(\x)});
  \draw[domain=20:330, postaction={decorate}] plot ({2*cos(\x)},{sin(\x)});
  \draw[white, line width=10] (-.5,0)--(-.5,2.5);
  \draw (-.5,0)--(-.5,2.5);
  \draw[white, line width=10] (-1,0)--(-1,2.5);
  \draw (-1,0)--(-1,2.5);
   \draw[white, line width=10] (1.25,0)--(1.25,2.5);
  \draw (1.25,0)--(1.25,2.5);
  \draw (1.73,-.5) to [out = 50, in = 150] (2.73,.5) to [out=-30, in= 330] (2.25,-.25);
  \draw (0.375,2) node {$\cdots$};
  \draw (0.375,-2) node {$\cdots$};
  \end{tikzpicture}
  \quad $\leftrightarrow$ \qquad
  \begin{tikzpicture}[scale=.5,line width=1,baseline=0cm]
  \draw (-.5,2.5)--(-.5,-2.5);
  \draw (-1,2.5)--(-1,-2.5);
  \draw (1.25,2.5)--(1.25,-2.5);
  \draw[fill=white] (-1.5, -.5) rectangle node {$+1$} (1.75,0.5);
  \draw (0.375,2) node {$\cdots$};
  \draw (0.375,-2) node {$\cdots$};
  \end{tikzpicture}
  \end{center}
    \caption{A negative blowup move, where the box labeled by $+1$ indicates a full twist. The positive move is obtained by changing the sign of both twists.}
\end{figure}

\begin{theorem}
  \label{thm:surgery facts}
  \begin{enumerate}
    \item All compact oriented \(3\)-manifolds arise as surgery on a framed link in \(S^3\).
    \item Two links \(L\) and \(L'\) give the same manifold if and only if they are related by framed isotopy and the positive and negative \defemph{blowup moves}.
    \item We say a blowup move is \defemph{special} if the removed unknot is unlinked from the rest of \(L\).
      If \(\surg{L}\) and \(\surg{L'}\) are homeomorphic then \(L\) and \(L'\) are related by isotopy, negative blowup moves, and special blowup moves of either sign.
      In other words we can assume any positive moves are special.
      \qedhere
  \end{enumerate}
\end{theorem}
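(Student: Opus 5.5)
This theorem collects standard facts from Kirby calculus, so the plan is to reduce each part to the classical results rather than reprove them. We would cite these results, possibly with proof sketches.

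For part (1) we would invoke the Lickorish--Wallace theorem. Every closed oriented \(3\)-manifold \(M\) bounds a compact oriented \(4\)-manifold \(W\). One can arrange \(W\) to consist of a \(0\)-handle and \(2\)-handles only, so \(M=\partial W\) is surgery on the framed attaching link of the \(2\)-handles. An alternative route is through a Heegaard splitting: the gluing map is a product of Dehn twists by Lickorish's theorem on generators of the mapping class group, and each twist is realized by a \(\pm1\)-framed surgery. Either route gives the claim, and we would cite \cite{Gompf1999} for it.

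For part (2) we would cite Kirby's theorem, which says two framed links give homeomorphic manifolds iff they are related by blowups and handle slides. We would then use the Fenn--Rourke reformulation: handle slides can be traded for generalized blowup moves, in which a \(\pm1\)-framed unknot encircling any bundle of strands is removed and a \(\mp1\) full twist is inserted, exactly as in \cref{fig:blow-up moves}. Conversely, each such move is a composite of a special blowup and handle slides. This direction is the substantive input, and we would not attempt to reprove it.

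Part (3) is the step that needs an actual argument. The plan is to show that a non-special positive move can be rewritten using only negative moves, special moves, and isotopy. Suppose a positive move introduces a \(+1\)-framed unknot \(U\) around a bundle of strands \(\beta\). First introduce a split \(-1\)-framed unknot \(V\) by a special move. Then apply negative moves to introduce two further \(-1\)-framed unknots around \(\beta\). Finally, use the lantern-type relation among full twists to realize the \(+1\) twist on \(\beta\) as the inverse of these \(-1\) twists, possibly after isotopy.

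I expect the main obstacle to be this bookkeeping: verifying that the required twist identities in the braid group (a full twist being central, \(\Delta^2\Delta^{-2}=1\)) translate into moves of the allowed types. My first attempt would be the simplest trick. Insert a special \(-1\) unknot \(V\) and a special \(+1\) unknot. Pass the bundle \(\beta\) through one of them, which is a negative move around \(\beta \cup V\). Then absorb the resulting twist. If that fails, we would cite the known statement of this refinement from the literature (it is standard in the Fenn--Rourke framework) rather than derive it.
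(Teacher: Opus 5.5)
For parts (1) and (2) you do what the paper does. The paper gives no proof of this theorem; it recalls it as standard background with a pointer to \cite{Gompf1999}. Your citations of Lickorish--Wallace and of Kirby's theorem in its Fenn--Rourke form are the intended sources. You are also right to read (1) as a statement about closed manifolds, since surgery on a link in \(S^3\) never produces boundary.

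Part (3), the one you single out as needing an actual argument, is where your proposal breaks down. Isotopy, negative blowup moves (special ones included) and special \(-1\) moves never change \(\sigma^{+}\), the number of positive eigenvalues of the linking matrix. Blowing down a \(-1\)-framed unknot \(C\) amounts to the basis change \(x \mapsto x + \lk(x,C)\,C\), which exhibits \(\lk(L \cup C)\) as congruent to the blown-down linking matrix plus an orthogonal \((-1)\) summand. A non-special positive move, however, changes \(\sigma^{+}\) by one. So your first plan, built only from a special \(-1\) unknot, two further \(-1\) unknots and isotopy, cannot realize it under any twist bookkeeping. The lantern relation and \(\Delta^{2}\Delta^{-2}=1\) do not help; the latter only cancels opposite twists on the same bundle.

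Any derivation must therefore use a special \(+1\) move. The real content of (3) is to show, using only negative moves and isotopy, that the \(+1\)-framed unknot around \(\beta\) can be split off. Equivalently, one must show that slides of the strands of \(\beta\) over that unknot can be simulated. Your ``simplest trick'' is too imprecise to tell whether it achieves this: passing \(\beta\) through a split unknot is not an isotopy, and you do not say how the resulting twist is absorbed.

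As it stands, the only sound route you offer for (3) is your fallback of citing the refinement. The paper likewise takes (3) as known and uses it in the proof of \cref{thm:manifold invariants factor}(c), where it checks only \(\operatorname{Bl}^{-}\) and the special \(\operatorname{Bl}^{+}\). You should give a precise reference rather than present the sketch as a proof.
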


Extending these techniques to \(A\)-manifolds is quite simple.
If \(M\) is obtained by surgery along a framed link \(L\) in \(S^3\) then there is a surjection \(\homol[1]{\comp{L}; \ZZ} \to \homol[1]{M; \ZZ}\), so we can describe our class  \(\omega \in \cohomol[1]{M;A}\) by labeling the components of \(L\) with elements of \(A\) as before.
The only new behavior is that \(\cohomol[1]{M; \ZZ}\) now has relations that \(\omega\) must respect.

\begin{definition}
  Let \(L = K_1 \cup \cdots \cup K_c\) be a framed link with \(c\) components.
  The \defemph{linking matrix} of \(L\) is the symmetric \(c \times c\) matrix with entries
  \[
    \lk(L)_{i,j}
    =
    \begin{cases}
      \text{ the linking number of \(K_i\) and \(K_j\) } & i \ne j
      \\
      \text{ the writhe of \(K_i\) } & i = j
    \end{cases}
  \]
\end{definition}

\begin{proposition}
  \(\lk(L)\) is a presentation matrix for \(\homol[1]{\surg{L}, \ZZ}\).
  This means that \(\homol[1]{\surg{L}, \ZZ}\) is the quotient of  \(\ZZ^c\) with relations given by the columns of \(\lk(L)\), or,  thinking of \(\lk(L)\) as the matrix of a linear map \(\ZZ^c \to \ZZ^c\),
  \[
    \homol[1]{\surg{L}, \ZZ} \iso \ZZ^c/ \im \lk(L).
    \qedhere
  \]
\end{proposition}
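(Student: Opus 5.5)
The plan is to identify \(S^3(L)\) with the union \(E \cup V_1 \cup \cdots \cup V_c\) and compute its first homology by Mayer--Vietoris (equivalently, by van Kampen and abelianizing). First, recall from the proof of \cref{thm:A coloring cohomology} that \(\homol[1]{E;\ZZ} \iso \ZZ^c\), freely generated by the meridians \(\mer_1,\dots,\mer_c\); this is Alexander duality in \(S^3\). Each solid torus has \(\homol[1]{V_i;\ZZ}\iso\ZZ\), generated by its core, and each boundary torus has \(\homol[1]{T_i;\ZZ}\iso\ZZ^2\), generated by \(\mer_i\) and a longitude \(\ell_i\) (say the Seifert longitude, which is null-homologous in the complement of \(K_i\) alone).

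Next, apply the Mayer--Vietoris sequence to the decomposition \(S^3(L) = E \cup \bigsqcup_i V_i\) glued along \(\bigsqcup_i T_i\):
\[
  \bigoplus_i \homol[1]{T_i} \to \homol[1]{E} \oplus \bigoplus_i \homol[1]{V_i} \to \homol[1]{\surg{L}} \to \bigoplus_i \widetilde{\operatorname{H}}_0(T_i) = 0 .
\]
(Using reduced homology, the \(\operatorname{H}_0\) terms contribute nothing because all pieces are connected.) Hence \(\homol[1]{\surg{L}}\) is the cokernel of the first map.

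Now compute this map on generators. In \(V_i\) the meridian of \(V_i\) bounds a disk, and it is glued to \(\mathfrak{f}_i\). Choose the basis \(\{\mathfrak{f}_i,\mu_i'\}\) of \(\homol[1]{T_i}\), where \(\mu_i'\) is a curve meeting \(\mathfrak{f}_i\) once, for instance \(\mer_i\). Then \(\mer_i\) maps to a generator of \(\homol[1]{V_i}\), so after eliminating the \(V_i\) summands the cokernel is \(\ZZ^c\) modulo the images of the \(\mathfrak{f}_i\) in \(\homol[1]{E}\).

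The key computation is the class of \(\mathfrak{f}_i\) in \(\homol[1]{E}=\ZZ\langle\mer_1,\dots,\mer_c\rangle\). The coefficient of \(\mer_j\) is the linking number of \(\mathfrak{f}_i\) with \(K_j\). For \(j\ne i\) this equals \(\lk(K_i,K_j)\), since \(\mathfrak{f}_i\) is isotopic to \(K_i\) in the complement of \(K_j\). For \(j=i\) it is the framing integer, which under the blackboard-framing convention is the writhe of \(K_i\). Thus \(\mathfrak{f}_i = \sum_j \lk(L)_{j,i}\,\mer_j\), the \(i\)th column of \(\lk(L)\), and the cokernel is \(\ZZ^c/\im\lk(L)\).

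The main subtlety I expect is justifying the coefficients of \(\mathfrak{f}_i\), in particular that the self-linking term is the writhe. This depends on the paper's implicit convention that framings are blackboard framings of the diagram. One also has to check signs against the paper's reversed crossing convention; I would note that any global sign change does not affect \(\im\lk(L)\).
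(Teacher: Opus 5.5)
The paper gives no proof of this proposition. It is recalled as a standard fact of surgery theory, with a reference to \cite{Gompf1999}. Your Mayer--Vietoris argument is the standard direct proof and is correct in substance. The following steps are all fine:
\begin{itemize}
\item using \(\mer_i\), which meets \(\mathfrak{f}_i\) once and so maps to a generator of \(\homol[1]{V_i}\), to eliminate the \(V_i\) summands;
\item computing \([\mathfrak{f}_i] \in \homol[1]{E}\) by linking numbers;
\item identifying the diagonal entries with writhes under the paper's blackboard-framing convention;
\item observing that the paper's reversed crossing convention only replaces \(\lk(L)\) by \(-\lk(L)\), which has the same image.
\end{itemize}

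One justification is wrong as written. In the reduced Mayer--Vietoris sequence the term after \(\homol[1]{\surg{L}}\) is \(\widetilde{\operatorname{H}}_0\bigl(\bigsqcup_i T_i\bigr) \iso \ZZ^{c-1}\), not \(\bigoplus_i \widetilde{\operatorname{H}}_0(T_i) = 0\). Both the intersection \(\bigsqcup_i T_i\) and the piece \(\bigsqcup_i V_i\) are disconnected once \(c \ge 2\), so ``all pieces are connected'' is false. The conclusion still holds. The next map \(\widetilde{\operatorname{H}}_0\bigl(\bigsqcup_i T_i\bigr) \to \widetilde{\operatorname{H}}_0(E) \oplus \widetilde{\operatorname{H}}_0\bigl(\bigsqcup_i V_i\bigr)\) is injective, because components of the \(T_i\) correspond bijectively to components of the \(V_i\). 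Hence the connecting map out of \(\homol[1]{\surg{L}}\) vanishes. Alternatively, glue in the solid tori one at a time so that each intersection is a single torus.

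For comparison, the other standard route is four-dimensional; the paper's later remark about the trace \(W\) points toward it.
\begin{itemize}
\item \(W = B^4 \cup (\text{2-handles along } L)\) has \(\homol[1]{W} = 0\) and \(\homol[2]{W} \iso \ZZ^c\), with intersection form \(\lk(L)\).
\item The exact sequence \(\homol[2]{W} \to \homol[2]{W, \partial W} \to \homol[1]{\partial W} \to 0\), together with \(\homol[2]{W,\partial W} \iso \hom(\homol[2]{W}, \ZZ)\) from Lefschetz duality and universal coefficients, gives \(\homol[1]{\surg{L}} \iso \operatorname{coker} \lk(L)\).
\end{itemize}
That version explains why the same matrix controls the signature terms in \cref{eq:rt-manifold-definition}. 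However, it needs an extra step (cocores bound meridians) to identify the generators. Your three-dimensional argument is more elementary and exhibits the meridians as generators directly. That is exactly what \cref{thm:A colorings are kernel} uses when it reads the \(A\)-structure off the component colors.
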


\begin{corollary}
  \label{thm:A colorings are kernel}
    An \(A\)-coloring \(\omega\) of a framed link \(L\) gives an \(A\)-structure on \(S^3(L)\) if and only if the columns of  \(\lk(L)\) lie in the kernel of \(\omega\).
    Another way to say this is to think of the matrix \(\lk(L)\) as an endomorphism of \(A^{c}\): then our claim is
  \[
    \cohomol[1]{\surg{L}, A} \iso \ker \lk(L).
    \qedhere
  \]
\end{corollary}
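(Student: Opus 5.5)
The plan is to derive the corollary from \cref{thm:A coloring cohomology}, the preceding proposition, and the long exact sequence / universal coefficient description of \(\cohomol[1]{\surg{L}; A}\). Since \(\surg{L}\) is connected and \(\homol[0]{\surg{L};\ZZ} \iso \ZZ\) is free, the universal coefficient theorem gives
\[
  \cohomol[1]{\surg{L}; A} \iso \hom_\ZZ\bigl(\homol[1]{\surg{L};\ZZ}, A\bigr).
\]
By the proposition, \(\homol[1]{\surg{L};\ZZ} \iso \ZZ^c/\im \lk(L)\), where the generators of \(\ZZ^c\) are the images of the meridians \(\mer_1,\dots,\mer_c\) under the surjection \(\homol[1]{\comp L;\ZZ} \to \homol[1]{\surg{L};\ZZ}\) induced by inclusion of the link exterior.

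Next I would identify homomorphisms out of the quotient. A homomorphism \(\ZZ^c/\im\lk(L) \to A\) is the same thing as a homomorphism \(\omega \colon \ZZ^c \to A\) vanishing on \(\im \lk(L)\), that is, vanishing on every column of \(\lk(L)\). By \cref{thm:A coloring cohomology}, homomorphisms \(\ZZ^c \to A\) are exactly \(A\)-colorings of \(L\), with \(\omega(\mer_i)\) the label of \(K_i\). So an \(A\)-coloring descends to a class on \(\surg{L}\) exactly when \(\omega\) kills the columns. Moreover, it descends uniquely, because the map on \(\homol[1]{}\) is surjective and so restriction \(\cohomol[1]{\surg{L};A} \to \cohomol[1]{\comp L;A}\) is injective. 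This proves the first formulation.

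For the second formulation, write \(\omega = (x_1,\dots,x_c) \in A^c\). The condition that \(\omega\) kills column \(j\) reads \(\sum_i \lk(L)_{i,j} x_i = 0\) in \(A\), written additively. Because \(\lk(L)\) is symmetric, these conditions say precisely that \(\lk(L)\), viewed as an endomorphism of \(A^c \iso A \otimes \ZZ^c\), sends \((x_i)\) to \(0\). Hence the set of admissible colorings is \(\ker \lk(L)\).

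I expect the only real obstacle to be the bookkeeping. One has to check that the surjection \(\homol[1]{\comp L} \to \homol[1]{\surg{L}}\) really is the quotient by the framing curves \(\mathfrak{f}_j = \sum_i \lk(L)_{i,j}\mer_i\), which is the content of the cited proposition. One also has to use symmetry of \(\lk(L)\) to pass between the row and column conventions without ambiguity.
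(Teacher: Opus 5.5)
Your proposal is correct and follows the same route as the paper. The paper's one-line proof just observes that \(\homol[1]{\surg{L};\ZZ}\) is the cokernel of \(\lk(L)\), so applying \(\hom(-,A)\) turns it into a kernel. You have made explicit the two steps that remark leaves implicit: the universal coefficient theorem, and the use of symmetry of \(\lk(L)\) to identify its transpose with \(\lk(L)\) itself.
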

\begin{proof}
  \(\ZZ^c/\im \lk(L)\) is a cokernel, so when we pass to cohomology we instead use the kernel.
\end{proof}

\begin{example}
  With respect to the blackboard framing, the positively-linked Hopf link
  
\begin{align*}
   \begin{tikzpicture}[scale=.75,line width=1,baseline=0cm, decoration={
    markings, mark=at position 0.15 with {\arrow{>}}}]
  \begin{scope}[xshift=-2cm,scale=-1]
  \draw[white, line width=10] [domain=250:290] plot ({2*cos(\x)},{sin(\x)});
  \draw[domain=30:340,<-] plot ({2*cos(\x)},{sin(\x)});
  \draw (1.73,.5) to [out = 320, in = 220] (2.73,-.5) to [out=40, in=30] (2.25,.25);
  \end{scope}
  \draw[white, line width=10] [domain=220:290] plot ({2*cos(\x)},{sin(\x)});
  \draw[domain=30:340, postaction={decorate}] plot ({2*cos(\x)},{sin(\x)});
  \draw (1.73,.5) to [out = 320, in = 220] (2.73,-.5) to [out=40, in=30] (2.25,.25);
    \draw[white, line width =5] (0,0) ellipse (.5cm and .25cm);
  \draw[slred] (0,0) ellipse (.5cm and .25cm);
    \draw[white, line width =5] (-2,0) ellipse (.5cm and .25cm);
  \draw[slred] (-2,0) ellipse (.5cm and .25cm);
   \draw[slred,->] (-.5,.125);
   \draw[slred,->] (-2.5,.125);
    \begin{scope}[xshift=-2cm,scale=-1]
  \draw[white, line width=10] [domain=220:290] plot ({2*cos(\x)},{sin(\x)});
  \draw[domain=220:290] plot ({2*cos(\x)},{sin(\x)});
    \draw[white, line width=10] [domain=185:205] plot ({2*cos(\x)},{sin(\x)});
  \draw[domain=185:205] plot ({2*cos(\x)},{sin(\x)});
  \end{scope}
  \draw[white, line width=10] [domain=185:205] plot ({2*cos(\x)},{sin(\x)});
  \draw[domain=185:205] plot ({2*cos(\x)},{sin(\x)});
  \end{tikzpicture}
\end{align*}
  (with meridians shown in \textcolor{slred}{red})
  has linking matrix
  \[
    \begin{bmatrix}
      1 & 1 \\
      1 & -1
    \end{bmatrix}
  \]
  A cohomology class \(\omega \in \homol[1]{S^3 \setminus L; A}\) is determined by two elements \(a_1, a_2 \in A\), the values on the meridians of each component.
  This determines an \(A\)-structure on \(\surg{L}\) if and only if
  \[
    \begin{bmatrix}
      1 & 1 \\
      1 & -1
    \end{bmatrix}
    \begin{bmatrix}
      a_1 \\
      a_2
    \end{bmatrix}
    =
    0
  \]
  that is if \(a_1 = a_2\) and \(2a_1 = 0\).
  We conclude that \(\homol[1]{\surg{L}; A} \iso A / 2A\).
\end{example}

\begin{theorem}
  \Cref{thm:surgery facts} generalizes in the obvious way to \(A\)-manifolds:
  \begin{thmenum}
    \item we can represent any \(A\)-manifold as surgery on an \(A\)-colored link whose coloring is compatible with the framing,
    \item any two presentations are related by isotopy and colored blow-up moves and their inverses, and
    \item when proving invariance it suffices to consider negative blow-up moves and special blow-up moves of both signs. \qedhere
  \end{thmenum}
\end{theorem}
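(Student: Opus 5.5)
The plan is to reduce each part to the corresponding part of \cref{thm:surgery facts} by tracking the \(A\)-coloring through the moves, using \cref{thm:A colorings are kernel} to translate \(A\)-structures into colorings in \(\ker \lk(L)\).

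For (a), take an \(A\)-manifold \((M,\omega)\). By \cref{thm:surgery facts}(1) we can write \(M \iso \surg{L}\) for some framed link \(L\). The inclusion \(\comp L \hookrightarrow \surg{L}\) induces the surjection \(\homol[1]{\comp L;\ZZ} \to \homol[1]{\surg{L};\ZZ}\). Pulling \(\omega\) back along this inclusion gives a class in \(\cohomol[1]{\comp L;A}\), which by \cref{thm:A coloring cohomology} is an \(A\)-coloring of \(L\). Because this class factors through \(\homol[1]{\surg{L}}\), it kills the framing curves, i.e.\ the columns of \(\lk(L)\). Conversely, \cref{thm:A colorings are kernel} shows that every coloring in \(\ker\lk(L)\) comes from a unique \(\omega\), so the coloring determines the \(A\)-structure.

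For (b) and (c), take two colored presentations \((L,\omega)\) and \((L',\omega')\) of homeomorphic \(A\)-manifolds. Compose the homeomorphism with the sequence of framed isotopies and blowups supplied by \cref{thm:surgery facts}(2)/(3). Each elementary move induces a canonical homeomorphism of surgered manifolds that is the identity away from a ball (isotopy) or a solid torus region (blowup). We push the \(A\)-structure across each step and check that the induced colorings change in the "obvious" way. Under isotopy the coloring is simply carried along. Under a blowup, we compute via meridians. The new \(\pm1\)-framed unknot \(U\) gets color equal to the product of the colors of the strands passing through it, taken with signs from the orientations. This is forced because the meridian of \(U\) is homologous, in the surgered manifold, to the sum of the meridians of the strands it encircles, using the framing relation from the column of \(\lk\) for \(U\). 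Meanwhile the strands keep their colors after the twist, since meridians are preserved by the twist homeomorphism of the solid torus. In the special case, where \(U\) is unlinked, its color must be \(e\), since its column forces \(\pm a_U = 0\). One also checks that the new coloring still lies in \(\ker\lk\). Since the full twist changes the linking entries of the strands by \(\mp\) the products of their linking numbers with \(U\), the kernel condition is preserved exactly when \(U\) carries the product color. Thus "colored blowup" is well defined and bijective on compatible colorings, and (b) and (c) follow from the uncolored statements.

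The main obstacle is the meridian bookkeeping in the blowup step. We must verify that the homeomorphism \(\surg{L} \iso \surg{L \cup U}\) maps the meridians of the old strands to the meridians of the twisted strands. We must also pin down the color of \(U\), including orientation signs. I would do this by a direct Mayer--Vietoris or linking-matrix computation in \(\homol[1]\), using the cohomological interpretation of \cref{thm:A coloring cohomology}. Everything else is formal once this local check is done.
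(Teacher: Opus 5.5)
Your local computations are correct, and they go further than the paper does. The paper states this theorem without proof, remarking only that \(\homol[1]{\comp L;\ZZ}\to\homol[1]{\surg{L};\ZZ}\) is onto and that the color of a new blow-up component is forced by \cref{thm:A colorings are kernel}.

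\begin{itemize}
\item Part (a) is fine.
\item The framing relation for \(U\) does force its color. Watch the sign, though: for a \(\pm1\)-framed \(U\) the relation is \(\omega(U)^{\pm1}\prod_C\omega(C)^{\lk(U,C)}=e\), so the color also depends on the framing sign. The paper's \(\operatorname{Bl}^{-}\) gives \(a^{-1}\), not \(a\).
\item The twist homeomorphism does carry meridians of old strands to meridians of twisted strands.
\item Your linking-number check that the kernel condition survives the blow-down is right.
\end{itemize}

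The genuine gap is the sentence ``(b) and (c) follow from the uncolored statements.'' \Cref{thm:surgery facts}(2),(3) only says that if \(\surg{L}\cong\surg{L'}\) then \emph{some} sequence of moves connects \(L\) to \(L'\). It says nothing about which homeomorphism that sequence induces. Let \(g\colon\surg{L}\to\surg{L'}\) be the composite of the move homeomorphisms, and let \(h\) be the given structure-preserving homeomorphism, so \(h^{*}\omega'=\omega\). Pushing \(\omega\) along the moves lands you on \(\omega''=(g^{-1})^{*}\omega=(hg^{-1})^{*}\omega'\). This differs from \(\omega'\) by the action of an essentially arbitrary self-homeomorphism \(hg^{-1}\) of \(\surg{L'}\) on \(\cohomol[1]{\surg{L'};A}\). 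That action is typically nontrivial; for example, orientation-preserving self-homeomorphisms of \(T^3\) act on \(\cohomol[1]{T^3;A}\cong A^3\) through \(SL_3(\ZZ)\). So as written you only show that \((L,\omega)\) is related by colored moves to \((L',\omega'')\) for some \(\omega''\) in the mapping-class-group orbit of \(\omega'\). The same objection applies to (c).

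To close the gap you need one of two inputs:
\begin{itemize}
\item The refined form of Kirby's theorem: for any given orientation-preserving homeomorphism \(h\), the moves can be chosen so that their induced homeomorphism is isotopic to \(h\). This is implicit in the four-dimensional proofs, where \(h\) is used to glue the two \(2\)-handlebodies, and it is the same refinement spin Kirby calculus relies on.
\item Alternatively, a direct argument that the action of every self-homeomorphism on \(\cohomol[1]{M;A}\) is realized by a loop of colored moves.
\end{itemize}
With that input your bookkeeping finishes (b) and (c). The meridian computation you flag as the main obstacle is the easy part; controlling the homeomorphism is the real issue.
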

In particular, we observe that when adding a new component during a blowup move its color is automatically determined by the condition in \cref{thm:A colorings are kernel}.

\subsection{Surgery colors and \texorpdfstring{\(A\)}{A}-manifold invariants}
\label{sec:surgery colors and A-mfld invariants}

We can now explain how to construct invariants of \(A\)-manifolds in terms of surgery presentations.

\begin{definition}
  Let \(\mathcal{C}\) be a faithfully \(A\)-graded ribbon fusion category.
  The (graded) \defemph{\(S\)-matrix} of \(\mathcal{C}\) is a square matrix \(s\) whose rows and columns are indexed by isomorphism classes of simple objects in the trivial component \(\mathcal{C}_e\).
  (For the usual \(S\)-matrix we would range over all simple isomorphism classes.)
  Its entries are the Reshetikhin-Turaev invariants 
  \[
    s_{X,Y} = \linkinv{\beta_{Y,X} \circ \beta_{X,Y}}{\mathcal{C}}
  \]
  of Hopf links colored by objects \(X,Y\) in the corresponding isomorphism classes.
  We say that \(\mathcal{C}\) is  \defemph{graded-modular} if the graded \(S\)-matrix is nonsingular.%
  \note{%
    These are a special case of Turaev's \cite{Turaev2010} modular \(A\)-crossed categories:
    we can view \(\mathcal{C}\) as an \(A\)-crossed ribbon category with trivial \(A\)-action.
  }
\end{definition}

If we forget the grading and take \(\mathcal{C}_{e}\) to be all of \(\mathcal{C}\) we recover the usual definition of modular category.
However, for a given category being modular and graded-modular are not necessarily equivalent. For example, any Ising-like modular fusion category $\mathcal{C}$ is $\ZZ/2\ZZ$-graded with $\mathcal{C}_e \simeq \Rep(\ZZ/2\ZZ)$, which is symmetric, hence not modular.

\begin{definition}
  \label{def:global-dim-and-blow-up}
  For a graded-modular category \(\mathcal{C}\), define
  \begin{align}
    \label{eq:blow-up-coeff}
    \Delta_{\pm}
    &\defeq
    \sum_{|X| = e}
    \theta_X^{\pm 1}
    \dim(X)
    \\
    \label{eq:global-dim}
    \globaldim
    &\defeq
    \sum_{|X| = e}
    \dim(X)^2
  \end{align}
  where the sums are over isomorphism classes of simple objects in degree \(e \in A\).
\end{definition}
These are graded versions of the parameters usually appearing in the surgery construction: for example, \(\globaldim\) is the global dimension of the trivial component \(\mathcal{C}_e\) (instead of the whole category).
Graded-modularity implies that none are zero and that \(\Delta_{+}\Delta_{-} = \globaldim^2\).

\begin{definition}
  Let \(\mathcal{C}\) be a graded-modular category.
  For each \(a \in A\) the \defemph{surgery color} is the formal linear combination
  \begin{equation}
    \surgcol a \defeq \sum_{|X| = a} \dim(X) X
  \end{equation}
  over all isomorphism classes of simple object in degree \(a\), weighted by their quantum dimensions.
\end{definition}

\begin{definition}
  \label{def:rt-manifold-definition}
  Let \((M, \omega)\) be a \(3\)-manifold with \(A\)-structure \(\omega\) presented as surgery on a framed \(A\)-colored link \((L, \omega)\) with \(n\) components.
  Let \(\mathcal{C}\) be a graded-modular category.
  Write \(\sigma^{+}(L)\) for the number of positive eigenvalues of the linking matrix of \(L\), and similarly \(\sigma^{-}(L)\) for the number of negative eigenvalues (so the signature is \(\sigma(L) = \sigma^{+}(L) - \sigma^{-}(L)\)).
  For \(\mathcal{C}\) a graded-modular category, we define the \defemph{Reshetikhin-Turaev invariant} of \((M, \omega)\) to be
  \begin{equation}
    \label{eq:rt-manifold-definition}
    \rtinv{M, \omega}{\mathcal{C}}
    \defeq
    \Delta_{+}^{-\sigma^{+}(L)}
    \Delta_{-}^{-\sigma^{-}(L)}
    \globaldim^{-1}
    \linkinv{L; \surgcol{\omega(1)}, \dots, \surgcol{\omega(n)}}{\mathcal{C}}
  \end{equation}
  where the right-hand side is the \(\mathcal{C}\)-coloring of \(L\) assigning a component \(i\) with \(A\)-coloring \(\omega(i)\) the surgery \(\mathcal{C}\)-color \(\surgcol{\omega(i)}\).
\end{definition}

\begin{remark}
  The surgery construction can be viewed as generating a compact oriented \(4\)-manifold \(W\): we think of \(L\) as lying in \(S^3 = \partial B^4\) and add \(2\)-handles instead of solid tori to get a manifold \(W\) with \(\partial M = \surg{L}\).
  The linking matrix of \(L\) is a presentation matrix for the intersection form of \(W\).
  The \((2+1)\)-dimensional Reshetikhin-Turaev surgery TQFT is the boundary theory of a \((3+1)\)-dimensional Crane-Yetter theory whose value on \(W\) is related to the signature term in \cref{eq:rt-manifold-definition}; adding it ensures that the Reshetikhin-Turaev invariant of \(M\) does not depend on the choice of \(W\) with \(\partial W = M\).
\end{remark}

\begin{theorem}
  \( \rtinv{M, \omega}{\mathcal{C}} \) is a homeomorphism invariant of \((M, \omega)\).
  In particular, it does not depend on the choice of surgery presentation \((L, \omega)\).
\end{theorem}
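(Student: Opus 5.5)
The plan is to run the standard Kirby-calculus argument, organized by \cref{thm:surgery facts} and its \(A\)-manifold generalization. Two \(A\)-colored surgery presentations of the same \((M,\omega)\) differ by framed isotopy, negative blow-ups, and special blow-ups of either sign, where the color of any new component is forced by \cref{thm:A colorings are kernel}. It therefore suffices to show that the right-hand side of \cref{eq:rt-manifold-definition} is unchanged by each of these moves.

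\textbf{Framed isotopy.} Isotopy invariance is immediate. The Reshetikhin-Turaev link invariant is an isotopy invariant, and the linking matrix changes only by congruence, so \(\sigma^{\pm}(L)\) are preserved. The colors \(\omega(i)\) are carried along with the components.

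\textbf{Special blow-ups.} Here \(L' = L \du U_{\pm 1}\), with \(U_{\pm1}\) an unlinked \(\pm1\)-framed unknot. The \(A\)-structure condition forces its color to satisfy \(\pm a = 0\), so \(a = e\). The link invariant is multiplicative under split union, so it acquires a factor of \(\linkinv{U_{\pm1};\surgcol{e}}{\mathcal{C}} = \sum_{|X|=e}\theta_X^{\pm1}\dim(X)^2\). One has to check that this equals \(\Delta_\pm\) under the sign conventions for twists in \cref{eq: twist conventions}, possibly after correcting which twist accompanies which framing. The new linking matrix is \(\lk(L)\oplus(\pm1)\), so \(\sigma^{\pm}\) increases by one and the normalization contributes \(\Delta_\pm^{-1}\), which cancels the new factor.

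\textbf{Negative (non-special) blow-ups.} This is the main step. Let \(U\) be a \(+1\)-framed unknot encircling strands \(X_1,\dots,X_k\), in the convention of \cref{fig:blow-up moves}. Blowing it down replaces the strands by a full twist and changes framings and linking numbers. On the homology side the move is a change of basis that adds a \(\pm1\) diagonal summand, so \(\sigma^\pm\) change exactly as in the special case. Compatibility of colors forces \(|U| = e\) after taking into account the kernel condition, because the encircled strands together with the unknot must satisfy \(\lk\)-relations. More carefully, the unknot's color \(u\) satisfies \(u \cdot(\text{framing}) \cdot \prod a_j^{\pm} = e\).

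The key local identity is a graded version of the usual Kirby/handle-slide lemma. For a homogeneous object \(Y\) of degree \(b\), encircling it by \(\surgcol{u}\) with a \(\pm1\)-framed unknot should equal \(\Delta_\pm\) times a full twist \(\theta_Y^{\mp1}\), up to the appropriate degree matching. I would derive this the standard way. First decompose \(X_1\otimes\cdots\otimes X_k\) into simples, which all have the same degree \(b\) by homogeneity. Then reduce to a single simple \(Y\) of degree \(b\), and compute \(\sum_{|X|=u}\dim X\,\theta_X^{\pm1}\) times the double braiding of \(X\) with \(Y\) using the balancing \(\theta_{X\otimes Y}\), which brings out \(\theta_Y^{\mp1}\) times a sum over \(Z\subset X\otimes Y\).

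The obstacle is showing that this sum equals \(\Delta_\pm\) independently of \(Y\). In the ungraded case this follows from modularity via the Gauss-sum identity. In the graded case I would use that tensoring with \(Y\) induces a dimension-weighted bijection between degree-\(u\) and degree-\(ub\) components, so that \(\sum_{|X|=u}\dim X\,\dim(X\otimes Y\to Z)\) reorganizes into \(\dim Y\) times the sum over \(|Z| = ub\). Graded-modularity of the \(S\)-matrix is exactly what ensures the degree-\(e\) sum controls all degrees. Concretely, the \(\surgcol{e}\)-colored loop kills nontrivially graded-braiding simples. This is where I expect most of the work, and I would first try to prove it as a lemma that \(\Omega\)-colored encirclement acts on \(\mathcal{C}_b\) by \(\Delta_\pm\theta^{\mp1}\), mirroring Turaev's proof in the homotopy QFT setting \cite{Turaev2010}.
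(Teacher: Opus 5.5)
Your outline matches the paper's proof, which simply defers to Turaev's Theorem VII.2.3: framed isotopy, then blow-ups, reduced by semisimplicity to a single simple strand that is ``handled directly''. The problem is in how you handle that strand.

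First, the claim that compatibility forces \(|U| = e\) is false except for special blow-ups. Your corrected relation is the right one. If you orient \(U\) so that its linking number with the (downward-oriented) bundle equals its framing \(\pm1\), the kernel condition forces \(u = b^{-1}\), where \(b\) is the total degree of the bundle. This is the color \(a^{-1}\) the paper records for \(\operatorname{Bl}^{-}\). With that orientation, the double braiding that appears has the same sign as the twist on \(U\).

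You never use \(u = b^{-1}\). Instead you pose, as the main obstacle, showing that \(\sum_{|Z|=ub}\theta_Z^{\pm1}\dim(Z)^2\) equals \(\Delta_\pm\) independently of \(Y\), to be settled by graded-modularity and a killing property of \(\surgcol{e}\). That claim is false whenever \(ub \neq e\), because degree-\(c\) Gauss sums genuinely depend on \(c\). Take \(\mathcal{C}\) pointed, which is graded-modular with \(\mathcal{C}_e = \mathrm{Vec}\) and \(\Delta_\pm = 1\). There \(\sum_{|Z|=c}\theta_Z\dim(Z)^2 = \theta_c\). Likewise, with \(u = e\) the encircling loop would be colored by \(\tu\) and act trivially, while blowing down introduces \(\theta_Y^{\mp1} \neq 1\). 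So the route you sketch for the key lemma does not go through.

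The missing step is one line. Since \(u = b^{-1}\), every simple \(Z \subset X \otimes Y\) with \(|X| = u\) lies in \(\mathcal{C}_e\). Your computation via the balancing axiom and \(\sum_{|X|=u}\dim(X) N_{XY}^{Z} = \dim(Y)\dim(Z)\) then gives exactly \(\theta_Y^{\mp1}\sum_{|Z|=e}\theta_Z^{\pm1}\dim(Z)^2 = \theta_Y^{\mp1}\Delta_\pm\).

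No modularity or killing property is used here, in either the graded or the ungraded case. Your remark that the ungraded identity comes from modularity is also inaccurate. Graded-modularity enters only to guarantee \(\Delta_\pm \neq 0\) (indeed \(\Delta_+\Delta_- = \globaldim^2\)), so that the normalization in \cref{eq:rt-manifold-definition} is defined.

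Finally, your special blow-up value \(\sum_{|X|=e}\theta_X^{\pm1}\dim(X)^2\) is the correct constant. The displayed definition \cref{eq:blow-up-coeff} must be read with \(\dim(X)^2\) for \(\Delta_+\Delta_- = \globaldim^2\) to hold.
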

\begin{proof}
  See \cite[Theorem VII.2.3]{Turaev2010}.
  The essential idea is the same as in the ungraded case \cite{Reshetikhin1991}: because \(\linkinv{L}{\mathcal{C}}\) is an isotopy invariant of \(L\) it suffices to consider the blowup moves.
  By semi-simplicity of \(\mathcal{C}\) turn these can be reduced to the case of a single strand, which can be handled directly.
\end{proof}

\subsection{\texorpdfstring{\(A\)}{A}-manifold invariants under zesting}

\begin{definition}
  Suppose \((M, \omega)\) is obtained by surgery along a framed \(A\)-colored link \((L, \omega)\).
  For ribbon zesting data \(\zeta = (\lambda, \nu, t, \epsilon)\) we define
  \begin{equation}
    \label{eq:zest manifold inv def}
    \rtinv{M, \omega}{\zeta} \defeq 
    \zest{e, L, \omega}
    \prod_{i} \epsilon(\omega(i))
  \end{equation}
  where \(\omega(i) \in A\) is the color of the \(i\)th component of \(L\) and the product is over all components.
\end{definition}

Here we choose the base region color to be \(e\), but by \cref{thm:base color independence} any choice would give the same answer.
Below we check the blow-up move with arbitrary region colorings in order to show it holds regardless of where we apply it in a diagram of \(L\).

\begin{remark}
  \label{rem:surgery analogies}
  We can think of \(\rtinv{M, \omega}{\zeta}\) as an analogue of the surgery invariants in \cref{def:rt-manifold-definition}.
  We have the following table of analogies:
  \begin{center}
    \begin{tabularx}{\textwidth}{X X}
      set of iso classes of simples in degree \(a\)
      &
      the single element \(a\)
      \\
      quantum dimension of \(a\)
      &
      \(\epsilon(a)\)
      \\
      twist of \(a\) 
      &
      \(f(a) = t(a) \epsilon(a)\)
      \\
      surgery color for degree \(a\)
      &
      \(
        \surgcol{a}[\zeta] = \epsilon(a) a
      \)
      \\
      link invariant
      &
      \(\zest{L, \omega}\)
      \\
      manifold invariant
      &
      \(\rtinv{L, \omega}{\zeta}\)
      \\
      global dimensions
      &
      \(
      \Delta_{\pm}^{\zeta}
      =
      \globaldim^{\zeta}
      =1
      \)
    \end{tabularx}
  \end{center}
  To see the last claim, observe that for the trivial component \(t(e)=1\) and \(\epsilon(e) = 1\), so 
  \begin{align*}
    \Delta_{\pm}^{\zeta}
    &=
    f(1)^{\pm 1}
    \epsilon(1)
    =
    1
    \\
    \globaldim^{\zeta}
    &=
    \epsilon(1)^2
    =
    1.
  \end{align*}
  There are no sums above or in the surgery color \(\surgcol{a}[\zeta]\) because there is a single ``object'' in each degree.
  This analogy suggests that there is an algebraic object \(\mathcal{P}(\zeta)\) associated to the data \(\zeta\) that can be used to directly define surgery invariants.
\end{remark}

\begin{theorem}
  \label{thm:manifold invariants factor}
  Let \(\mathcal{C}^{\zeta}\) be a ribbon zesting of \(\mathcal{C}\) with universal grading group \(A\) using data \(\zeta\).
  \begin{enumerate}[label={(\alph*)}]
    \item If \(\mathcal{C}\) is graded-modular, then so is \(\mathcal{C}^{\zeta}\).
    \item Let \((M, \omega)\) be a \(3\)-manifold with \(A\)-structure \(\omega\) presented as surgery on a framed \(A\)-colored link \((L, \omega)\).
      Then
      \begin{equation*}
        \rtinv{M, \omega}{\mathcal{C}^{\zeta}}
        =
        \rtinv{M, \omega}{\mathcal{C}}
        \rtinv{L, \omega}{\zeta}
      \end{equation*}
    \item \(\rtinv{L, \omega}{\zeta}\) is an invariant of \((M, \omega)\): in particular, it is independent of the framed link \(L\) used to present \(M\).
      \qedhere
  \end{enumerate}
\end{theorem}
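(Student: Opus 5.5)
The plan is to prove (b) first, since both (a) and (c) follow from it together with comparisons of the graded surgery parameters.

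\textbf{Part (b).} Expand the surgery colors multilinearly. Then \(\linkinv{L; \surgcol{\omega(1)}, \dots}{\mathcal{C}^{\zeta}}\) is a sum over homogeneous simple colorings \((X_1,\dots,X_n)\) with \(|X_k|=\omega(k)\), each weighted by \(\prod_k \dim^{\zeta}(X_k)\).
\begin{itemize}
\item For each such coloring, \cref{thm:factorization existence} and \cref{thm:factorization still holds} give \(\linkinv{L}{\mathcal{C}^{\zeta}} = \linkinv{L}{\mathcal{C}}\,\zest{e,L,\omega}\) as scalars. The zest factor depends only on \(\omega\), so it pulls out of the sum.
\item By the zested trace formulas \cref{eq: left zested trace,eq: right zested trace}, \(\dim^{\zeta}(X)=\epsilon(a)\dim(X)/\dim(\lambda(a))\). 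Hence \(\surgcol{a}[\mathcal{C}^{\zeta}]=\epsilon(a)\dim(\lambda(a))^{-1}\surgcol{a}\), and this produces the factor \(\prod_k\epsilon(\omega(k))\), up to the \(\dim\lambda\) signs.
\item I expect those \(\dim(\lambda(a))=\pm1\) signs to be absorbed by the normalization already present in the zested cups and caps; otherwise one tracks them against \cref{eq:twist zesting epsilon}.
\end{itemize}
It then remains to compare the graded parameters \(\Delta_\pm\) and \(\globaldim\) of \(\mathcal{C}\) and \(\mathcal{C}^{\zeta}\). These involve only degree-\(e\) simples. Since \(\lambda(e,e)=\tu\), \(\nu\) is normalized, and \(t(e)=\epsilon(e)=1\), the zested twists and dimensions agree with the original ones on \(\mathcal{C}_e\). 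So these parameters are unchanged, the signature prefactors coincide, and (b) follows.

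\textbf{Part (a).} The graded \(S\)-matrix entries are Hopf-link invariants colored by degree-\(e\) objects. By the factorization, each zested entry equals the original entry times \(\zest{}\) of an \(e\)-colored Hopf link. With base color \(e\) and both strands colored \(e\), every \(\lambda\) is \(\tu\) and every \(\phi\) is \(1\), so the zested graded \(S\)-matrix equals the original one. Nonsingularity, and hence graded-modularity, is preserved.

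\textbf{Part (c).} The cleanest route is to deduce it from (b). Both \(\rtinv{M,\omega}{\mathcal{C}^{\zeta}}\) and \(\rtinv{M,\omega}{\mathcal{C}}\) are \(A\)-manifold invariants by the theorem from \cite{Turaev2010}. If the original invariant is nonzero, the quotient is independent of \(L\). This fails when \(\rtinv{M,\omega}{\mathcal{C}}=0\), which is the main obstacle, so the fallback is a direct check via Kirby moves:
\begin{itemize}
\item \cref{thm:tangle relations} and \cref{thm:base color independence} give framed isotopy invariance.
\item For a special blowup, the new \(e\)-colored \(\pm1\)-framed unknot contributes \(t(e)^{\pm1}\epsilon(e)^2=1\).
\item For a negative blowup with arbitrary region colors, the added unknot has a color \(c\) forced by \cref{thm:A colorings are kernel}. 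We must show that its \(\epsilon(c)\), times the crossings it creates with the strands passing through it, equals the change in \(\zest{}\) caused by the full twist on those strands.
\end{itemize}
The strategy for this last step is to write the full twist on \(k\) strands as the twist of their tensor product in \(\mathcal{C}^{\zeta}\) and use \cref{eq: zested twist}, together with the ribbon relation \(\theta_{X\zestts Y}\) expressed via \(t\) and \(\nu\). An equivalent approach compares, through \cref{thm:factorization still holds}, with the known blowup invariance of both \(\mathcal{C}\) and \(\mathcal{C}^{\zeta}\) for a single homogeneous object, where the scalars are nonzero and can therefore be divided. This negative-blowup computation is where I expect the difficulty to lie.
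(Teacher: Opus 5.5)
Your plan is the paper's proof: (a) and (b) go as you describe (the paper leaves implicit your check that \(\Delta_{\pm}\) and \(\globaldim\) are unchanged), and for the negative blowup in (c) the paper takes your second option---compare Turaev's local identity (\(\Delta_{-}\) times an identity) in \(\mathcal{C}\) and in \(\mathcal{C}^{\zeta}\) beside a spectator strand \(W\) of degree \(i\), with the closed component colored by \(\surgcol{a^{-1}}\), respectively \(\surgcol{a^{-1}}[\mathcal{C}^{\zeta}]=\epsilon(a^{-1})\surgcol{a^{-1}}\), factor via \cref{thm:factorization still holds}, and cancel \(\Delta_{-}\neq 0\) to get \(\epsilon(a^{-1})\,\zest{i,\operatorname{Bl}^{-}(a_1,\dots,a_n)}=\id\)---so no direct full-twist computation is needed. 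For the \(\dim\lambda(a)\) signs, use \cref{thm:qdims square to one} with factorization on an unknot, which gives \(\dim^{\zeta}(X_a)=\epsilon(a)\dim(X_a)\) exactly because the \(1/\dim\lambda(a)\) in \(\coev^{\zeta}_X\) cancels the closed \(\lambda(a)\)-loop (the displayed zested trace formulas are off by this sign); do not fall back on \cref{eq:twist zesting epsilon}, which would turn \(\epsilon(a)/\dim\lambda(a)\) into \(\epsilon(a^{-1})\) and give the wrong product.
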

\begin{proof}
  (a) Graded-modularity is determined by the trivially-graded piece, which is invariant under zesting.

  (b) By definition
  \[
    \rtinv{M, \omega}{\mathcal{C}^{\zeta}}
    =
    \linkinv{L; \surgcol{\omega(1)}[\mathcal{C}^{\zeta}], \dots, \surgcol{\omega(n)}[\mathcal{C}^{\zeta}]}{\mathcal{C}^{\zeta}}
  \]
  where \(\omega(i)\) is the value of \(\omega\) on the (meridian of the) \(i\)th component.
  This is a \emph{homogeneous} sum, so
  \begin{align*}
    \rtinv{M, \omega}{\mathcal{C}^{\zeta}}
    &=
    \linkinv{L; \surgcol{\omega(1)}[\mathcal{C}^{\zeta}], \dots, \surgcol{\omega(n)}[\mathcal{C}^{\zeta}]}{\mathcal{C}^{\zeta}}
    \\
    &=
    \sum_{j_1 \in \indset{\omega(1)}}
    \cdots
    \sum_{j_n \in \indset{\omega(n)}}
    \qdim[\mathcal{C}^{\zeta}](X_{j_1})
    \cdots
    \qdim[\mathcal{C}^{\zeta}](X_{j_n})
    \linkinv{L, X_{j_1}; \cdots X_{j_n}}{\mathcal{C}^{\zeta}}
    \\
    &=
    \sum_{j_1 \in \indset{\omega(1)}}
    \cdots
    \sum_{j_n \in \indset{\omega(n)}}
    \qdim[\mathcal{C}](X_{j_1})
    \epsilon(a_{j_1})
    \cdots
    \qdim[\mathcal{C}](X_{j_n})
    \epsilon(a_{j_n})
    \zest{L, \omega}
    \linkinv{L, X_{j_1}; \cdots X_{j_n}}{\mathcal{C}}
    \\
    &=
    \rtinv{L, \omega}{\zeta}
    \rtinv{M, \omega}{\mathcal{C}}
  \end{align*}
  where we used \cref{thm:qdims square to one} to understand the change in quantum dimensions.

  (c)
  By \cref{thm:tangle relations} \(\zest{L, \omega}\) is an isotopy invariant, so it suffices to check the surgery moves.
  First we check the special blowup moves.
  An unlinked, \(\pm 1\)-framed unknot must always have color \(e \in A\).
  By \cref{thm:qdims square to one} the value of \(\zest{}\) on an \(e\)-colored unknot is \(\epsilon(e) = 1\) regardless of the adjacent region color, so adding or removing a \(\pm 1\)-framed unknot will not change the value of \(\zest{}\).
  This agrees with \cref{rem:surgery analogies}: the ``global dimension'' is \(1\).

  For the general case, write \(\operatorname{Bl}^{-}(a_1, \dots, a_n)\) for the \(A\)-colored diagram
  \begin{align*}
\text{Bl}^-(a_1,a_2, \ldots, a_n)=
    \begin{tikzpicture}[scale=.5,line width=1,baseline=-.875cm, decoration={
    markings, mark=at position 0.5 with {\arrow{<}}}]
  \draw (-1,2.5)--(-1,-5) node[below] {\small $a_1 \phantom{a_2}$};
  \draw (-.5,2.5)--(-.5,-5) node[below] {\small $a_2$};
  \draw (1.25,2.5)--(1.25,-5) node[below] {\small $a_n$};
  \draw[white, line width=10] [domain=230:320] plot ({2*cos(\x)},{sin(\x)});
  \draw[domain=20:330, postaction={decorate}] plot ({2*cos(\x)},{sin(\x)});
  \draw[white, line width=10] (-.5,0)--(-.5,2.5);
  \draw (-.5,0)--(-.5,2.5);
  \draw[white, line width=10] (-1,0)--(-1,2.5);
  \draw (-1,0)--(-1,2.5);
   \draw[white, line width=10] (1.25,0)--(1.25,2.5);
  \draw (1.25,0)--(1.25,2.5);
  \draw (1.73,-.5) to [out = 50, in = 150] (2.73,.5) to [out=-30, in= 330] (2.25,-.25);
  \draw (0.375,2) node {$\cdots$};
  \draw (0.375,-2) node {$\cdots$};
    \draw (0.375,-4.5) node {$\cdots$};
   \draw[fill=white] (-1.5, -3.75) rectangle node {$-1$} (1.75,-2.75);
  \end{tikzpicture}
  \end{align*}
  where the box indicates a full twist.
  Compatibility with the framing says that the closed component of \(\operatorname{Bl}^{-}(a_1, \dots, a_n)\) has color \(a^{- 1}\), where \(a = a_1 \cdots a_n\).
  Invariance of \(\rtinv{M}{\mathcal{C}}\) under the blow-up moves follows from showing that \(\linkinv{\operatorname{Bl}^{-}}{\mathcal{C}}\) (with all components colored by surgery colors) is an identity map times some appropriate powers of \(\Delta_{\pm}\) and \(\globaldim\).
  (One also needs to check \(\operatorname{Bl}^{+}\) in the special case with no encircled strands, which is trivial.)
  In parallel we need to show that \(\zest{i, \operatorname{Bl}^{-}(a_{1}, \dots, a_{n})}\) is \(\epsilon(a^{-1}) \prod_{j = 1}^{n} \epsilon(a_{j})\) times the identity map for every \(i, a_{1}, \dots, a_{n} \in A\).
  (Recall that the zest invariant \eqref{eq:zest manifold inv def} of a \(3\)-manifold includes extra factors of \(\epsilon\).)

  We use the same strategy as in \cref{thm:identities preserved}.
  Let \(B\) be a \(\mathcal{C}\)-coloring of \(\operatorname{Bl}^{-}(a_1, \dots, a_n)\) with surgery colors \(\surgcol{a_{i}}\) coloring the open strands and \(\surgcol{a^{-1}}\) on the closed component.
  Then blowup invariance of the \(A\)-manifold Reshetikhin-Turaev invariant gives
  \[
    \linkinv{B}{\mathcal{C}}
    =
    \Delta_{-}
    \id_{X_{1}} \otimes \cdots \id_{X_{n}}
  \]
  (More specifically: this claim is equivalent to the key step \cite[eq.~VII.2.3.a]{Turaev2010} in the proof of \cite[Theorem VII.2.3]{Turaev2010}.)
  If \(S\) is a single strand colored by a homogeneous object \(W\) in degree \(i\) we similarly have
  \[
    \linkinv{W \du B}{\mathcal{C}}
    =
    \Delta_{-}
    \id_{W} \otimes \id_{X_{1}} \otimes \cdots \id_{X_{n}}
  \]
  The same argument shows that
  \(
    \linkinv{W \du B}{\mathcal{C}^{\zeta}}
  \)
  is \(\Delta_{-}\) times an identity map.
  On the other hand by \cref{thm:factorization still holds}
  \begin{equation*}
    \linkinv{W \du B}{\mathcal{C}^{\zeta}}
    =
    \epsilon(a^{-1}) \prod_{j = 1}^{n} \epsilon(a_{j})
    \linkinv{W \du B}{\mathcal{C}}
    \warp
    \zest{i, \operatorname{Bl}^{-}(a_{1}, \dots, a_{n})}
  \end{equation*}
  which is only possible if 
  \begin{equation*}
    \epsilon(a^{-1}) \prod_{j = 1}^{n} \epsilon(a_{j})
    \zest{i, \operatorname{Bl}^{-}(a_{1}, \dots, a_{n})}
  \end{equation*}
  is an identity map as required.
  The extra factors of \(\epsilon\) appear just as in the proof of part (b):
  a surgery color \(\surgcol{b}\) includes factors of \(\dim Y\) for objects \(|Y| = b\) and their quantum dimensions transform by \(\epsilon(b)\) under zesting.
\end{proof}

\subsubsection{Efficient computation of the zest invariant of an \texorpdfstring{$A$}{A}-manifold}
\label{sec:zest manifold algorithm}

\cref{thm:manifold invariants factor} showed that, like simply colored tangles and links, the Reshetikhin-Turaev invariants of $3$-manifolds with $A$-structure differ by an invariant that depends only on the zesting data. The following corollary is obvious from the argument used in \cref{thm: efficient algorithm} to show that $\mathcal{J}_\zeta(L)$ can be computed in polynomial time.

\begin{corollary}
\label{cor: A-mfld invariant efficient} 
Let $\mathcal{C}$ be an $A$-modular fusion category. Then if $\zeta$ is ribbon zesting data for $\mathcal{C}$, the $A$-manifold invariants $\mathcal{Z}_{\mathcal{C}}(M,\omega)$ and  $\mathcal{Z}_{\mathcal{C}^\zeta}(M,\omega)$ are polynomial time Turing equivalent. 
\end{corollary}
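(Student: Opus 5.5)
The plan is to reduce \cref{cor: A-mfld invariant efficient} to the factorization in \cref{thm:manifold invariants factor}(b) together with the polynomial-time algorithm of \cref{thm: efficient algorithm}. The input to either problem is an \(A\)-manifold \((M,\omega)\) presented as surgery on a framed \(A\)-colored link diagram \((L,\omega)\), where the coloring satisfies the kernel condition of \cref{thm:A colorings are kernel}. Note that \(\mathcal{C}^\zeta\) is again graded-modular by \cref{thm:manifold invariants factor}(a), so both invariants are defined. By (b),
\[
  \rtinv{M,\omega}{\mathcal{C}^{\zeta}} = \rtinv{M,\omega}{\mathcal{C}}\,\rtinv{L,\omega}{\zeta},
  \qquad
  \rtinv{L,\omega}{\zeta} = \zest{e,L,\omega}\prod_i \epsilon(\omega(i)).
\]
The second factor is a nonzero scalar, since all of \(\nu\), \(t\) and \(\epsilon\) take values in \(\kkm\). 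Hence each invariant is obtained from the other by multiplying by \(\rtinv{L,\omega}{\zeta}^{\pm1}\). A single oracle call plus this computation gives a polynomial-time Turing reduction in each direction.

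The main step is to show that \(\rtinv{L,\omega}{\zeta}\) is computable in polynomial time in \(n(L)+c(L)\). I would argue this exactly as in \cref{sec:zest link algorithm proof}.
\begin{enumerate}
  \item Convert the diagram to a braid closure \(\hat B\) using \cref{lem:vogel}. Vogel's moves are isotopies preserving components, so the \(A\)-coloring carries over unchanged.
  \item Compute a shadow coloring with base color \(e\) via \cref{prop: induced shadow coloring}.
  \item Evaluate the local formula \(\prod_j \epsilon(a_j)\prod_c \phi_{i_c}(a_c,b_c)^{\sigma(c)}\).
  \item Multiply in the extra product of \(\epsilon(\omega(i))\) over the components. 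This costs only \(n(L)\) additional sign multiplications.
\end{enumerate}
All values lie in a fixed number field \(K\), precomputed from \(\zeta\), so the arithmetic is polynomial by the same appeal to \cite{Lenstra1992}. The same arithmetic handles the final scalar multiplication or division against the oracle's answer.

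The only step I expect to need care is the framing. The manifold invariant depends on the framing of \(L\), and the diagram is assumed blackboard framed, so the framing must survive the passage to \(\hat B\). Vogel's algorithm uses only Reidemeister II moves, which preserve writhe and hence blackboard framing. Any residual discrepancy could be corrected by inserting kinks, each contributing a factor \(t(a)\epsilon(a)\) by \cref{eq: zested twist}, at linear cost. A secondary point is that the signature prefactors in \cref{eq:rt-manifold-definition} play no role here. The factorization is stated at the level of the normalized invariants, and \(\Delta_\pm\) and \(\globaldim\) depend only on \(\mathcal{C}_e\), which zesting leaves unchanged.
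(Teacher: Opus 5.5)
Your proposal is correct and follows the paper's argument. The factorization $\mathcal{Z}_{\mathcal{C}^{\zeta}}(M,\omega)=\mathcal{Z}_{\mathcal{C}}(M,\omega)\,\mathcal{Z}_{\zeta}(L,\omega)$ of \cref{thm:manifold invariants factor}(b), combined with the polynomial-time evaluation of $\mathcal{J}_{\zeta}$ from \cref{thm: efficient algorithm}, gives a reduction in each direction using one oracle call; your added checks are exactly the details the paper leaves implicit when it calls the corollary obvious: that $\mathcal{Z}_{\zeta}(L,\omega)\in\kkm$, so the reverse direction is a division; that Vogel's Reidemeister II moves preserve the blackboard framing; and that $\Delta_{\pm}$ and $\globaldim$ are unchanged because zesting leaves $\mathcal{C}_e$ untouched.
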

\qedhere

For more general $3$-manifold invariants without $A$-structure, it is straightforward to compute the Reshetikhin-Turaev invariant $\mathcal{Z}_{\mathcal{C}^\zeta}(M)$ from $\mathcal{Z}_{\mathcal{C}}(M)$ and the zesting data $\zeta$ using the methods established throughout this paper. 
\begin{remark}
\label{rmk: zested 3-mfld invariant}
Because \(\surgcol{} = \sum_{a \in A} \surgcol{a}\) it is clear that
\begin{equation*}
  \rtinv{M}{\mathcal{C}} = \sum_{\omega \in \homol[1]{M; A}} \rtinv{M, \omega}{\mathcal{C}}
\end{equation*}
so that
\begin{equation}
  \rtinv{M}{\mathcal{C}^{\zeta}} = \sum_{\omega \in \homol[1]{M; A}} \rtinv{M, \omega}{\mathcal{C}} \rtinv{M, \omega}{\zeta}.
\end{equation}
Because of the summation over inhomogeneous colorings understanding the relative complexity is more intricate than for simply-colored links and $A$-manifolds.
\end{remark}

\printbibliography

\appendix

\section{New formulation of ribbon zesting data}
\label{sec:ribbon zesting conditions}
In this appendix we explain why our definition of ribbon zesting is equivalent to the original one; in the process we show braided zestings always admit twist zestings and recover a characterization of pseudo-unitary zestings due to \citeauthor{Galindo2023} \cite[Theorem 3.18]{Galindo2023}.

\begin{theorem}
  \label{thm:twist zesting conditions}
  Fix braided zesting data \(\zeta = (\lambda, \nu, t)\) for a ribbon fusion category \(\mathcal{C}\) and let \(f, \epsilon : A \to \kkm\) be functions with \(f(a) = t(a) \epsilon(a)\) for all \(a \in A\).
  (Recall \(t(a) \defeq \langle t(a,a) \rangle\).)
  Then \(f\) satisfies 
  \begin{equation}
    \label{eq:twist zesting f}
    f(ab) \omega(a,b; ab) \theta_{\lambda(a,b)}= f(a)f(b)t^{(2)}(a,b)
  \end{equation}
  for all \(a, b \in A\) if and only if \(\epsilon\) satisfies \cref{eq:twist zesting epsilon}
  \begin{equation*}
    \tag{\ref{eq:twist zesting epsilon}}
    \frac{
      \epsilon(ab)
    }{
      \epsilon(a) \epsilon(b)
    }
    =
    \dim \lambda(a,b)
  \end{equation*}
  for all \(a, b \in A\).
  When this is the case,
  \begin{equation}
    \label{eq:ribbon zesting f}
    f(a)=f(a^{-1})\omega(a, a^{-1}; a^{-1}) \theta_{\lambda(a,a^{-1})}
  \end{equation}
  for all \(a \in A\) if and only if \(\epsilon\) satisfies \cref{eq:ribbon zesting epsilon}
  \begin{equation*}
    \tag{\ref{eq:ribbon zesting epsilon}}
    \epsilon(a)^2 = 1
  \end{equation*}
  for all \(a \in A\).
\end{theorem}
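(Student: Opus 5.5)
The plan is to substitute \(f(a) = t(a)\epsilon(a)\) into \cref{eq:twist zesting f} and \cref{eq:ribbon zesting f} and divide out every factor that depends only on the braided data \((\lambda,\nu,t)\). After this division each \(f\)-condition should become the corresponding \(\epsilon\)-condition. The whole argument therefore rests on one identity among the braided zesting data, which I would isolate first:
\begin{equation*}
  t(ab)\,\omega(a,b;ab)\,\theta_{\lambda(a,b)}\,\dim(\lambda(a,b)) = t(a)\,t(b)\,t^{(2)}(a,b)
  \qquad \text{for all } a,b \in A.
\end{equation*}
Granting this identity, \cref{eq:twist zesting f} becomes
\[
\epsilon(ab)\,t(a)t(b)t^{(2)}(a,b)/\dim\lambda(a,b) = \epsilon(a)\epsilon(b)\,t(a)t(b)t^{(2)}(a,b).
\]
Since every factor is invertible and \(\dim\lambda(a,b)=\pm1\), this is exactly \cref{eq:twist zesting epsilon}, and the argument runs in both directions.

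To prove the identity I would work inside the braided category \(\mathcal{C}^{\zeta}\), or more concretely with the scalar forms of the braided zesting conditions in \cref{eqs: zesting equations}.
\begin{itemize}
  \item For an invertible object \(\eta\), the twist is determined by the self-braiding: \(\theta_\eta = \dim(\eta)\,\langle \beta_{\eta,\eta}\rangle\). This converts \(\theta_{\lambda(a,b)}\dim\lambda(a,b)\) into a braiding scalar on \(\lambda(a,b)\).
  \item I would then specialize \cref{eq:braided zesting I} and \cref{eq:braided zesting II} to the triples \((a,b,ab)\), \((a,b,a)\) and \((a,b,b)\), taking \(c\) equal to one of \(a\), \(b\), \(ab\) so that diagonal terms \(t(x,x)\) appear.
  \item I would multiply the resulting scalar equations so that the \(\nu\)'s cancel, in the same way the \(\nu\)'s cancelled in \cref{ex: whitehead numerical}.
  \item The \(\beta^{-1}_{\lambda,\lambda}\) factor that the associativity condition \cref{eq:associative zesting} introduces when two of the slots coincide should supply the \(\theta_{\lambda(a,b)}\) term.
  \item The character \(\omega(a,b;ab)\) should come directly from \cref{eq:braided zesting II}.
\end{itemize}
Here \(t^{(2)}(a,b)\) is read as the double-braiding scalar \(t(a,b)t(b,a)\) on \(\lambda(a,b)\).

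An alternative, more conceptual route would also work. The balancing axiom \(\theta_{X\otimes Y} = \beta\beta(\theta_X\otimes\theta_Y)\) in \(\mathcal{C}\) and in \(\mathcal{C}^{\zeta}\), applied to simple \(X_a, Y_b\) and to a simple summand of \(X_a\otimes Y_b\), is exactly what \cref{eq:twist zesting f} encodes (this is how it arose in \cite{Delaney2020}). Comparing it with the trace formulas \cref{eq: left zested trace,eq: right zested trace} accounts for the \(\dim\lambda\) factor.

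For the second claim, I would set \(b=a^{-1}\) in \cref{eq:twist zesting epsilon}. Using \(\epsilon(e)=1\), this gives \(\epsilon(a)\epsilon(a^{-1}) = \dim(\lambda(a))^{-1} = \dim\lambda(a)\). I would also set \(b=a^{-1}\) in the displayed identity, using \(t(e)=1\), \(\lambda(a)=\lambda(a^{-1})\) and the symmetry \(t^{(2)}(a,a^{-1})=t^{(2)}(a^{-1},a)\). Substituting \(f=t\epsilon\) into \cref{eq:ribbon zesting f} then reduces it to \(\epsilon(a)=\epsilon(a^{-1})\,\dim\lambda(a)\), after a further \(t\)-identity relating \(t(a)/t(a^{-1})\) to \(\omega(a,a^{-1};a^{-1})\,\theta_{\lambda(a)}\,\dim\lambda(a)\). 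That further identity comes from the case \(c=a^{-1}\) of the same manipulations. Multiplying by \(\epsilon(a)\) and using \(\epsilon(a)\epsilon(a^{-1})=\dim\lambda(a)\) gives \(\epsilon(a)^2 = \dim\lambda(a)^2 = 1\), and every step reverses.

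The main obstacle is the bookkeeping in the first identity. The exact combination of specializations of \cref{eq:braided zesting I,eq:braided zesting II,eq:associative zesting} that makes every \(\nu\) cancel and leaves precisely \(\theta_{\lambda(a,b)}\dim\lambda(a,b)\) is not obvious. I would first try the triple \((a,b,ab)\) together with the normalizations, and fall back on the balancing-axiom argument if that becomes unmanageable.
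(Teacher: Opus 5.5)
Your treatment of the twist condition is the paper's proof. Your displayed identity is the first equation of \cref{lem: computational} multiplied through by $\theta_{\lambda(a,b)}\dim\lambda(a,b)$. The paper obtains it by essentially the manipulations you list: the braided zesting conditions at $(ab,a,b)$, $(a,b,a)$ and $(a,b,b)$, then associativity at $(a,b,a,b)$ to produce $\beta^{-1}_{\lambda(a,b),\lambda(a,b)}$, then a trace. The ribbon half also follows the paper's outline, but the auxiliary identity you rely on is false, so that step fails.

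Carry out your ``case $c=a^{-1}$'' explicitly. The two braided zesting conditions at $(a,a^{-1},a)$ give $\nu(a,a^{-1},a)=t(a)\,t(a,a^{-1})$ and $\omega(a,a^{-1};a)=t(a)\,t(a^{-1},a)\,\nu(a,a^{-1},a)$. Hence $\omega(a,a^{-1};a)=t(a)^2t^{(2)}(a,a^{-1})$. The same computation at $(a^{-1},a,a^{-1})$ gives $\omega(a,a^{-1};a^{-1})=t(a^{-1})^2t^{(2)}(a,a^{-1})$. Combine these with $\theta_{\lambda(a)}\dim\lambda(a)=t(a)t(a^{-1})t^{(2)}(a,a^{-1})$ (your identity at $b=a^{-1}$) and with $\omega(a,a^{-1};a)\,\omega(a,a^{-1};a^{-1})=1$. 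This yields
\[
  \frac{t(a)}{t(a^{-1})}=\omega(a,a^{-1};a)\,\theta_{\lambda(a)}\dim\lambda(a),
  \qquad
  \omega(a,a^{-1};a^{-1})\,\theta_{\lambda(a)}\dim\lambda(a)=\frac{t(a^{-1})}{t(a)}.
\]
So your identity holds only when $\omega(a,a^{-1};a)^2=1$, and that is not automatic. Take $A=\mathbb{Z}/3$ and $\lambda(a,b)=J^{\mathrm{carry}(a,b)}$, where $J$ is a boson with $\chi_J(1)=\zeta$ a primitive cube root of unity (as in $SU(3)_3$). Then $\nu(a,b,c)=\zeta^{2a\,\mathrm{carry}(b,c)}$ and $t(a,b)=\eta^{ab}$ with $\eta^3=\zeta^2$, for representatives $a,b,c\in\{0,1,2\}$, solve the zesting equations, while $\omega(1,2;1)=\zeta$.

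With the correct identity, substituting $f=t\epsilon$ into \eqref{eq:ribbon zesting f} as literally stated gives $\epsilon(a)^2=\omega(a,a^{-1};a)^{-2}$, not $\epsilon(a)^2=1$. The paper's proof contains the same slip: it writes the condition with $\omega(a,a^{-1};a^{-1})$, but its final computation uses $\omega(a,a^{-1};a)$, substituted via \eqref{eq:braided zesting consequence ii}. What both arguments actually establish is that $f(a)=f(a^{-1})\,\omega(a,a^{-1};a)\,\theta_{\lambda(a)}$ is equivalent to $\epsilon(a)^2=1$. For that corrected condition your reduction to $\epsilon(a)=\epsilon(a^{-1})\dim\lambda(a)$, and then to $\epsilon(a)^2=1$, goes through once you use the first displayed identity in place of yours.
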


The theorem follows from a computational lemma given at the end of this section.

In the original definition \cite{Delaney2020,Delaney2021} of zesting a twist zesting is a function \(f\) satisfying \cref{eq:twist zesting f} and a ribbon zesting is a twist zesting additionally satisfying \cref{eq:ribbon zesting f}, so this theorem establishes that our definition in \cref{sec:zesting summary} is equivalent.
The new, simpler characterization has some immediate applications:

\begin{corollary}
  \label{thm:ribbon condition reformulation}
  Let \(\mathcal{C}\) be a braided fusion category with a twist and \(\zeta = (\lambda, \nu, t)\) a braided zesting of \(\mathcal{C}\).
  Define a cocycle \(\delta \in \cohomol{A; \kkm}\) by \(\delta(a,b) = \dim(\lambda(a,b))\).
  \begin{thmenum}
    \item \(\zeta\) always extends to a twist zesting and such extensions are a torsor over \(\hom(A, \kkm) = \widehat{A}\).
    \item If \(\mathcal{C}\) is ribbon, \(\zeta\) extends to a ribbon zesting if and only if \(\delta\) is trivial in \(\cohomol{A, \mathbb{Z}/2\mathbb{Z}}\) and such extensions are a torsor over \(\hom(A, \mathbb{Z}/2\mathbb{Z}) = \widehat{A / 2 A}\).
    \item 
      If \(\mathcal{C}\) is pseudo-unitary then \(\mathcal{C}^{\zeta}\) is pseudo-unitary if and only if \(\epsilon = 1\) is the trivial cochain.
      \qedhere
  \end{thmenum}
\end{corollary}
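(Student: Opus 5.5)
The plan is to derive all three parts from \cref{thm:twist zesting conditions}, which turns the conditions on \(f\) into the simpler conditions \eqref{eq:twist zesting epsilon} and \eqref{eq:ribbon zesting epsilon} on \(\epsilon\). Once this translation is made, each part is a statement about cochains on \(A\).

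\emph{Part (a).} By \cref{thm:twist zesting conditions}, twist zestings extending \(\zeta\) correspond to functions \(\epsilon\) with \(\delta = d\epsilon\), where \(\delta(a,b)=\dim\lambda(a,b)\in\{\pm1\}\subset\kkm\). I first check that \(\delta\) is a \(2\)-cocycle. This holds because \(\dim\) is multiplicative on invertible objects and \(\lambda\) is a \(2\)-cocycle at the level of objects: \(\lambda(a,b)\otimes\lambda(ab,c)\cong\lambda(b,c)\otimes\lambda(a,bc)\). Existence then needs \([\delta]=0\) in \(\cohomol{A;\kkm}\). The key observation is that \(\delta\) is \emph{symmetric}, since \(\lambda(a,b)\cong\lambda(b,a)\) via \(t\). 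For a finite abelian group and divisible coefficients \(\kkm\) (here we use that \(\kk\) is algebraically closed), symmetric \(2\)-cocycles correspond to abelian extensions, i.e.\ to \(\operatorname{Ext}(A,\kkm)\), and this group vanishes by divisibility. Hence \(\delta=d\epsilon\) for some \(\epsilon\), which we may normalize so that \(\epsilon(e)=1\). Any two solutions differ by a function \(\epsilon'/\epsilon\) with trivial coboundary, i.e.\ by a homomorphism \(A\to\kkm\), which gives the torsor structure over \(\widehat A\).

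\emph{Part (b).} Now we additionally need \(\epsilon(a)^2=1\), i.e.\ \(\epsilon\) takes values in \(\{\pm1\}\cong\ZZ/2\ZZ\). Since \(\delta\) is \(\{\pm1\}\)-valued, such an \(\epsilon\) exists exactly when \(\delta\) is a coboundary of a \(\ZZ/2\ZZ\)-valued cochain, i.e.\ \([\delta]=0\) in \(\cohomol{A;\ZZ/2\ZZ}\). The ambiguity is a homomorphism \(A\to\{\pm1\}\), i.e.\ an element of \(\hom(A,\ZZ/2\ZZ)=\widehat{A/2A}\).

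\emph{Part (c).} If \(\mathcal{C}\) is pseudo-unitary, then all invertible objects have \(\dim=1\). In particular \(\dim\lambda(a)=1\) and \(\delta\equiv1\), so \(\epsilon\) is a character into \(\{\pm1\}\). By \eqref{eq: right zested trace}, the zested quantum dimension of a simple \(X_a\) is \(\epsilon(a)\dim(X_a)\), as also recorded in \cref{thm:qdims square to one}. Frobenius–Perron dimensions are unchanged by zesting, since the fusion rules are twisted only by invertible objects of dimension \(1\). Since pseudo-unitarity of \(\mathcal{C}\) gives \(\dim(X_a)=\operatorname{FPdim}(X_a)>0\), the zested category \(\mathcal{C}^{\zeta}\) is pseudo-unitary iff \(\epsilon(a)=1\) for every \(a\in A\), using faithfulness of the grading.

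The main obstacle is the cohomological vanishing step in (a): making precise that a symmetric \(\kkm\)-valued \(2\)-cocycle on a finite abelian group is a coboundary. I would argue this via the abelian extension it defines, which splits because \(\kkm\) is divisible. A secondary point needing care in (c) is justifying that \(\operatorname{FPdim}\) is preserved under zesting.
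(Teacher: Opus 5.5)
Your proposal is correct and follows essentially the same route as the paper. You use \cref{thm:twist zesting conditions} to reduce (a) to \(\delta\) being the coboundary of \(\epsilon\), and kill symmetric \(\kkm\)-valued \(2\)-cocycles by the divisibility/Ext-vanishing argument the paper gives in its footnote. You read off the torsors from ratios of solutions, and for (c) you use \(\dim_{\mathcal{C}^{\zeta}}(X)=\epsilon(|X|)\dim_{\mathcal{C}}(X)\) with \(\delta\equiv 1\). Your added remark that Frobenius--Perron dimensions are unchanged by zesting (the twisting objects \(\lambda(a,b)\) have \(\operatorname{FPdim}\) equal to \(1\)) fills in a step the paper leaves implicit.
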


The torsor characterizations in parts (a) and (b) were previously known \cite[Proposition 5.1(iii)]{Delaney2020}, as was part (c) \cite[Theorem 3.18]{Galindo2023}.
Our existence results are new.

\begin{proof}
  (a)
  \Cref{eq:twist zesting epsilon} is simply the statement that \(\delta\) is the coboundary of \(\epsilon\).
  Because \(\kkm\) is divisible any symmetric \(2\)-cocycle with values in \(\kkm\) is a coboundary.%
  \note{
    Thanks to C\'esar Galindo for the following argument:
    Divisible abelian groups are injective (as \(\mathbb{Z}\)-modules) so if \(A\) is divisible every extension \(0 \to A \to B \to C\) is split.
    Because symmetric \(2\)-cocycles parametrize such extensions we conclude \(0 = \operatorname{Ext}_{\mathbb{Z}}(A,B) = \operatorname{H}_{\text{sym}}^{2}(B,A)\).
  }
  Clearly the ratio of two solutions to \cref{eq:twist zesting epsilon} is a homomorphism \(A \to \kkm\).

  (b)
  \Cref{eq:ribbon zesting epsilon} shows that the only additional requirement for a ribbon zesting is that \(\epsilon\) take values in \(\mathbb{Z}/2\mathbb{Z} \iso \set{1, -1}\).
  
  (c)
  If \(X\) is simple, it is homogeneous, so
  \[
    \dim_{\mathcal{C}^{\zeta}}(X) = \epsilon(|X|) \dim_{\mathcal{C}}(X)
  \]
  and along with \(\epsilon(a) \in \set{1, -1}\) this shows \(\epsilon = 1\) is the only choice preserving positive dimensions.
  If \(\mathcal{C}\) is pseudo-unitary then \(\delta\) is trivial so it is the coboundary of \(\epsilon = 1\) as required.
\end{proof}

\begin{lemma}
\label{lem: computational}
  For any braided zesting and any \(a, b \in A\),
  \begin{align}
    \label{eq:braided zesting consequence}
    \omega(a,b;ab)
    t(ab)
    &=
    t(a)
    t(b)
    t^{(2)}(a,b)
    \theta_{\lambda(a,b)}^{-1}
    \dim(\lambda(a,b))
    \\
    \label{eq:braided zesting consequence ii}
    \omega(a,a^{-1};a)
    &=
    t(a^{-1})^{-2} t^{(2)}(a, a^{-1})^{-1}.
  \end{align}
\end{lemma}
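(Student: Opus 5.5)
The plan is to derive both identities by specializing the braided zesting conditions \eqref{eq:braided zesting I} and \eqref{eq:braided zesting II} to repeated arguments and then passing to scalars. Throughout I read \(t^{(2)}(a,b)\) as the scalar \(\langle t(b,a)\circ t(a,b)\rangle\). All objects \(\lambda(\cdot,\cdot)\) are invertible, so every endomorphism in sight is a scalar multiple of an identity. The only subtlety is that when we compare morphisms between \emph{different} factorizations of the same invertible object, we must use the associativity isomorphisms \(\nu\), and a crossing \(\beta^{\pm1}_{\lambda,\lambda'}\) may appear.

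For the self-braiding of an invertible object \(X\) I will use the standard fact
\[
\beta_{X,X}=\theta_X\dim(X)^{-1}\id_{X\otimes X},
\]
which follows from taking the trace of \(\theta_X\) expressed through the Drinfeld element, together with \(\dim(X)=\pm1\). This is exactly how the combination \(\theta_{\lambda(a,b)}^{-1}\dim(\lambda(a,b))\) will enter.

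For \eqref{eq:braided zesting consequence}, I first apply \eqref{eq:braided zesting II} with \(c=ab\). This expresses
\[
\omega(a,b;ab)\,t(ab,ab)
\]
conjugated by \(\nu(ab,a,b)\) and \(\nu(a,b,ab)\), in terms of \(t(a,ab)\), \(\nu(a,ab,b)^{-1}\) and \(t(b,ab)\). I then apply \eqref{eq:braided zesting I} twice to expand \(t(a,ab)\) and \(t(b,ab)\): the first produces \(t(a,a)\) and \(t(a,b)\) conjugated by \(\nu\)'s, the second produces \(t(b,a)\) and \(t(b,b)\). Multiplying out, the scalar contributions \(\langle\nu\rangle^{\pm1}\) should all cancel, since each \(\nu(x,y,z)\) appears once with each sign after relabelling via the symmetry \(\lambda(x,y)=\lambda(y,x)\). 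What survives is
\[
\omega(a,b;ab)\,t(ab)=t(a)\,t(b)\,t^{(2)}(a,b)
\]
times one leftover braiding. That braiding arises because, to compare the two sides, two copies of \(\lambda(a,b)\) must be reordered (the source and target factorizations of \(\lambda(a,b)\otimes\lambda(ab,ab)\) differ by such a swap). Replacing \(\beta^{-1}_{\lambda(a,b),\lambda(a,b)}\) by \(\theta^{-1}_{\lambda(a,b)}\dim\lambda(a,b)\) then gives \eqref{eq:braided zesting consequence}.

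For \eqref{eq:braided zesting consequence ii}, I apply \eqref{eq:braided zesting II} with \((a,b,c)=(a,a^{-1},a)\) and use the normalizations \(t(e,a)=\id\) and \(\nu(\cdot,e,\cdot)=\id\). After cancelling \(\nu\)-scalars, the left-hand side reduces to \(\omega(a,a^{-1};a)\) times \(\nu\)-terms, and the right-hand side to \(t(a,a)\,t(a^{-1},a)\) times the matching \(\nu\)-terms. This gives an expression for \(\omega(a,a^{-1};a)\) in terms of \(t(a)\) and \(t(a^{-1},a)\). To convert it to the stated form in \(t(a^{-1})\) and \(t^{(2)}(a,a^{-1})\), I will use \eqref{eq:braided zesting I} with \((a,a^{-1},a)\), which relates \(t(a,e)=1\) to \(t(a,a)\) and \(t(a,a^{-1})\). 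As a cross-check, \eqref{eq:braided zesting consequence} with \(b=a^{-1}\) (where \(t(e)=1\) and \(\omega(\cdot,\cdot;e)=1\)) relates \(t(a)\,t(a^{-1})\,t^{(2)}(a,a^{-1})\) to \(\theta_{\lambda(a)}\dim\lambda(a)\); together these eliminate \(t(a)\).

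The main obstacle is the bookkeeping in the first identity. I have to track exactly which \(\nu\)'s cancel and confirm that precisely one self-crossing of \(\lambda(a,b)\) remains, with the correct sign of exponent, rather than a crossing between different \(\lambda\)'s, which would instead contribute an \(\omega\) factor. I would carry this out diagrammatically, drawing the composite of the three conditions in the style of \eqref{fig:braided zesting conditions}, rather than with raw scalars, so that no braiding gets lost.
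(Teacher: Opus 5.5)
Your expansion for \eqref{eq:braided zesting consequence} (apply \eqref{eq:braided zesting II} at \(c=ab\), then \eqref{eq:braided zesting I} at \((a,a,b)\) and \((b,a,b)\)) is the paper's expansion run in the opposite order. It correctly produces \(t(a)t(b)t^{(2)}(a,b)\), and it even spares you the identity \(\omega(a,b;a)\omega(a,b;b)=\omega(a,b;ab)\) that the paper needs.

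\textbf{The gap.} The gap is in what you do with the leftover \(\nu\)'s. Symmetry of \(\lambda\) alone gives no relation between \(\nu\)'s with permuted arguments. After your three substitutions you are left with
\[
\omega(a,b;ab)\,t(ab)=\frac{\nu(ab,a,b)\,\nu(a,b,ab)}{\nu(a,ab,b)\,\nu(a,b,a)\,\nu(b,a,b)}\;t(a)\,t(b)\,t^{(2)}(a,b),
\]
and nothing here cancels in general. The crossing also does not come from reordering source and target factorizations: \(\id_{\lambda(a,b)}\otimes t(ab,ab)\) has the same source and target, so no swap is forced. Apart from \(\omega\), which you have already used, the braided zesting conditions involve only \(\nu\)'s and \(t\)'s, never a crossing of \(\lambda(a,b)\) with itself. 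The only axiom that produces such a crossing is the associative zesting condition \eqref{eq:associative zesting}, and your proposal never invokes it. At \((a,b,c,d)=(a,b,a,b)\) it reads \(\nu(b,a,b)\,\nu(a,ab,b)\,\nu(a,b,a)=\nu(a,b,ab)\,\beta^{-1}_{\lambda(a,b),\lambda(a,b)}\,\nu(ab,a,b)\). Hence the ratio above is exactly \(\beta_{\lambda(a,b),\lambda(a,b)}\). This is the paper's final step before taking traces. Your concern about the sign of the exponent then disappears. Since \(\lambda(a,b)\) is invertible and lies in \(\mathcal{C}_e\), we have \(\beta_{\lambda(a,b),\lambda(a,b)}^2=\chi_{\lambda(a,b)}(e)=1\). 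So \(\beta=\beta^{-1}\) here and \(\theta_{\lambda(a,b)}=\pm1\).

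\textbf{The second identity.} For \eqref{eq:braided zesting consequence ii} your ingredients are right, but the bookkeeping is off. Applying \eqref{eq:braided zesting II} at \((a,a^{-1},a)\) leaves \(\omega(a,a^{-1};a)\,\nu(a,a^{-1},a)^{-1}=t(a)\,t(a^{-1},a)\), with an unmatched \(\nu\). Then \eqref{eq:braided zesting I} at \((a,a^{-1},a)\) removes it via \(\nu(a,a^{-1},a)=t(a)\,t(a,a^{-1})\), giving \(\omega(a,a^{-1};a)=t(a)^2t^{(2)}(a,a^{-1})\). Eliminating \(t(a)\) through \eqref{eq:braided zesting consequence} at \(b=a^{-1}\) works, but it silently needs \(\theta_{\lambda(a)}^2=1\), which holds for the reason just given. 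It also makes this part depend on the repaired first identity. The paper avoids both by substituting \(a\mapsto a^{-1}\) and using \(\omega(a^{-1},a;a^{-1})=\omega(a,a^{-1};a)^{-1}\). That identity follows from \(\lambda(a^{-1},a)=\lambda(a,a^{-1})\) and the fact that \(\chi_{\lambda(a)}\) is a character.
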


\begin{proof}
  Consider
  \begin{equation}
   \label{eq:braided zesting consequence LHS}
  \scalebox{.9}{$\omega(a,b;ab)$ \begin{tikzpicture}[line width=1,baseline=1.9cm]
    \draw (-1,0) node[below] {\small $\lambda(a,b)$}--(-1,4) node[above] {\small $\lambda(a,b)$};
    \draw (0,0) node[below] {\small $\lambda(a,b)$}--(0,4) node[above] {\small $\lambda(a,b)$};
    \draw (1,0) node[below] {\small $\phantom{aba}\lambda(ab,ab)$}--(1,4) node[above] {\small $\phantom{aba}\lambda(ab,ab)$};
    \begin{scope}[xshift=1cm, yshift=1.33cm]
    \draw[fill=white] (0,.65) circle (.55) node [] {\scriptsize $t(ab,ab)$};
    \end{scope}
    \end{tikzpicture}}.
    \end{equation}
  We can re-write the braided zesting conditions \eqref{fig:braided zesting conditions} as
  \begin{center}
     \scalebox{.9}{\begin{tikzpicture}[line width=1,baseline=1.9cm]
    \draw (0,0) node[below] {\small $\lambda(b,c)$}--(0,4) node[above] {\small $\lambda(b,c)$};
    \draw (1,0) node[below] {\small $\lambda(bc,a)$}--(1,4) node[above] {\small $\lambda(a,bc)$};
    \begin{scope}[xshift=1cm, yshift=1.33cm]
    \draw[fill=white] (0,.65) circle (.5) node [] {\scriptsize $t(a,bc)$};
    \end{scope}
    \end{tikzpicture}
    \quad = \quad
    \begin{tikzpicture}[line width=1,baseline=1.9cm]
     \draw (0,-1.5) node[below] {\small $\lambda(b,c)$}--(0,5.5) node[above] {\small $\lambda(b,c)$};
    \draw (1,-1.5) node[below] {\small $\lambda(b,ac)$}--(1,5.5) node[above] {\small $\lambda(a,bc)$};
    \begin{scope}[yshift=-1.33cm]
\draw[fill=white] (-.25, .415) rectangle node [] {\scriptsize $\nu(b,c,a)^{-1}$}(1.25,.915);
    \end{scope}
    \draw[fill=white] (0,.65) circle (.45) node [] {\scriptsize $t(a,c)$};
    \begin{scope}[yshift=2.66cm]
    \draw[fill=white] (0,.65) circle (.45) node [] {\scriptsize $t(a,b)$};
    \end{scope}
    \begin{scope}[yshift=1.33cm]
    \draw[fill=white] (-.25, .415) rectangle node [] {\scriptsize $\nu(b,a,c)$}(1.25,.915);
    \end{scope}
       \begin{scope}[yshift=3.99cm]
    \draw[fill=white] (-.25, .415) rectangle node [] {\scriptsize $\nu(a,b,c)^{-1}$}(1.25,.915);
    \end{scope}
    \end{tikzpicture}}
     and 
     \scalebox{.9}{\begin{tikzpicture}[line width=1,baseline=1.9cm]
    \draw (0,0) node[below] {\small $\lambda(a,b)$}--(0,4) node[above] {\small $\lambda(a,b)$};
    \draw (1,0) node[below] {\small $\lambda(c,ab)$}--(1,4) node[above] {\small $\lambda(ab,c)$};
    \begin{scope}[xshift=1cm, yshift=1.33cm]
    \draw[fill=white] (0,.65) circle (.5) node [] {\scriptsize $t(ab,c)$};
    \end{scope}
    \end{tikzpicture}
    \quad = $\omega(a,b;c)^{-1}$ \quad
    \begin{tikzpicture}[line width=1,baseline=1.9cm]
     \draw (0,-1.5) node[below] {\small $\lambda(b,c)$}--(0,5.5) node[above] {\small $\lambda(ab,c)$};
     \draw (1,-1.5) node[below] {\small $\lambda(a,b)$}--(1,5.5) node[above] {\small $\phantom{b}\lambda(c,ab)$};
   \begin{scope}[yshift=-1.33cm]
\draw[fill=white] (-.25, .415) rectangle node [] {\scriptsize $\nu(c,a,b)$}(1.25,.915);
    \end{scope}
    \draw[fill=white] (0,.65) circle (.45) node [] {\scriptsize $t(a,c)$};
    \begin{scope}[yshift=2.66cm]
    \draw[fill=white] (0,.65) circle (.45) node [] {\scriptsize $t(b,c)$};
    \end{scope}
    \begin{scope}[yshift=1.33cm]
    \draw[fill=white] (-.25, .415) rectangle node [] {\scriptsize $\nu(a,c,b)^{-1}$}(1.25,.915);
    \end{scope}
       \begin{scope}[yshift=3.99cm]
    \draw[fill=white] (-.25, .415) rectangle node [] {\scriptsize $\nu(a,b,c)$}(1.25,.915);
    \end{scope}
    \end{tikzpicture}}
  \end{center}
  We can then expand \eqref{eq:braided zesting consequence LHS} using the left-hand relation as
  \begin{center}
    \scalebox{.9}{$\omega(a,b;ab)$ \begin{tikzpicture}[line width=1,baseline=1.9cm]
    \draw (-1,0) node[below] {\small $\lambda(a,b)$}--(-1,4) node[above] {\small $\lambda(a,b)$};
    \draw (0,0) node[below] {\small $\lambda(a,b)$}--(0,4) node[above] {\small $\lambda(a,b)$};
    \draw (1,0) node[below] {\small $\phantom{aba}\lambda(ab,ab)$}--(1,4) node[above] {\small $\phantom{aba}\lambda(ab,ab)$};
    \begin{scope}[xshift=1cm, yshift=1.33cm]
    \draw[fill=white] (0,.65) circle (.55) node [] {\scriptsize $t(ab,ab)$};
    \end{scope}
    \end{tikzpicture}}
    =
    \scalebox{.9}{\begin{tikzpicture}[line width=1,baseline=1.9cm]
    \draw (-1,-1.5) node[below] {\small $\lambda(a,b)$}--(-1,5.5) node[above] {\small $\lambda(a,b)$};
     \draw (0,-1.5) node[below] {\small $\lambda(a,b)$}--(0,5.5) node[above] {\small $\lambda(a,b)$};
    \draw (1,-1.5) node[below] {\small $\phantom{aba}\lambda(ab,ab)$}--(1,5.5) node[above] {\small $\phantom{aba}\lambda(ab,ab)$};
    \begin{scope}[yshift=-1.33cm]
\draw[fill=white] (-.30, .415) rectangle node [] {\scriptsize $\nu(ab,a,b)^{-1}$}(1.30,.915);
    \end{scope}
    \draw[fill=white] (0,.65) circle (.5) node [] {\scriptsize $t(ab,b)$};
    \begin{scope}[yshift=2.66cm]
    \draw[fill=white] (0,.65) circle (.5) node [] {\scriptsize $t(ab,a)$};
    \end{scope}
    \begin{scope}[yshift=1.33cm]
    \draw[fill=white] (-.25, .415) rectangle node [] {\scriptsize $\nu(a,ab,b)$}(1.25,.915);
    \end{scope}
       \begin{scope}[yshift=3.99cm]
    \draw[fill=white] (-.30, .415) rectangle node [] {\scriptsize $\nu(a,b,ab)^{-1}$}(1.30,.915);
    \end{scope}
    \end{tikzpicture}}
  \end{center}
  Applying the right-hand relation to \(t(ab, a)\) and \(t(ab, b)\), pulling out scalar factors, and applying some cancellations shows that our quantity is equal to
  \begin{center}
    $\langle t(a,a) \rangle \langle t(b,b) \rangle \langle t^{(2)}(a,b) \rangle$ 
     \scalebox{.9}{\begin{tikzpicture}[line width=1,baseline=1.9cm]
    \draw (-1,-1.5) node[below] {\small $\lambda(a,b)$}--(-1,5.5) node[above] {\small $\lambda(a,b)$};
     \draw (0,-1.5) node[below] {\small $\lambda(a,b)$}--(0,5.5) node[above] {\small $\lambda(a,b)$};
    \draw (1,-1.5) node[below] {\small $\phantom{aba}\lambda(ab,ab)$}--(1,5.5) node[above] {\small $\phantom{aba}\lambda(ab,ab)$};
    \begin{scope}[yshift=-1.33cm]
\draw[fill=white] (-.30, .415) rectangle node [] {\scriptsize $\nu(a,b,ab)^{-1}$}(1.30,.915);
    \end{scope}
    \draw[fill=white] (-1.30, .415) rectangle node [] {\scriptsize $\nu(b,a,b)$}(.30,.915);
    \begin{scope}[yshift=2.66cm]
     \draw[fill=white] (-1.30, .415) rectangle node [] {\scriptsize $\nu(a,b,a)$}(.30,.915);
    \end{scope}
    \begin{scope}[yshift=1.33cm]
    \draw[fill=white] (-.25, .415) rectangle node [] {\scriptsize $\nu(a,ab,b)$}(1.25,.915);
    \end{scope}
       \begin{scope}[yshift=3.99cm]
    \draw[fill=white] (-.30, .415) rectangle node [] {\scriptsize $\nu(ab,a,b)^{-1}$}(1.30,.915);
    \end{scope}
    \end{tikzpicture}}
  \end{center}
  Finally applying the associative zesting condition
  gives
  \begin{center}
    \scalebox{.9}{$\omega(a,b;ab)$ \begin{tikzpicture}[line width=1,baseline=1.9cm]
    \draw (-1,0) node[below] {\small $\lambda(a,b)$}--(-1,4) node[above] {\small $\lambda(a,b)$};
    \draw (0,0) node[below] {\small $\lambda(a,b)$}--(0,4) node[above] {\small $\lambda(a,b)$};
    \draw (1,0) node[below] {\small $\phantom{aba}\lambda(ab,ab)$}--(1,4) node[above] {\small $\phantom{aba}\lambda(ab,ab)$};
    \begin{scope}[xshift=1cm, yshift=1.33cm]
    \draw[fill=white] (0,.65) circle (.55) node [] {\scriptsize $t(ab,ab)$};
    \end{scope}
    \end{tikzpicture}}
    =
     $\langle t(a,a) \rangle \langle t(b,b) \rangle \langle t^{(2)}(a,b) \rangle$ 
     \scalebox{.9}{\begin{tikzpicture}[line width=1,baseline=1.9cm]
    \draw (-1,-1.5) node[below] {\small $\lambda(a,b)$} -- (-1,0);
    \draw (-1, 4)--(-1,5.5) node[above] {\small $\lambda(a,b)$}; 
     \draw (0,-1.5) node[below] {\small $\lambda(a,b)$}-- (0,0);
     \draw (0, 4) --(0,5.5) node[above] {\small $\lambda(a,b)$};
    \draw (1,-1.5) node[below] {\small $\phantom{aba}\lambda(ab,ab)$}--(1,5.5) node[above] {\small $\phantom{aba}\lambda(ab,ab)$};
    \begin{scope}[yshift=-1.33cm]
\draw[fill=white] (-.30, .415) rectangle node [] {\scriptsize $\nu(a,b,ab)^{-1}$}(1.30,.915);
    \end{scope}
       \begin{scope}[yshift=3.99cm]
    \draw[fill=white] (-.30, .415) rectangle node [] {\scriptsize $\nu(ab,a,b)^{-1}$}(1.30,.915);
    \end{scope}
    \begin{scope}[xshift=-1cm]
        \foreach \x in {0,1}
    {
    \draw(\x,0)--(\x,1);
    \draw(\x,2.5)--(\x,4);
    }
   \draw(2,0)--(2,4);
   \begin{scope}[xshift=0cm,yshift=1cm]
   \draw (1,0)  \br (0,1.5);
   \draw[white, line width=10] (0,0) \br (1,1.5);
   \draw (0,0) \br (1,1.5);
   \end{scope}
   \draw[fill=white] (.75,.25) rectangle node {\scriptsize $\nu(a,b,ab)$} (2.25,.75);
   \draw[fill=white] (.75,2.75) rectangle node {\scriptsize $\nu(ab,a,b)$} (2.25,3.25);
    \end{scope}
    \end{tikzpicture}}
  \end{center}
  Taking traces (in \(\mathcal{C}\)) gives
  \[
    \omega(a,b;ab)
    t(ab)
    \dim (\lambda(ab, ab))
    =
    t(a)
    t(b)
    t^{(2)}(a,b)
    \dim (\lambda(a,b)) \dim (\lambda(ab, ab))
    \theta_{\lambda(a,b)}^{-1}
  \]
  and since \(\dim \lambda(a,b)^{2} = 1\) we obtain \cref{eq:braided zesting consequence}.

  For \cref{eq:braided zesting consequence ii} setting \(b = a^{-1}, c = a\) in the first braided zesting condition \eqref{eq:braided zesting I} gives
  \[
    \nu(a, a^{-1}, a) = t(a) t(a, a^{-1})
  \]
  and in the second \eqref{eq:braided zesting II} gives
  \[
    \nu(a, a^{-1}, a)^{-1} \omega(a, a^{-1}; a) t(a)^{-1} = t(a^{-1}, a).
  \]
  Composing gives the identity
  \[
    \omega(a,a^{-1}; a) =
    t(a)^2 t^{(2)}(a,a^{-1})
  \]
  for all \(a \in A\).
  Replacing \(a\) with \(a^{-1}\) gives
  \[
    \omega(a^{-1},a; a^{-1}) =
    t(a^{-1})^2 t^{(2)}(a^{-1},a)
  \]
  and then since \(t^{(2)}(a^{-1},a) = t^{(2)}(a,a^{-1})\) and \(\omega(a^{-1}, a; a^{-1}) = \omega(a, a^{-1}; a^{-1}) = \omega(a, a^{-1}; a)^{-1}\) we conclude
  \[
    \omega(a,a^{-1};a) =
    t(a^{-1})^{-2} t^{(2)}(a, a^{-1})^{-1}.
    \qedhere
  \]
\end{proof}

\begin{proof}[Proof of \cref{thm:twist zesting conditions}]
  Substituting \(f = \epsilon t\) into \cref{eq:twist zesting f} and applying \eqref{eq:braided zesting consequence} shows \eqref{eq:twist zesting f} is equivalent to
  \[
    \epsilon(ab)
    \dim(\lambda(a,b))
    =
    \epsilon(a)
    \epsilon(b)
  \]
  and as \(\dim \lambda(a,b) \in \set{1, -1}\) this is equivalent to \cref{eq:twist zesting epsilon}.

  Clearly \eqref{eq:ribbon zesting f} is equivalent to
  \[
    1 
    =
    \frac{
      f(a^{-1}) \omega(a, a^{-1}; a^{-1}) \theta_{\lambda(a, a^{-1})}
    }{
      f(a).
    }
  \]
  Setting \(b = a^{-1}\) in \cref{eq:braided zesting consequence} gives
  \[
    \theta_{\lambda(a, a^{-1})}
    =
    \epsilon(a) \epsilon(a^{-1}) t(a) t(a^{-1}) t^{(2)}(a, a^{-1}).
  \]
  Therefore
  \begin{align*}
    \frac{
      f(a^{-1}) \omega(a, a^{-1}; a) \theta_{\lambda(a, a^{-1})}
    }{
      f(a)
    }
    &=
    \frac{
      t(a^{-1}) \epsilon(a^{-1}) 
      \epsilon(a) \epsilon(a^{-1}) t(a) t(a^{-1}) t^{(2)}(a, a^{-1})
    }{
      t(a) \epsilon(a) t(a^{-1})^{2} t^{(2)}(a, a^{-1})
    }
    \\
    &=
    \epsilon(a^{-1})^2
  \end{align*}
  which is equal to \(1\) for all \(a \in A\) if and only if  \(\epsilon(a)^2\) is.
\end{proof}

\section{Proof that zesting defines a shadow rack 2-cocycle}
\label{sec:proof of cocycle condition}
First we prove a relation satisfied by the associative zesting data $\nu$.
\begin{lemma}
  \label{thm:assoc-consequence}
  Under our conventions the \(3\)-cochain \(\nu\) of any zesting satisfies
  \begin{equation}
    \label{eq:assoc-consequence}
    \frac{
      \nu(a_0 a_1, a_2, a_3)
    }{
      \nu(a_0 a_1, a_3, a_2)
    }
    =
    \frac{
      \nu(a_0, a_1 a_2, a_3)
      \nu(a_0, a_1, a_2)
      \nu(a_1, a_2, a_3)
    }{
      \nu(a_0, a_1 a_3, a_2)
      \nu(a_0, a_1, a_3)
      \nu(a_1, a_3, a_2)
    }
  \end{equation}
  for all \(a_i \in A\).
\end{lemma}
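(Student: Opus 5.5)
The plan is to read \cref{eq:assoc-consequence} off the scalar form of the associative zesting condition, the first line of \eqref{eqs: zesting equations}. We apply it twice, the second time with two arguments swapped, and divide. Throughout we use the conventions of \cref{sec:numerical}: \(\nu\) is a \(\kkm\)-valued \(3\)-cochain, \(\lambda\) is a symmetric \(2\)-cocycle on the level of objects, and the braiding among the \(\lambda\)'s is the scalar bicharacter \(\beta\) of the pointed subcategory \(\mathcal{C}_e \cap \mathcal{C}_{pt}\).

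First I will specialize the associative condition to \((a,b,c,d) = (a_0,a_1,a_2,a_3)\) and solve for the term with first argument \(a_0a_1\):
\[
  \nu(a_0a_1,a_2,a_3)
  =
  \frac{\nu(a_1,a_2,a_3)\,\nu(a_0,a_1a_2,a_3)\,\nu(a_0,a_1,a_2)}{\nu(a_0,a_1,a_2a_3)\,\beta^{-1}_{\lambda(a_0,a_1),\lambda(a_2,a_3)}}.
\]
Second, I will do the same with \((a,b,c,d) = (a_0,a_1,a_3,a_2)\). This gives the analogous expression for \(\nu(a_0a_1,a_3,a_2)\): the numerator becomes \(\nu(a_1,a_3,a_2)\,\nu(a_0,a_1a_3,a_2)\,\nu(a_0,a_1,a_3)\), and the denominator becomes \(\nu(a_0,a_1,a_3a_2)\,\beta^{-1}_{\lambda(a_0,a_1),\lambda(a_3,a_2)}\).

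Taking the quotient of the two displays, the numerators produce exactly the right-hand side of \cref{eq:assoc-consequence}. It then remains to see that the denominators agree.

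This last check is the only real point. Since \(A\) is abelian, \(a_2a_3 = a_3a_2\), so \(\nu(a_0,a_1,a_2a_3) = \nu(a_0,a_1,a_3a_2)\) on the nose. For the braiding factors, symmetry of \(\lambda\) on objects gives \(\lambda(a_2,a_3) = \lambda(a_3,a_2)\). The scalar \(\beta\) is a bicharacter depending only on the (isomorphism classes of the) objects, so
\[
  \beta_{\lambda(a_0,a_1),\lambda(a_2,a_3)} = \beta_{\lambda(a_0,a_1),\lambda(a_3,a_2)}.
\]
The main thing to be careful about is that this identification, and the identification of \(\nu(a,b,c)\) with a scalar, is exactly the convention fixed in \cref{sec:numerical}. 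This is presumably what ``under our conventions'' refers to, since without strictifying \(\lambda(a_2,a_3)\cong\lambda(a_3,a_2)\) one would instead pick up conjugation by \(t(a_2,a_3)\). With that convention in place the denominators cancel and \cref{eq:assoc-consequence} follows.
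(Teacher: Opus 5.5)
Your proposal is correct and is essentially the paper's proof: apply the scalar associative zesting condition at $(a_0,a_1,a_2,a_3)$ and at $(a_0,a_1,a_3,a_2)$, then divide. The leftover factors cancel because $a_2a_3=a_3a_2$ and because $\lambda(a_2,a_3)$ and $\lambda(a_3,a_2)$ were chosen to be equal, so the two $\beta^{-1}$ factors coincide.
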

\begin{proof}
  The associative zesting condition says that
  \[
    \nu(a_0 a_1, a_2, a_3)
    \nu(a_0, a_1, a_2 a_3)
    \beta^{-1}_{\lambda(a_0,a_1), \lambda(a_2,a_3)}
    =
    \nu(a_0, a_1, a_2)
    \nu(a_0, a_1 a_2, a_3)
    \nu(a_1, a_2 , a_3)
  \]
  or, swapping \(a_2\) and \(a_3\),
  \[
    \nu(a_0 a_1, a_3, a_2)
    \nu(a_0, a_1, a_3 a_2)
    \beta^{-1}_{\lambda(a_0,a_1), \lambda(a_3,a_2)}
    =
    \nu(a_0, a_1, a_3)
    \nu(a_0, a_1 a_3, a_2)
    \nu(a_1, a_3 , a_2).
  \]
  But we have chosen \(\lambda(a_2, a_3)\) and \(\lambda(a_3, a_2)\) to be equal (not just isomorphic) so
  \[
    \beta^{-1}_{\lambda(a_0,a_1), \lambda(a_2,a_3)}
    =
    \beta^{-1}_{\lambda(a_0,a_1), \lambda(a_3,a_2)}
  \]
  as well.
  Hence
  \[
    \frac{
      \nu(a_0, a_1, a_2)
      \nu(a_0, a_1 a_2, a_3)
      \nu(a_1, a_2 , a_3)
    }{
      \nu(a_0 a_1, a_2, a_3)
      \nu(a_0, a_1, a_2 a_3)
    }
    =
    \frac{
      \nu(a_0, a_1, a_3)
      \nu(a_0, a_1 a_3, a_2)
      \nu(a_1, a_3 , a_2)
    }{
      \nu(a_0 a_1, a_3, a_2)
      \nu(a_0, a_1, a_3 a_2)
    }
  \]
  and rearranging (and using the commutativity of \(A\)) gives \cref{eq:assoc-consequence}.
\end{proof}

\begin{proof}[Proof of \cref{thm: zest invariant is rack cocycle invariant}]%
The shadow rack 2-cocycle condition holds if
\[
  \phi_i(b,c) \phi_{ib}(a,c)\phi_i(a,b)=\phi_{ic}(a,b)\phi_i(a,c)\phi_{ia}(b,c).
\]
Applying the definition of $\phi$, the left hand side becomes
\[
  \frac{t(b,c) \nu(i,b,c)t(a,c) \nu(ib,a,c) t(a,b) \nu(i,a,b)}{\nu(i,c,b)\nu(ib, c,a)\nu(i,b,a)}
\]
and the right hand side becomes
\[
  \frac{t(a,b) \nu(ic,a,b)t(a,c) \nu(i,a,c) t(b,c) \nu(ia,b,c)}{\nu(ic,b,a)\nu(i, c,a)\nu(ia,c,b)}.
\]
The same factors of \(t\) appear on both sides, so it suffices to show that
\[
  \frac{\nu(i,b,c)\nu(ib,a,c) \nu(i,a,b)}{\nu(i,c,b)\nu(ib, c,a)\nu(i,b,a)}=\frac{ \nu(ic,a,b)\nu(i,a,c) \nu(ia,b,c)}{\nu(ic,b,a)\nu(i, c,a)\nu(ia,c,b)}.
\]
Rearranging to put all terms with a product in the first argument on the lefthand side, we want to show that 
\[
  \frac{\nu(ib,a,c)}{\nu(ib, c,a)}\frac{\nu(ic,b,a)}{\nu(ic,a,b)}\frac{\nu(ia,c,b)}{\nu(ia,b,c)}=\frac{\nu(i,b,a) \nu(i,c,b)\nu(i,a,c) }{ 
  \nu(i,a,b)\nu(i,b,c)\nu(i, c,a)}.
\]

After applying \cref{thm:assoc-consequence}--which relies only the associative zesting condition \cref{eq:associative zesting}--to each of the three ratios on the left hand side, the equation reduces to 
\[
  \frac{\nu(b,a,c)\nu(c,b,a)\nu(a,c,b)}{\nu(b,c,a)\nu(a,b,c)\nu(c,a,b)}= 1.
\]
By rearranging the braided zesting condition \cref{eq:braided zesting I}, we have
\[
  \frac{\nu(b,c,a)\nu(a,b,c)}{\nu(b,a,c)} = \frac{t(a,b)t(a,c)}{t(a,bc)}
  \qquad \text{ and } \qquad
  \frac{\nu(a,c,b)\nu(c,b,a)}{\nu(c,a,b)} = \frac{t(a,c)t(a,b)}{t(a,bc)},
\]
and substituting these in gives the desired result.
\end{proof}

\end{document}